\newcommand{\diff}[2]{\mbox{{\rm Diff}{${\,}_{#1}({\mathbb C}^{#2},0)$}}}
\newcommand{\diffh}[2]{\mbox{$\widehat{\rm Diff}{{\,}_{#1}({\mathbb C}^{#2},0)}$}}
\newcommand{\cn}[1]{\mbox{(${\mathbb C}^{#1},0$)}}
\newtheorem{pro}{Proposition}[section]
\newtheorem{teo}{Theorem}[section]
\newtheorem{cor}{Corollary}[section]
\newtheorem{lem}{Lemma}[section]
\theoremstyle{definition}
\newtheorem{defi}{Definition}[section]
\theoremstyle{remark}
\newtheorem{rem}{Remark}[section]
\begin{document}

\title[]
{Derived length of solvable groups of local diffeomorphisms}

\author{Mitchael Martelo and Javier Rib\'{o}n}
\address{Instituto de Matem\'{a}tica, UFF, Rua M\'{a}rio Santos Braga S/N
Valonguinho, Niter\'{o}i, Rio de Janeiro, Brasil 24020-140}
\thanks{e-mail address: mitchaelmartelo@id.uff.br, javier@mat.uff.br}
\thanks{MSC-class. Primary: 37F75, 20F16; Secondary: 20F14, 32H50}
\thanks{Keywords: local diffeomorphism, derived length, solvable group, nilpotent group}
\maketitle

\bibliographystyle{plain}
\section*{Abstract}
Let $G$ be a solvable subgroup of the group $\diff{}{n}$
of local complex analytic diffeomorphisms.
Analogously as for groups of matrices we bound the solvable
length of $G$ by a function of $n$.
Moreover we provide the best possible bounds for connected, unipotent and
nilpotent groups.
\section{Introduction}
Let $K$ be a field. The soluble length (or equivalently derived length) of a
solvable matrix subgroup of $GL(n,K)$ is bounded by a function of the dimension $n$.
The existence of such bound was observed by Malcev \cite{Malcev}
and Zassenhaus.
The value for the best possible upper bound was provided by
Newman \cite{Newman}. The examples of groups of maximal length can
be obtained for instance for $K=  {\mathbb C}$.

Let ${\rm Diff} ({\mathbb C}^{n},0)$ be the group of germs of complex analytic
diffeomorphisms defined in
a neighborhood of $0 \in {\mathbb C}^{n}$.
We are interested on studying these classical topics in the context of
subgroups of
$\diff{}{n}$ and more generally for subgroups of the formal completion
$\diffh{}{n}$ of  $\diff{}{n}$.

Groups of diffeomorphisms behave  differently than matrix groups with respect to
algebraic properties.
It is known that for an algebraically closed field $K$ of characteristic $0$
the nilpotent class of a nilpotent subgroup of $GL(n,K)$ is not bounded
by a function of $n$ (prop. 8.3, page 103 \cite{Wehrfritz}).
Anyway if $G$ is a connected or a unipotent subgroup of $GL(n,K)$
then such nilpotent class is bounded by $n-1$ ($n >1$) as a consequence of
Lie-Kolchin's theorem. Consider the unipotent subgroup
\[ G = \left\{ \left(
\frac{x + P_{k}(y)}{(1+ty)^{k}} , \frac{y}{1+ty}
\right) : P_{k} \in {\mathbb C}[y] \cap (y^{2}), \ \deg P_{k} \leq k  \right\} \]
of $\diff{}{2}$. The nilpotent class of $G$ is $k-1$.
It is easy to see that the $jth$ group
${\mathcal C}^{j} G$ of the descending central series of $G$
(see def. \ref{def:com}) for $1 \leq j \leq k-2$ is non trivial and
 it is composed of elements of the form
 $\phi(x,y)= (x +Q_{j}(y,z), y)$ where
 $Q_{j} \in  {\mathbb C}[y]$ is a polynomial in $y$
 of the form $a_{j+2} y^{j+2} + \hdots + a_{k} y^{k}$.
 Thus there is no hope of bounding the nilpotent class of unipotent groups
 in terms of the  dimension. In spite of this there exists a bound
 for the derived length of solvable groups.
\begin{teo}
\label{teo:mainc2}
Let $G \subset \diffh{}{n}$ be a solvable group.
Then the soluble length of  $G$ is at most $2n -1 + \rho(n)$ where
$\rho : {\mathbb N} \to {\mathbb N}$ is the Newman function \cite{Newman}.
\end{teo}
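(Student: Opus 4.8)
The plan is to split off the linear part of the diffeomorphisms from the part tangent to the identity. Let $D : \diffh{}{n} \to GL(n,\mathbb{C})$ be the homomorphism sending a formal diffeomorphism to its linear part, let $N = G \cap \ker D$ be the normal subgroup of elements of $G$ tangent to the identity, and write $d(\cdot)$ for the derived length. The image $D(G)$ is a solvable subgroup of $GL(n,\mathbb{C})$, so $d(D(G)) \le \rho(n)$ by Newman's theorem. For any short exact sequence $1 \to N \to G \to Q \to 1$ one has $d(G) \le d(N) + d(Q)$, since the quotient map carries $G^{(d(Q))}$ onto $Q^{(d(Q))} = \{1\}$, forcing $G^{(d(Q))} \subseteq N$ and hence $G^{(d(Q)+d(N))} \subseteq N^{(d(N))} = \{1\}$. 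With $Q = D(G)$ this reduces everything to the claim that every solvable subgroup of $\diffh{1}{n}$, the group of formal diffeomorphisms tangent to the identity, has derived length at most $2n-1$; the summand $\rho(n)$ is then added for free.

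To treat the tangent-to-the-identity part I would pass to the associated Lie algebra. The group $\diffh{1}{n}$ acts unipotently on every finite jet space $\mathbb{C}[[x]]/(x)^{k}$, so it is pro-unipotent, and over a field of characteristic zero the exponential map is a bijection onto the Lie algebra $\hat{\mathcal{X}}_{1}(\mathbb{C}^{n},0)$ of formal vector fields with vanishing linear part. Replacing a solvable subgroup $N$ by its pro-algebraic closure $\overline{N}$ only increases the derived length, and $\overline{N}$ corresponds to a solvable Lie subalgebra $\mathcal{H} \subseteq \hat{\mathcal{X}}_{1}(\mathbb{C}^{n},0)$ with $d(N) \le d(\overline{N}) = d(\mathcal{H})$, the Baker--Campbell--Hausdorff formula matching the derived series of the group with that of the Lie algebra in the pro-unipotent setting. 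The problem thus becomes purely Lie-theoretic: bound the derived length of a solvable Lie algebra of formal vector fields in $n$ variables by $2n-1$.

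For this I would induct on $n$, the guiding quantity being the generic rank $r = \dim_{\mathcal{M}}(\mathcal{M}\otimes \mathcal{H}) \le n$ of $\mathcal{H}$ inside $\mathrm{Der}(\mathcal{M})$, where $\mathcal{M} = \mathbb{C}((x_{1},\dots,x_{n}))$ is the field of formal meromorphic functions; it is precisely the transcendence degree $n$ of $\mathcal{M}$ that forces finiteness. In the base case $n=1$ one uses that commuting formal vector fields on the line are proportional (so abelian subalgebras are at most one-dimensional) together with an order estimate on iterated brackets to conclude that a solvable subalgebra must be abelian, whence $d \le 1 = 2\cdot 1 - 1$. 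For the inductive step I would run a Lie--Kolchin type argument on the solvable action: the $\mathcal{M}$-span of $\mathcal{H}$ is an involutive distribution that integrates to an $\mathcal{H}$-invariant fibration, and the solvability yields a common semi-invariant (a first integral, or an eigen-function for the induced action on each jet). Restricting to a generic leaf and quotienting by the invariant function lowers the number of variables by one, and the assertion is that this costs at most two terms of the derived series --- one for the action along the fibres and one for the transverse action --- giving $d_{n} \le d_{n-1} + 2$ and hence $d_{n} \le 2n-1$.

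The hard part is exactly this bookkeeping of ``two derived-series levels per dimension.'' One must verify that after passing to the derived algebra twice the generic rank strictly drops, so that the induction actually engages, while controlling the fact that the reduced vector fields need no longer vanish at the origin; consequently the inductive hypothesis must be formulated for \emph{all} solvable subalgebras of $\hat{\mathcal{X}}(\mathbb{C}^{n-1},0)$, not merely those tangent to the identity, and the case of a straightenable (non-vanishing) field handled separately. Showing that the abelian quotient at each level genuinely conceals no further non-commutativity --- i.e. that the cost is $2$ and not larger --- is where the solvability hypothesis and the rigidity of the one-dimensional bracket obstruction inside each fibre are used decisively.
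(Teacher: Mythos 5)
Your opening reduction is exactly the paper's proof of this theorem: since $j^{1}G$ is a solvable subgroup of $GL(n,{\mathbb C})$, Newman's bound forces $G^{(\rho(n))}\subset\ker j^{1}$, and the statement follows once one knows that every solvable tangent-to-the-identity (more generally unipotent) subgroup of \diffh{}{n} has derived length at most $2n-1$ (theorem \ref{teo:main}). The passage from such a group to a solvable Lie subalgebra of $\hat{\mathcal X}_{N}\cn{n}$ via the pro-algebraic closure is also the paper's route (propositions \ref{pro:lie} and \ref{pro:lieder}). The problem is that the Lie-algebra estimate, which carries essentially all of the weight, is only sketched, and the inductive scheme you propose cannot produce the constant $2n-1$. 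You want to find a common first integral, restrict to a generic leaf, and run $d_{n}\leq d_{n-1}+2$ with base case $d_{1}=1$. But, as you yourself observe, the reduced vector fields in $n-1$ variables are no longer nilpotent, so the inductive hypothesis has to be the bound for \emph{arbitrary} solvable subalgebras of $\hat{\mathcal X}\cn{n-1}$, namely $2(n-1)$; the recursion then yields only $2n$ for the unipotent case and hence only $2n+\rho(n)$ for the theorem. The saving of one unit cannot live in the base case of such an induction.

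In the paper the $-1$ comes from the first step, not the last. Setting $\kappa(p)=\dim_{\hat K_{n}}(\overline{\mathfrak g}^{(p)}\otimes_{\mathbb C}\hat K_{n})$ (your ``generic rank''), two separate facts are proved: (i) $\kappa(p+2)<\kappa(p)$ whenever $\kappa(p)>0$ (proposition \ref{pro:intd}), and (ii) for a solvable algebra of \emph{nilpotent} formal vector fields with $\kappa(0)=n$ one already has $\kappa(1)<n$ (proposition \ref{pro:maxd}), because a nilpotent formal vector field cannot have a nonzero eigenvalue on a nonzero element of $\hat K_{n}$ (lemma \ref{lem:eigen}, proved by passing to lowest-order homogeneous parts). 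Your sketch has no analogue of (ii), and (i) --- which you correctly identify as ``the hard part'' --- is asserted rather than proved. Note also that the paper does not prove (i) by restricting to leaves: it associates to each $Z\in\overline{\mathfrak g}^{(p-2)}$ the matrix $M_{Z}=(X_{j}(a_{k}))$ with entries in the field of first integrals ${\mathcal M}_{p}$, checks $M_{[Z,W]}=[M_{Z},M_{W}]$, triangularizes over $\overline{\mathcal M}_{p}$ by Lie's theorem, and reads the rank drop off the strict upper-triangularity of the second derived algebra; this sidesteps the formal Frobenius and leaf-restriction issues your geometric version would have to confront. So the invariant you chose is the right one and the overall architecture matches the paper, but the two decisive steps are missing and the induction as formulated proves a strictly weaker bound.
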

The solubility of groups of diffeomorphism is a topic related to the integrability
of complex analytic foliations.
Suppose that ${\mathcal L}$ is a leaf of a complex analytic foliation
${\mathcal F}$ of codimension $q$. We choose a point $p \in {\mathcal L}$ and
a local transversal $T$ to the foliation through the point $p$.
We can associate to any $\gamma \in \pi_{1}({\mathcal L},p)$ a holonomy mapping
defined in the neighborhood of $p$ in $T$. By identifying a neighborhood of $p$
in $T$ with a neighborhood of the origin in ${\mathbb C}^{p}$ we obtain a homomorphism
of groups
\[  \pi_{1}({\mathcal L},p) \to  \diff{}{q}  \]
whose image is the holonomy group of the leaf ${\mathcal L}$.
Roughly speaking in the codimension $1$ case the existence of a Liouvillian
first integral is equivalent to the solubility of holonomy groups.
The first result in this direction is probably the Mattei-Moussu's
topological characterization for the existence of holomorphic first integrals
of germs of complex analytic codimension $1$ foliations  \cite{MaMo:Aen}
(namely the leaves
are closed and there exists a finite number of leaves adhering to the origin)
whose proof is based on taking advantage of the finitude of the
holonomy groups that appear naturally in the desingularization process.
To the interested reader
we recommend E. Paul's paper  \cite{Paul:pre} where
the existence and
nature of Liouvillian first integrals is characterized in terms of the properties of
the projective holonomy groups. In this context it is interesting to study the properties
of solvable groups of local diffeomorphisms as an attempt to understand the
integrability properties of higher codimension foliations.

The analysis of solvable groups of local diffeomorphisms is a valuable tool to
describe solvable groups of real analytic difffeomorphisms of compact manifolds.
Ghys uses this strategy to prove that solvable groups of real analytic diffeomorphisms
of ${\mathbb S}^{1}$ and nilpotent groups of real analytic diffeomorphisms of ${\mathbb S}^{2}$
are metabelian, i.e. their first derived groups are commutative \cite{Ghys-identite}.
In the same spirit Cantat and Cerveau study
analytic actions of mapping class groups on surfaces \cite{Cantat-Cerveau}.

It is well known that
the bound in theorem \ref{teo:mainc2} is optimal for $n=1$ since the group
\[ G = \left\{ \frac{\lambda z}{1 + t z} : \lambda \in {\mathbb C}^{*}, \ t \in {\mathbb C}
\right\} \subset \diff{}{} \]
is of soluble length $2$. Indeed the transformation $\psi : G \to Aff ({\mathbb C})$
onto the affine group defined as
$\lambda z / (1 + t z)  \to \lambda z + t$ is an automorphism.
The bound could be non-optimal for $n >1$.
In spite of this we provide optimal upper bounds for the soluble length in
several classes of groups,  namely connected, unipotent and nilpotent groups.

We define $l(G)$ (resp. $l({\mathfrak g})$) the soluble length of a group
(resp. a Lie algebra), see def. \ref{def:dlie} and \ref{def:length}.
\begin{teo}
\label{teo:mainc}
Let $G \subset \diffh{}{n}$ be a solvable group such that $j^{1} G$ is connected.
Then we have $l(G) \leq 2n$. Moreover the length $2n$
is attained for a subgroup $G$ of $\diff{}{n}$.
\end{teo}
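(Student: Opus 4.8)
The plan is to split the derived length of $G$ into a ``semisimple/toral'' contribution and a ``unipotent'' contribution, exploiting that connectedness forces the former to be abelian. Consider the first jet homomorphism $j^{1} : \diffh{}{n} \to GL(n,{\mathbb C})$. Since $j^{1} G$ is a connected solvable subgroup of $GL(n,{\mathbb C})$, the Lie--Kolchin theorem lets me conjugate it into the group of upper triangular matrices; the unipotent (upper unitriangular) elements then form a normal subgroup $U$ of $j^{1}G$, and I set $G_{u} = (j^{1})^{-1}(U) \cap G$, a normal subgroup of $G$. Because commutators of upper triangular matrices are unitriangular, $[G,G] \subseteq G_{u}$, so $G/G_{u}$ is abelian (it embeds in the diagonal torus). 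Hence $l(G) \leq l(G_{u}) + 1$, and the theorem reduces to proving $l(G_{u}) \leq 2n-1$.

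First I would establish the estimate $l(G_{u}) \leq 2n - 1$ for a solvable group $G_{u} \subset \diffh{}{n}$ all of whose elements have unipotent linear part, i.e. a \emph{unipotent} group of local diffeomorphisms. This is the mechanism already underlying the general bound of Theorem \ref{teo:mainc2}: there the linear part contributes the Newman term $\rho(n)$ and the unipotent part contributes $2n-1$, whereas here connectedness has absorbed the whole linear contribution into the single abelian quotient above, leaving only the $2n-1$. To bound $l(G_{u})$ I would filter $\diffh{}{n}$ by order of tangency to the identity, $D_{k} = \{ \phi : \phi(x) = x + O(|x|^{k+1}) \}$, observe that the successive quotients $D_{k}/D_{k+1}$ are abelian groups of homogeneous vector fields on which $G_{u}$ acts, and pass to the Lie algebra of formal infinitesimal generators. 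A solvable unipotent group corresponds to a solvable Lie algebra of formal vector fields on $({\mathbb C}^{n},0)$, whose derived length is controlled by the dimension $n$; transporting this bound back to the group, while controlling the discrepancy between the group's derived series and that of its Lie algebra, should yield $l(G_{u}) \leq 2n - 1$ and hence $l(G) \leq 2n$.

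For optimality I would construct a subgroup $G \subset \diff{}{n}$ with $l(G) = 2n$. For $n=1$ the affine-type group $\{ \lambda z/(1+tz) : \lambda \in {\mathbb C}^{*},\ t \in {\mathbb C} \}$ already has length $2$ and connected $j^{1}G = {\mathbb C}^{*}$. For general $n$ I would build an iterated triangular extension combining, direction by direction, the toral scalings (providing the abelian quotient at the top) with tangent-to-the-identity transformations arranged so that each step of the derived series strictly decreases the tangency level, exhibiting nonzero iterated commutators down to level $2n$; the upper bound just proved then forces $l(G) = 2n$ exactly.

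The main obstacle is the unipotent estimate $l(G_{u}) \leq 2n - 1$: both the precise comparison between the group's derived series and the derived series of the associated Lie algebra of formal vector fields, and the dimension count bounding the derived length of a solvable Lie algebra of formal vector fields in $n$ variables, are the technical heart of the argument. The second delicate point is producing an example that simultaneously realizes the full tangent-part length $2n-1$ and a nontrivial torus action, so that the total length is exactly $2n$ rather than strictly smaller.
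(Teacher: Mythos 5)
Your opening reduction is valid and is genuinely different in shape from the paper's treatment of the connected case: by Lie--Kolchin applied to the Zariski closure of $j^{1}G$ you get $[G,G]\subset G_{u}$, hence $l(G)\le 1+l(G_{u})$ with $G_{u}$ unipotent solvable, and the bound $2n$ would follow from the unipotent bound $2n-1$ of Theorem \ref{teo:main}. This mirrors exactly how the paper deduces Theorem \ref{teo:mainc2} from Theorem \ref{teo:main} (with $\rho(n)$ replaced by $1$), whereas the paper proves the connected case directly at the Lie-algebra level via Proposition \ref{pro:alg}. So the top layer of your argument is fine.

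The genuine gap is that the entire quantitative content --- the bound $l(G_{u})\le 2n-1$, equivalently the bound on the derived length of a solvable Lie algebra of formal nilpotent vector fields --- is asserted rather than proved, and the mechanism you sketch for it would not work. The filtration by tangency order $D_{k}=\{\phi:\phi(x)=x+O(|x|^{k+1})\}$ has abelian successive quotients for \emph{every} $k$, so it carries no information about derived length: the full group $\diffh{u}{n}$ admits this same filtration and is very far from solvable. The actual argument needed is the one in Section \ref{sec:sollen}: one introduces the generic rank $\kappa(p)=\dim_{\hat{K}_{n}}(\overline{\mathfrak g}^{(p)}\otimes_{\mathbb C}\hat{K}_{n})$ of the closed derived algebras, the fields of first integrals ${\mathcal M}_{p}$, and the matrices $M_{Z}$ of equation (\ref{equ:matrix}) recording how the $X_{j}$ act on the coefficients; Lie's theorem over $\overline{\mathcal M}_{p}$ then forces $\kappa(p+2)<\kappa(p)$ (Proposition \ref{pro:intd}), and for nilpotent vector fields the extra drop $\kappa(1)<\kappa(0)$ when $\kappa(0)=n$ comes from the eigenvalue Lemma \ref{lem:eigen}. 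None of this is visible in your proposal, nor is the comparison $l(G)=l(\overline{G}^{(0)})=l({\mathfrak g})$ between the group and its associated Lie algebra (Propositions \ref{pro:lie} and \ref{pro:lieder}), which requires the algebraic-closure construction of Section \ref{sec:lie} and is not automatic. The optimality example is likewise only gestured at; the paper's group $H_{0}$ of Proposition \ref{pro:exa}, built on the Lie algebra ${\mathcal G}_{0}=U_{1}\oplus V_{1}\oplus\cdots\oplus U_{n}\oplus V_{n}$, requires a nontrivial verification (Corollary \ref{cor:res}) that the derived series really survives for $2n$ steps. As it stands your text is a correct reduction plus a plan, not a proof.
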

We associate a Lie algebra to $G$ whose solvability properties are the same.
Of course this approach can be successful only if the group $G$ is somehow
connected. We see that it suffices to require that the subgroup
$j^{1} G$ of $GL(n,{\mathbb C})$ be connected in the usual or the Zariski topologies.
Anyway the proper definition of connectedness allows to generalize the theorem
to solvable groups such that the closure in the Zariski topology of
$j^{1} G$ is connected.

The theorem \ref{teo:mainc} is a consequence of its mirror for Lie algebras.
We denote by
${\mathcal X} \cn{n}$ the set of germs of complex analytic
vector fields which are singular at $0$.
The formal completion of this space is denoted by
$\hat{\mathcal X} \cn{n}$.
\begin{teo}
\label{teo:mainca}
Let ${\mathfrak g} \subset \hat{\mathcal X} \cn{n}$ be a solvable Lie algebra.
Then we have $l({\mathfrak g}) \leq 2n$. Moreover the length $2n$
is attained for a Lie subalgebra ${\mathfrak g}$ of ${\mathcal X} \cn{n}$.
\end{teo}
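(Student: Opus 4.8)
The plan is to reduce everything to a statement about the linear parts and a one–dimensional induction, with the derived length growing by exactly $2$ each time the dimension $n$ drops by $1$. First I would apply Lie's theorem to the image of the $1$--jet homomorphism $j^{1}:\mathfrak{g}\to\mathfrak{gl}(n,\mathbb{C})$. Since $\mathfrak{g}$ is solvable, $j^{1}(\mathfrak g)$ is a solvable subalgebra of $\mathfrak{gl}(n,\mathbb{C})$ and hence, after a (formal) linear change of coordinates, simultaneously upper triangular. This produces the $n$ diagonal characters $\lambda_{1},\dots,\lambda_{n}\colon\mathfrak g\to\mathbb{C}$, each a Lie algebra homomorphism onto the abelian $\mathbb{C}$ (the bracket of upper triangular matrices has vanishing diagonal). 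Their common kernel consists of the elements of $\mathfrak g$ with strictly upper triangular, i.e.\ nilpotent, linear part, and $[\mathfrak g,\mathfrak g]$ lies in this kernel.

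Next I would set up the induction on $n$. The triangular structure provides an $\mathfrak g$--invariant formal hypersurface $S$, which I take to be $\{x_{n}=0\}$ (it is the zero set of a common eigen--covector for the action of $\mathfrak g$ on $\mathfrak m/\mathfrak m^{2}$, constructed order by order). Every $X\in\mathfrak g$ is then tangent to $S$, so restriction to $S$ gives a Lie algebra homomorphism $r\colon\mathfrak g\to\hat{\mathcal X}(\mathbb{C}^{n-1},0)$ whose image is again solvable. By the inductive hypothesis $l(r(\mathfrak g))\le 2(n-1)$, and since derived length is subadditive along extensions of Lie algebras, one gets $l(\mathfrak g)\le l(\ker r)+2(n-1)$. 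The base case $n=1$ is the elementary computation that a solvable subalgebra of $a(x)\partial/\partial x$ has length at most $2$ (illustrated by $\langle x\partial/\partial x,\,x^{2}\partial/\partial x\rangle$, whose first derived algebra is abelian).

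The heart of the matter, and the step I expect to be the main obstacle, is the bound $l(\ker r)\le 2$. Here $\ker r$ is the ideal of fields vanishing on $S$, so each $X\in\ker r$ has the form $X=x_{n}W$. The transverse multiplier $\sigma\colon\ker r\to\mathbb{C}$, sending $x_{n}W$ to the value at $0$ of the $\partial/\partial x_{n}$--coefficient of $W$, is a homomorphism onto the abelian $\mathbb{C}$, whence $[\ker r,\ker r]\subseteq\ker\sigma$. It remains to show that $\ker\sigma$ is abelian, and this is where the genuine work lies: naive bracket computations raise only the $x_{n}$--adic order, so one must combine the $x_{n}$--adic filtration of $\ker r$ with the order filtration of $\hat{\mathcal X}(\mathbb{C}^{n},0)$ and exploit solvability together with the vanishing transverse multiplier to force the relevant brackets to vanish. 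Controlling the interaction of these two filtrations, and checking that the triangular normalization is compatible with the restriction to $S$ at every inductive stage, is the delicate point; everything else is bookkeeping with the subadditivity inequality $l(\mathfrak g)\le l(\ker r)+l(r(\mathfrak g))$.

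Finally, for the optimality clause I would exhibit an explicit Lie subalgebra of $\mathcal X(\mathbb{C}^{n},0)$ of length exactly $2n$, built by iterating the one--dimensional example $\langle x\partial/\partial x,\,x^{2}\partial/\partial x\rangle$ along the coordinate flag, so that the restriction and transverse--multiplier homomorphisms used in the upper bound are each genuinely needed at every level. The lower bound $l\ge 2n$ then amounts to producing a nonzero element of the $(2n-1)$--st derived algebra of this model, which I expect to follow from a direct triangular computation showing that each dimension contributes one full metabelian layer.
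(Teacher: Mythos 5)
Your proposal takes a genuinely different route from the paper: the paper never restricts to an invariant hypersurface, but instead tracks the ranks $\kappa(p)=\dim_{\hat{K}_{n}}(\overline{\mathfrak g}^{(p)}\otimes_{\mathbb C}\hat{K}_{n})$ over the fraction field of ${\mathbb C}[[x_{1},\hdots,x_{n}]]$, associates to each stage of the derived series a solvable Lie algebra of matrices $M_{Z}=(X_{j}(a_{k}))$ with entries in a field of first integrals, and applies Lie's theorem there to prove $\kappa(p+2)<\kappa(p)$, which gives $l({\mathfrak g})\le 2n$ since $\kappa(0)\le n$. Your route has two gaps that I believe are fatal. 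First, the existence of a ${\mathfrak g}$-invariant formal hypersurface $S$ is asserted, not proved: a common eigen-covector of $j^{1}{\mathfrak g}$ controls only the linear part, the order-by-order construction of $S$ meets the usual resonance obstructions in the homological equations, and when $j^{1}{\mathfrak g}=0$ (which you cannot avoid, since after one derived step the diagonal characters are killed and you are left with algebras of nilpotent, possibly zero, linear part) there is no eigen-covector to start from at all.

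Second, and decisively, the claim $l(\ker r)\le 2$ is false. Take any solvable ${\mathfrak h}\subset\hat{\mathcal X}({\mathbb C}^{n-1},0)$ of derived length $2(n-1)$ (such algebras exist by the optimality half of the theorem) and set ${\mathfrak h}'=\{u(x_{n})X:\ X\in{\mathfrak h},\ u\in x_{n}{\mathbb C}[[x_{n}]]\}\subset\hat{\mathcal X}({\mathbb C}^{n},0)$. Since $X(v)=0$ for $X\in{\mathfrak h}$ and $v\in{\mathbb C}[[x_{n}]]$, one has $[uX,vY]=uv[X,Y]$, so ${\mathfrak h}'$ is a solvable Lie algebra with $({\mathfrak h}')^{(k)}\supseteq x_{n}^{2^{k}}{\mathbb C}[[x_{n}]]\,{\mathfrak h}^{(k)}$, hence $l({\mathfrak h}')=l({\mathfrak h})=2(n-1)$; yet every element of ${\mathfrak h}'$ vanishes on $\{x_{n}=0\}$ and has zero $\partial/\partial x_{n}$ component, so ${\mathfrak h}'\subseteq\ker\sigma\subseteq\ker r$. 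Thus $\ker\sigma$ is very far from abelian and the bookkeeping $l({\mathfrak g})\le l(\ker r)+2(n-1)$ with $l(\ker r)\le 2$ cannot close; the step you flagged as ``where the genuine work lies'' is not merely hard, it is false as stated. The subadditivity inequality and the $n=1$ base case are fine, and your optimality sketch is in the same spirit as the paper's example ${\mathcal G}_{0}=\bigoplus_{j}(U_{j}\oplus V_{j})$, but the lower bound there genuinely requires the explicit verification that $x_{n}^{c_{j}}{\mathcal G}_{j}\subset{\mathcal G}_{0}^{(j)}$, which your sketch does not supply.
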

Next we are interested on improving the estimates in theorem \ref{teo:mainc}
for particular classes of groups.
We denote $\diff{u}{n}$ the set of unipotent elements of $\diff{}{n}$, more precisely
$\varphi \in \diff{u}{n}$ if $j^{1} \varphi$ is a unipotent linear isomorphism
(i.e. $j^{1} \varphi - Id$ is nilpotent).
 We denote $\diffh{u}{n}$
the formal completion of $\diff{u}{n}$.
We say that a group $G \subset \diff{}{n}$ is unipotent if
it is composed of unipotent elements.
A unipotent group $G$ can  have a linear part $j^{1} G$ that is not connected
but we can replace $G$ with a sort of algebraic closure $\overline{G}^{(0)}$
such that $G \subset \overline{G}^{(0)}$, $l(G)=l(\overline{G}^{(0)})$ and
$j^ {1} \overline{G}^{(0)}$ is connected. Essentially unipotent groups are
always connected.
\begin{teo}
\label{teo:main}
Let $G \subset \diffh{}{n}$ be a unipotent solvable group.
Then we have $l(G) \leq 2n-1$. Moreover there exists a unipotent solvable group
$G \subset \diff{}{n}$ such that $l(G)=2n-1$.
\end{teo}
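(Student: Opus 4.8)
The plan is to reduce the statement to its infinitesimal (Lie algebra) counterpart and then to extract one extra unit from the bound of Theorem \ref{teo:mainca} by exploiting that the linear parts are nilpotent. First I would invoke the construction preceding the theorem: replacing $G$ by its closure $\overline{G}^{(0)}$ we may assume that $j^{1}G$ is connected while preserving both unipotency and the soluble length, since $l(G)=l(\overline{G}^{(0)})$. With $j^{1}G$ connected we are in the situation of Theorem \ref{teo:mainc}, so we may attach to $G$ a solvable Lie algebra $\mathfrak{g}\subset\hat{\mathcal X}(\mathbb{C}^{n},0)$ with $l(G)=l(\mathfrak{g})$; moreover $\mathfrak{g}$ is \emph{unipotent}, meaning that its linear part $D(\mathfrak{g})=\{\,j^{1}X:X\in\mathfrak{g}\,\}\subset\mathfrak{gl}(n,\mathbb{C})$ consists of nilpotent matrices. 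Everything then reduces to proving $l(\mathfrak{g})\le 2n-1$ for such $\mathfrak{g}$, and afterwards to producing a convergent example attaining the value.

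By Theorem \ref{teo:mainca} we already have $l(\mathfrak{g})\le 2n$, so the entire point is to gain the final unit. I would do this by revisiting the filtration that yields the bound $2n$ and locating the unique place where the general argument spends two units that can be cut to one under nilpotency. Concretely, triangularizing the linear parts by Lie's theorem and building from the resulting flag the chain of ideals $\mathfrak{g}=\mathfrak{g}_{0}\supseteq\mathfrak{g}_{1}\supseteq\cdots\supseteq\mathfrak{g}_{n}=0$ used in the proof of Theorem \ref{teo:mainca}, one gets $l(\mathfrak{g})\le\sum_{k=1}^{n}l(\mathfrak{g}_{k-1}/\mathfrak{g}_{k})$, where each successive quotient is a (parametrized) solvable Lie algebra of vector fields in a single variable $x_{k}$ and therefore contributes at most $2$, reproducing the bound $2n$. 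The generator of the extra unit in each layer is the ``multiplicative'' (semisimple, Euler-type) direction $x_{k}\,\partial/\partial x_{k}$, whose coefficient is precisely the $k$-th diagonal entry of the linear part. Since $D(\mathfrak{g})$ is nilpotent these diagonal entries all vanish; in particular the parameter-free quotient $\mathfrak{g}_{0}/\mathfrak{g}_{1}$ is a solvable subalgebra of $x_{1}^{2}\,\mathbb{C}[[x_{1}]]\,\partial/\partial x_{1}$. The key lemma I would isolate is that a solvable (finite derived length) subalgebra of $x^{2}\mathbb{C}[[x]]\,\partial/\partial x$ is abelian: using the order valuation one checks that a nonzero bracket strictly increases the order, so that the associated graded of a nonabelian such algebra is a nonabelian subalgebra of the positive Witt algebra, whose derived series never terminates. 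Hence that layer contributes $1$ rather than $2$, giving $l(\mathfrak{g})\le 1+2(n-1)=2n-1$.

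The delicate point, and the step I expect to be the main obstacle, is to show that nilpotency forces the saving of \emph{exactly} one unit rather than none or more: one must verify that deleting the semisimple linear direction genuinely lowers the length of the extremal layer and that no compensating increase appears elsewhere in the chain, i.e. that the parametrized one-variable quotients with $k\ge 2$ are still bounded by $2$ even when their order-zero coefficient is a nonunit of $\mathbb{C}[[x_{1},\dots,x_{k-1}]]$. This forces the argument to be married with the internal structure of the proof of Theorem \ref{teo:mainca} rather than applied to it as a black box.

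Finally, for optimality I would exhibit an explicit convergent unipotent group with $l(G)=2n-1$. The natural candidate is a triangular, iterated version of the affine construction responsible for the length $2n$ in Theorem \ref{teo:mainca}, with the single outermost scaling generator deleted so that all linear parts become unipotent; its generators can be taken rational, built from maps of the type $x/(1+tx)$ together with nilpotent shears $x_{i}\mapsto x_{i}+x_{i-1}(\cdots)$, hence convergent. A direct computation of the derived series, showing it is nontrivial through step $2n-2$ and trivial at step $2n-1$, then certifies $l(G)=2n-1$.
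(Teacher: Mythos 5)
Your reduction to a solvable Lie algebra of formal nilpotent vector fields, and your choice of extremal example (the derived group of the connected length-$2n$ example, built from $x_n/(1+tx_n)$ and triangular shears), both match the paper: the example is exactly Corollary \ref{cor:exa}, where $G_0^{(1)}$ satisfies $l(G_0^{(1)})=l(G_0)-1=2n-1$. The gap is in the upper bound. You treat the proof of Theorem \ref{teo:mainca} as if it produced a subnormal chain $\mathfrak g=\mathfrak g_0\supseteq\cdots\supseteq\mathfrak g_n=0$ whose successive quotients are (parametrized) one-variable algebras each contributing at most $2$, and you then argue that unipotency makes the parameter-free quotient a solvable subalgebra of $x^2{\mathbb C}[[x]]\partial/\partial x$, hence abelian. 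No such chain is constructed in the paper, and it is not clear that one exists for an arbitrary solvable subalgebra of $\hat{\mathcal X}\cn{n}$: the paper works directly with the closed derived series, sets $\kappa(p)=\dim_{\hat K_n}\overline{\mathfrak g}^{(p)}\otimes_{\mathbb C}\hat K_n$, and proves $\kappa(p+2)<\kappa(p)$ (Proposition \ref{pro:intd}) by attaching to each level a solvable matrix Lie algebra $\{M_Z\}$ over the first-integral field ${\mathcal M}_p$ and triangularizing it over $\overline{\mathcal M}_p$ by Lie's theorem. The generic rank can stay constant across a derived step and the extensions ${\mathcal M}_{p+1}\supseteq{\mathcal M}_p$ need not be generated by a single coordinate, so the ``one variable per layer'' picture is an unproven structural assumption; you yourself concede that the argument cannot be closed without reopening the proof of Theorem \ref{teo:mainca}, and that reopening is precisely the missing content.

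The paper's actual saving of one unit is Lemma \ref{lem:eigen} together with Proposition \ref{pro:maxd} and Corollary \ref{cor:maxd}: if $\kappa(0)=n$ then ${\mathcal M}_0={\mathbb C}$, the diagonal entries of the triangularized matrices $\tilde M_Z$ are genuine complex eigenvalues of the nilpotent field $Z$ acting on a finite-dimensional complex vector space, and Lemma \ref{lem:eigen} (a field with nilpotent linear part admits only the eigenvalue $0$ on elements of $\hat K_n$) forces them all to vanish, so that already $\kappa(1)<\kappa(0)$ and $l({\mathfrak g})\le 1+2(n-1)=2n-1$; the case $\kappa(0)<n$ is immediate from $\kappa(p+2)<\kappa(p)$. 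Thus the saving occurs at the first derived step via an eigenvalue argument, not via abelianness of a one-variable layer. Your lemma that a solvable subalgebra of $x^2{\mathbb C}[[x]]\partial/\partial x$ is abelian is correct (and is the one-dimensional shadow of Lemma \ref{lem:eigen}), but as written it is not attached to any valid decomposition of a general unipotent solvable ${\mathfrak g}$, so the bound $2n-1$ is not established by your argument.
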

Theorem \ref{teo:mainc2} is a corollary of
theorem \ref{teo:main}.  Let $G \subset \diffh{}{n}$ be a solvable group. Since $j^{1} G$ is a solvable
linear group then $G^{(\rho(n))}$ (see def. \ref{def:com})
is composed of transformations whose linear part
is the identity. Thus the group $G^{(\rho(n))}$ is unipotent.
Since the soluble length of $G^{(\rho(n))}$ is at most $2n-1$ by
theorem  \ref{teo:main} then the soluble length of $G$ is less or
equal than $2n -1 + \rho(n)$.
%

Theorem \ref{teo:mainc} can be much improved
if the group $G$ is nilpotent.
\begin{teo}
\label{teo:mainn}
Let $G \subset \diffh{}{n}$ be a nilpotent group.
Then we have $l(G) \leq n$. Moreover there exists a unipotent nilpotent subgroup
$G$ of $\diff{}{n}$ such that $l(G)=n$.
\end{teo}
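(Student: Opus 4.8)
The plan is to deduce the group statement from its Lie-algebra counterpart and then to prove the latter by induction on the dimension $n$. Exactly as in the passage from Theorem \ref{teo:mainc} to Theorem \ref{teo:mainca}, a nilpotent group $G\subset \diffh{}{n}$ can be replaced, without changing its soluble length, by an essentially connected group to which one associates a Lie algebra $\mathfrak g\subset \hat{\mathcal X}\cn n$ whose solubility properties coincide with those of $G$; since the correspondence respects the descending central series, $\mathfrak g$ is nilpotent. It therefore suffices to prove that every nilpotent Lie algebra $\mathfrak g\subset \hat{\mathcal X}\cn n$ satisfies $l(\mathfrak g)\le n$, and that the bound is attained. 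The base case $n=1$ is the classical fact that a nilpotent Lie algebra of singular formal vector fields on $\cn 1$ is abelian, i.e. $l(\mathfrak g)\le 1$.

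For the inductive step I would first split off an abelian quotient using the linear parts. Since $\mathfrak g$ is nilpotent, $j^{1}\mathfrak g$ is a nilpotent subalgebra of $\mathfrak{gl}(n,\mathbb C)$ and hence simultaneously triangularizable; its diagonal entries define characters $\lambda_1,\dots,\lambda_n:\mathfrak g\to\mathbb C$, that is, Lie homomorphisms into the abelian algebra $\mathbb C$ (the diagonal of a bracket of upper triangular matrices vanishes). Setting $\mathfrak n=\bigcap_{i=1}^{n}\ker\lambda_i$ one obtains an ideal whose elements have strictly upper triangular linear part, while $\mathfrak g/\mathfrak n$ embeds in $\mathbb C^{n}$ and is thus abelian. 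Consequently $[\mathfrak g,\mathfrak g]\subseteq\mathfrak n$ and $l(\mathfrak g)\le 1+l(\mathfrak n)$, so the theorem reduces to showing $l(\mathfrak n)\le n-1$ for a nilpotent Lie algebra $\mathfrak n$ all of whose linear parts are nilpotent.

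The heart of the argument, and the step I expect to be the main obstacle, is the reduction of the dimension for such $\mathfrak n$. The strictly upper triangular linear parts share the eigenvector $e_1$ with eigenvalue $0$ and preserve the flag $\langle e_1\rangle\subset\cdots\subset\langle e_1,\dots,e_n\rangle$, but this flag is only infinitesimally invariant: the higher order terms of the fields need not respect it. The plan is to upgrade it to an honest common formal invariant hypersurface $S\cong\cn{n-1}$ by a normalization in the spirit of the formal invariant-manifold constructions for commuting nilpotent actions, to restrict $\mathfrak n$ to $S$, obtaining a nilpotent Lie algebra $\overline{\mathfrak n}\subset\hat{\mathcal X}\cn{n-1}$, and to control the kernel of the restriction homomorphism $\mathfrak n\to\overline{\mathfrak n}$, which is carried by the fields vanishing on $S$ and should be absorbed without cost. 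The induction hypothesis $l(\overline{\mathfrak n})\le n-1$ then yields $l(\mathfrak n)\le n-1$, hence $l(\mathfrak g)\le n$. It is precisely here that nilpotency, rather than mere solubility, is decisive: it forces the semisimple information into a single abelian quotient and prevents the extra flag direction responsible for the bound $2n$ in the soluble case.

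Finally, for optimality I would exhibit a unipotent nilpotent subgroup $G\subset\diff{}{n}$ with $l(G)=n$ by a nested construction in the coordinates $(x_1,\dots,x_n)$, arranged so that each successive derived group $G^{(j)}$ acts nontrivially on exactly one fewer coordinate than $G^{(j-1)}$; one then checks $G^{(n-1)}\neq\{\mathrm{Id}\}$ and $G^{(n)}=\{\mathrm{Id}\}$ by a direct computation of the derived series, analogous to the rational unipotent family displayed in the introduction but tuned to have maximal derived, rather than central, length.
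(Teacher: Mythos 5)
Your reduction step fails, and it fails against the paper's own extremal example. You set $\mathfrak n=\bigcap_i\ker\lambda_i$ and claim the theorem reduces to showing $l(\mathfrak n)\le n-1$ for a nilpotent Lie algebra all of whose linear parts are nilpotent. But when $j^{1}\mathfrak g$ is already nilpotent (for instance zero) every character $\lambda_i$ vanishes and $\mathfrak n=\mathfrak g$, so you are asserting $l(\mathfrak g)\le n-1$ for every nilpotent Lie algebra of formal vector fields with nilpotent linear parts. The example in the nilpotent subsection of section \ref{sec:exa} is exactly such an algebra --- the generators $Z_1,\dots,Z_n$ all have vanishing first jet, so $j^{1}G=\{Id\}$ --- and it has soluble length exactly $n$. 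The statement you propose to prove by induction is therefore false: splitting off the characters of $j^{1}\mathfrak g$ buys nothing, because the extremal behaviour lives entirely in the tangent-to-the-identity part. Independently of this, the dimension-reduction mechanism you rely on (a common formal invariant hypersurface for the whole Lie algebra, plus the claim that the kernel of the restriction homomorphism is ``absorbed without cost'') is precisely the part you flag as the main obstacle and leave unproved; and for the kernel there is no reason its contribution is free, since the soluble length of an extension is bounded by the \emph{sum}, not the maximum, of the lengths of the ideal and the quotient.

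The paper's mechanism is quite different. It works with $\kappa(p)=\dim_{\hat K_n}\bigl(\overline{\mathfrak g}^{(p)}\otimes_{\mathbb C}\hat K_n\bigr)$ and the fields ${\mathcal M}_p$ of meromorphic first integrals; to each level it attaches a solvable Lie algebra of matrices $M_Z$ over ${\mathcal M}_p$, triangularizes it by Lie's theorem, and shows that for a nilpotent $\mathfrak g$ a nonzero diagonal entry $W_1(\alpha_1)$ would produce iterated brackets $A_{j+1}=[Z,A_j]$ whose $W_1$-coefficient $(-1)^{j}a_1W_1(\alpha_1)^{j}$ never vanishes, contradicting nilpotency. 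Hence $\kappa$ drops at \emph{every} step of the derived series and $l(\mathfrak g)\le\kappa(0)\le n$; nilpotency is exploited there, not on $j^{1}\mathfrak g$. Two further points you gloss over: Theorem \ref{teo:mainn} has no connectedness hypothesis, and the non-connected case is not ``exactly as'' in the solvable setting --- the paper needs the Suprunenko--Ty\v{s}kevi\v{c} decomposition of a nilpotent linear group into commuting semisimple and unipotent subgroups together with the bound $l\le[\log_2 n]+1$ for nilpotent semisimple linear groups; and the optimality example is not a routine tuning of the introductory family, but an explicit commuting system $X_1,\dots,X_n$ twisted by meromorphic first integrals $u_k$, whose verification of $l=n$ is a substantial computation that your sketch does not supply.
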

Let us notice that $G$ is not required to be connected in the previous theorem.
The bound $l(G) \leq 2$ was proved by Ghys in \cite{Ghys-identite} for the case $n=2$.
The first author obtained the property  $l(G) \leq n$
for nilpotent groups of tangent to the identity diffeomorphisms \cite{Martelo}.
Notice that unlike for solvable groups the estimate is not different for
non-connected, connected and unipotent groups.

Let us say a word about the proof of the theorems. The main difficulty of working with
Lie algebras of vector fields (and groups of diffeomorphisms) is that they are not
finite dimensional.  In section \ref{sec:lie} we construct the algebraic closure
$\overline{G}^{(0)}$ of a group $G \subset \diffh{}{}$.
The group $\overline{G}^{(0)}$
is a projective limit of finite dimensional linear algebraic groups.
It has analogous algebraic properties as $G$, in particular their soluble lengths coincide.
As in the finite dimensional theory
we can associate a Lie algebra ${\mathfrak g}$ to its connected component of the identity
$\overline{G}_{0}^{(0)}$ (see def. \ref{def:liegrp}).
We prove that the soluble length of $\overline{G}_{0}^{(0)}$ and ${\mathfrak g}$
are the same. Since we have $\overline{G}_{0}^{(0)}=\overline{G}^{(0)}$
if $G$ is unipotent or $j^{1} G$ is connected we can work with Lie algebras of vector
fields in order to prove theorems \ref{teo:mainc}, \ref{teo:main} and
\ref{teo:mainn}. Section \ref{sec:sollen} is devoted to bound the derived length of
solvable Lie algebras of vector fields.
We interpret the vector fields as actions on some finite dimensional spaces of
meromorphic first integrals of some Lie subalgebras in the derived series.
By using these ideas we associate Lie algebras of matrices
to any solvable Lie algebra.
We can apply the Lie theorem (th. \ref{teo:Lie}) to the Lie algebras of matrices
in order to obtain the patterns that appear in the reduction of the derived series.
In section \ref{sec:exa} we prove that the bounds for the soluble length
are optimal by providing examples.
These examples follow the patterns presented in section \ref{sec:sollen}.
\section{Notations}
Let ${\rm Diff} ({\mathbb C}^{n},0)$ be the group of germs of complex
analytic diffeomorphisms defined in a neighborhood of $0 \in {\mathbb C}^{n}$.
Consider the maximal ideals ${\mathfrak m}_{0}$ and ${\mathfrak m}$
of  ${\mathbb C}\{x_{1},\hdots,x_{n}\}$ and  ${\mathbb C}[[x_{1},\hdots,x_{n}]]$
respectively. Let $\diffh{}{n}$ be
the formal completion of $\diff{}{n}$
with respect to the filtration
$\{{\mathfrak m}_{0}^{k} \times \hdots \times {\mathfrak m}_{0}^{k} \}_{k \in {\mathbb N} \cup \{0\}}$
(see \cite{Eisenbud}, section 7.1). The composition in
$\diffh{}{n}$ is defined in the natural way by taking the composition in
$\diff{}{n}$ and passing to the limit in the Krull topology
(see \cite{Eisenbud}, page 204).
We denote by $\diff{u}{n}$ the subgroup of unipotent elements of $\diff{}{n}$, more precisely
$\varphi \in \diff{u}{n}$ if $j^{1} \varphi$ is a unipotent linear isomorphism
(i.e. $j^{1} \varphi - Id$ is nilpotent).
 Analogously we denote $\diffh{u}{n}$
the formal completion of $\diff{u}{n}$.

We denote by
${\mathcal X} \cn{n}$ the set of germs of complex analytic
vector fields which are singular at $0$. We denote
by ${\mathcal X}_{N} \cn{n}$ the subset of ${\mathcal X} \cn{n}$ of nilpotent vector fields, i.e.
vector fields whose first jet has the unique eigenvalue $0$.
The formal completions of these spaces are denoted by
$\hat{\mathcal X} \cn{n}$ and $\hat{\mathcal X}_{N} \cn{n}$ respectively.

The expression
\begin{equation}
\label{equ:exp}
{\rm exp} (t \hat{X}) = \left({
\sum_{j=0}^{\infty} \frac{t^{j}}{j!} \hat{X}^{j}(x_{1}), \hdots,
\sum_{j=0}^{\infty} \frac{t^{j}}{j!} \hat{X}^{j}(x_{n}) }\right)
\end{equation}
defines the exponential of $t \hat{X}$ for $\hat{X} \in \hat{\mathcal X} \cn{n}$
and $t \in {\mathbb C}$.
Let us remark that $\hat{X}^{j}(g)$ is the result of applying $j$ times the derivation
$\hat{X}$ to the power series $g$. The definition coincides
with the classical one if $\hat{X}$ is a germ of convergent vector field.
Indeed given an element
\[ \hat{X} = \hat{a}_{1} \frac{\partial}{\partial x_{1}} + \hdots + \hat{a}_{n}
\frac{\partial}{\partial x_{n}} \in \hat{\mathcal X} \cn{n} \]
and $X =  {a}_{1} \partial /\partial x_{1} + \hdots + {a}_{n} \partial /\partial x_{n}
\in {\mathcal X} \cn{n}$ such that $\hat{a}_{j} - a_{j} \in {\mathfrak m}^{k+1}$
for any $1 \leq j \leq n$
then the $k$-jet $j^{k} {\rm exp}(t \hat{X})$ of ${\rm exp}(t \hat{X})$
is equal to $j^{k} {\rm exp}(t {X})$.

For
$\hat{X}$ in $\hat{\mathcal X}_{N} \cn{n}$ the sums defining the components of
${\rm exp}(t \hat{X})$
converge in the Krull topology of ${\mathbb C}[[x_{1},\hdots,x_{n}]]$,
i.e. the multiplicity at the origin of
$\hat{X}^{j}(g)$ tends to $\infty$ when $j \to \infty$ for any
$g \in {\mathbb C}[[x_{1},\hdots,x_{n}]]$.
The unipotent germs of diffeomorphisms are related with nilpotent
vector fields;  the next proposition is classical.
\begin{pro}
\label{pro:clas}
(see \cite{Ecalle}, \cite{MaRa:aen})
The exponential mapping ${\rm exp}$ induces a bijection from
$\hat{\mathcal X}_{N} \cn{n}$ onto $\diffh{u}{n}$.
\end{pro}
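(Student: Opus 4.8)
The plan is to reduce this infinite-dimensional statement to the elementary finite-dimensional correspondence between nilpotent and unipotent endomorphisms of a vector space, exploiting the filtration of ${\mathbb C}[[x_{1},\ldots,x_{n}]]$ by powers of the maximal ideal $\mathfrak m$.

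First I would reinterpret both sides through their action on the algebra ${\mathbb C}[[x_{1},\ldots,x_{n}]]$. A formal diffeomorphism $\varphi \in \diffh{}{n}$ acts by composition $g \mapsto g \circ \varphi$ as a continuous ${\mathbb C}$-algebra automorphism $\Phi$ preserving $\mathfrak m$, and conversely every such automorphism comes from a unique $\varphi$ (recovered from the images of the coordinates). A vector field $\hat X \in \hat{\mathcal X} \cn{n}$ acts as a continuous derivation $D$ preserving $\mathfrak m$. Since the coefficients of $\hat X$ lie in $\mathfrak m$, the operator $D$ maps each $\mathfrak m^{k}$ into itself, hence induces a linear endomorphism $D_{k}$ of the finite-dimensional space $V_{k} := \mathfrak m / \mathfrak m^{k+1}$; likewise $\Phi$ induces $\Phi_{k}$. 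On the associated graded piece $\mathfrak m^{j}/\mathfrak m^{j+1} \cong {\rm Sym}^{j}(\mathfrak m/\mathfrak m^{2})$ these operators act as the $j$-th symmetric power of the linear part $j^{1} \hat X$, respectively $j^{1} \varphi$. Therefore $\hat X \in \hat{\mathcal X}_{N} \cn{n}$ if and only if every $D_{k}$ is nilpotent, and $\varphi \in \diffh{u}{n}$ if and only if every $\Phi_{k}$ is unipotent.

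Next I would define the inverse candidate level by level. For a unipotent $\Phi_{k}$ the series $\log \Phi_{k} = \sum_{j \geq 1} (-1)^{j+1} (\Phi_{k} - Id)^{j} / j$ terminates, and for a nilpotent $D_{k}$ the series $\exp D_{k} = \sum_{j \geq 0} D_{k}^{j}/j!$ terminates; these are mutually inverse bijections between the nilpotent and the unipotent endomorphisms of $V_{k}$, by the classical polynomial identities $\log \circ \exp = Id$ and $\exp \circ \log = Id$. Because $D$ is strictly compatible with the filtration, the operators $D_{k}$ and $\Phi_{k}$ are compatible with the projections $V_{k+1} \to V_{k}$, so the level-wise exponentials and logarithms assemble, in the projective limit, into maps $\exp$ and $\log$ on the whole of ${\mathbb C}[[x_{1},\ldots,x_{n}]]$; the convergence in the Krull topology noted before the statement is precisely what guarantees this passage to the limit. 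It remains to check that these limit maps have the right algebraic nature.

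The crux of the argument, and the step I expect to require the most care, is verifying that $\exp D$ is multiplicative and that $\log \Phi$ satisfies the Leibniz rule. Multiplicativity follows from the derivation property through the binomial identity $D^{m}(fg) = \sum_{i=0}^{m} \binom{m}{i} D^{i}(f)\, D^{m-i}(g)$, which upon dividing by $m!$ and summing reorganizes into $\exp(D)(fg) = \exp(D)(f)\,\exp(D)(g)$; the rearrangement of the double series is legitimate because at each finite jet only finitely many terms contribute. Dually, applying $\log$ to the relation $\Phi(fg) = \Phi(f)\Phi(g)$ and collecting terms yields $\log(\Phi)(fg) = \log(\Phi)(f)\, g + f\, \log(\Phi)(g)$, so $\log \Phi$ is a derivation. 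Granting these two facts, $\exp$ sends $\hat{\mathcal X}_{N} \cn{n}$ into the unipotent automorphisms (each induced operator $\exp(D)_{k} = \exp(D_{k})$ is unipotent and $\exp D$ is an automorphism, hence corresponds to some $\varphi \in \diffh{u}{n}$), $\log$ sends $\diffh{u}{n}$ into $\hat{\mathcal X}_{N} \cn{n}$, and the finite-level identities $\log \circ \exp = Id$ and $\exp \circ \log = Id$ pass to the limit to show that the two maps are mutually inverse. This establishes that $\exp$ is a bijection from $\hat{\mathcal X}_{N} \cn{n}$ onto $\diffh{u}{n}$.
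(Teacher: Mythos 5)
The paper does not prove this proposition: it is stated as a classical fact with references to \'Ecalle and Martinet--Ramis, so there is no internal proof to compare against. Your argument is the standard one and is essentially correct: filtering ${\mathbb C}[[x_{1},\hdots,x_{n}]]$ by powers of ${\mathfrak m}$, identifying $\hat{X}$ and $\varphi$ with the induced operators $D_{k}$ and $\Phi_{k}$ on ${\mathfrak m}/{\mathfrak m}^{k+1}$, invoking the finite-dimensional $\exp$/$\log$ bijection between nilpotent and unipotent endomorphisms, and passing to the projective limit is exactly how this is done (and is consistent with the jet-space machinery the paper sets up in Section 3, where the groups $D_{k}$ and the equivalence ``$\varphi_{1}$ unipotent iff all $\varphi_{k}$ unipotent'' appear). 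The one step you correctly flag as delicate is also the one you do not actually carry out: asserting that ``applying $\log$ to $\Phi(fg)=\Phi(f)\Phi(g)$ and collecting terms'' yields the Leibniz rule is not an argument, since $\log$ is not applied to a product of operators in any obvious sense. Two standard ways to close this: either prove the combinatorial identity directly (write $N=\Phi-Id$, note $N(fg)=N(f)g+fN(g)+N(f)N(g)$, and verify by induction that the terms of $\sum(-1)^{j+1}N^{j}/j$ violating Leibniz cancel), or, more in the spirit of the paper, observe that the automorphism condition defines a Zariski-closed subgroup of $GL({\mathfrak m}/{\mathfrak m}^{k+1})$ (the group $D_{k}$ of Section 3), so the one-parameter unipotent subgroup $\{\exp(t\log\Phi_{k})\}$ stays inside it (the same argument as in Lemma 3.1 of the paper), and differentiating at $t=0$ shows $\log\Phi_{k}$ is a derivation. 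With that step filled in, your proof is complete; the remaining verifications (multiplicativity of $\exp D$ via the binomial form of Leibniz, recovery of $\varphi$ from the images of the coordinates, injectivity and surjectivity from the level-wise bijections) are all sound.
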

\begin{defi}
Let $\varphi \in \diffh{u}{n}$. We denote by $\log \varphi$ the unique element of
$\hat{\mathcal X}_{N} \cn{n}$
such that $\varphi = {\rm exp}(\log \varphi)$. We say that
$\log \varphi$ is the infinitesimal generator of $\varphi$.
\end{defi}
In general the infinitesimal generator of a germ of diffeomorphism
is a divergent vector field (see \cite{Ah-Ro}).
\section{Lie algebras of connected and unipotent groups}
\label{sec:lie}
Let $G \subset \diff{}{n}$ be a group of local diffeomorphisms.
We are interested on studying its solvability properties.
The next definitions are included for the sake of clarity.
\begin{defi}
\label{def:com}
Let $G$ be a group. We define
$[\alpha, \beta] = \alpha \beta \alpha^{-1} \beta^{-1}$
the commutator of $\alpha$ and $\beta$.
Consider subgroups $H$ and $L$ of $G$.
We define $[H,L]$ the subgroup of $G$ generated by the elements of the
form $[\alpha,\beta]$ for $\alpha \in H$ and $\beta \in L$. We define
\[ G^{(0)} = G, \  G^{(n+1)} = [G^{(n)},G^{(n)}]  \ \forall n \geq 0\]
the derived series of $G$. We define
\[ {\mathcal C}^{0} G = G, \  {\mathcal C}^{n+1} G = [G, {\mathcal C}^{n} G]  \ \forall n \geq 0\]
the descending central series of $G$.
\end{defi}
\begin{defi}
\label{def:dlie}
Let ${\mathfrak g}$ be a complex Lie algebra.
Denote by $[X,Y]$ the Lie bracket of $X,Y \in {\mathfrak g}$.
Consider Lie subalgebras ${\mathfrak h}$ and ${\mathfrak l}$ of
${\mathfrak g}$.
We define $[{\mathfrak h},{\mathfrak l}]$ the
Lie subalgebra of ${\mathfrak g}$
generated by the elements of the
form $[X,Y]$ for $X \in {\mathfrak h}$ and $Y \in {\mathfrak l}$. We define
\[ {\mathfrak g}^{(0)} = {\mathfrak g}, \
{\mathfrak g}^{(n+1)} = [{\mathfrak g}^{(n)},{\mathfrak g}^{(n)}]  \ \forall n \geq 0\]
the derived series of ${\mathfrak g} $. We define
\[ {\mathcal C}^{0} {\mathfrak g} = {\mathfrak g}, \
{\mathcal C}^{n+1} {\mathfrak g} = [{\mathfrak g}, {\mathcal C}^{n} {\mathfrak g}]  \ \forall n \geq 0\]
the descending central series of ${\mathfrak g}$.
\end{defi}
\begin{defi}
\label{def:length}
Let $G$ be a group (resp. a Lie algebra).
We define $l(G)$ the soluble length of $G$
as
\[ l(G) = \min \{ k \in {\mathbb N} \cup \{0 \} :  G^{(k)} = \{ Id \} \} \]
where $\min \emptyset = \infty$. We say that $G$ is solvable if
$l(G) < \infty$. We say that $G$ is nilpotent if there exists
$j \geq 0$ such that ${\mathcal C}^{j} G = \{ Id \}$.
If $j$ is the minimum non-negative integer number with such a property
we say that $G$ is of nilpotent class $j$.
\end{defi}
In order to analyze the properties of $l(G)$ for $G \subset \diff{}{n}$
we can study the group
$j^{1} G$ of linear parts of elements of $G$. Let
$\rho: {\mathbb N} \to {\mathbb N}$ be the Newman function
(\cite{Newman}, see also the introduction).
Then the $\rho(n)$-derived group $G^{(\rho(n))}$ is a group of local diffeomorphisms
tangent to the identity. It is natural to study the soluble length of
such groups. In this paper we deal with bigger classes of groups.
\begin{defi}
Let $G$ a subgroup of $\diffh{}{n}$. We say that $G$ is {\it connected}
if the closure in the Zariski topology of the subgroup of linear parts
$j^{1} G \subset GL(n,{\mathbb C})$ of $G$ is connected.
\end{defi}
\begin{defi}
Let $G$ a subgroup of $\diffh{}{n}$. We say that $G$ is {\it unipotent}
if $G \subset \diffh{u}{n}$.
\end{defi}
This kind of groups is interesting in itself. For instance given a solvable
subgroup $G$ of $\diffh{}{n}$ we can consider the subset $G_{u}$ of
$G$ composed by the unipotent elements of $G$. The solvable nature
of $G$ implies that $G_{u}$ is a group. Thus $G_{u}$ is a normal subgroup
of $G$. We can consider $G$ as a group of automorphisms of $G_{u}$
acting by conjugation. The structure of the solvable group $G_{u}$
helps to determine the solubility properties of $G$.
%
%
%
%

Ghys associates a Lie algebra of formal nilpotent vector fields
to any group of unipotent diffeomorphisms (prop. 4.3 in \cite{Ghys-identite}).
In the same spirit we present a construction that
associates a Lie subalgebra of $\hat{\mathcal X}  \cn{n}$
to  any  subgroup $G$ of $\diffh{}{n}$. We replace
$G$ with a subgroup
$\overline{G}^{(0)}$ of $\diffh{}{n}$ containing $G$ that is, roughly speaking,
the algebraic closure of $G$. Such a group satisfies
$l(\overline{G}^{(0)})=l(G)$ and it has a natural Lie algebra.
In particular the construction associates a Lie algebra, with analogous
algebraic properties, to every connected group of diffeomorphisms.

 Let ${\mathfrak m}$ the maximal ideal of
${\mathbb C}[[x_{1},\hdots,x_{n}]]$.
Any formal diffeomorphism $\varphi \in \diffh{}{n}$ acts on the
finite dimensional complex vector space ${\mathfrak m}/{\mathfrak m}^{k+1}$ of $k$-jets.
More precisely $\varphi$ defines an element
$\varphi_{k}$ of $GL({\mathfrak m}/{\mathfrak m}^{k+1})$ given by
\[
\begin{array}{ccc}
{\mathfrak m}/{\mathfrak m}^{k+1} &
\stackrel{\varphi_{k}}{\rightarrow} & {\mathfrak m}/{\mathfrak m}^{k+1} \\
g + {\mathfrak m}^{k+1}& \mapsto & g \circ \varphi + {\mathfrak m}^{k+1}
\end{array} .
\]
Analogously a formal vector field $X \in \hat{\mathcal X} \cn{n}$
defines an element $X_{k}$ of $GL({\mathfrak m}/{\mathfrak m}^{k+1})$ given by
\begin{equation}
\label{equ:actvf}
\begin{array}{ccc}
{\mathfrak m}/{\mathfrak m}^{k+1} &
\stackrel{X_{k}}{\rightarrow} & {\mathfrak m}/{\mathfrak m}^{k+1} \\
g + {\mathfrak m}^{k+1}& \mapsto & X(g) + {\mathfrak m}^{k+1}
\end{array} .
\end{equation}
Consider the group $D_{k} \subset GL({\mathfrak m}/{\mathfrak m}^{k+1})$
defined as
\[ D_{k} = \{ \alpha \in GL({\mathfrak m}/{\mathfrak m}^{k+1}) :
\alpha (g h) = \alpha (g) \alpha (h) \ \forall g,h \in  {\mathfrak m}/{\mathfrak m}^{k+1} \} .  \]
The equations of the form $\alpha (g h) = \alpha (g) \alpha (h)$ are algebraic
in the coefficients of $\alpha$. Thus $D_{k}$ is an algebraic group, indeed it is
the subgroup $\{ \varphi_{k} : \varphi \in \diffh{}{n} \}$ of actions on
${\mathfrak m}/{\mathfrak m}^{k+1}$ given by formal diffeomorphisms.
Fix $k \in {\mathbb N}$. An element $\alpha$ of
$D_{k+1}$ satisfies
$\alpha ({\mathfrak m}^{k+1}/ {\mathfrak m}^{k+2}) = {\mathfrak m}^{k+1}/ {\mathfrak m}^{k+2}$.
Therefore $\alpha$ induces a unique element in $D_{k}$. In this way
we  define a mapping $\pi_{k} :D_{k+1}  \to D_{k}$.

We define the group
$C_{k}=\{ \varphi_{k} : \varphi \in G\} \subset GL({\mathfrak m}/{\mathfrak m}^{k+1})$.
We consider the matrix group $G_{k}$
defined as the smallest algebraic subgroup of $GL({\mathfrak m}/{\mathfrak m}^{k+1})$
containing $C_{k}$.
We have $\pi_{k}(C_{k+1})=C_{k}$ for any $k \in {\mathbb N}$.
The mapping $\pi_{k} :D_{k+1}  \to D_{k}$ is a morphism of algebraic groups.
Thus the image by $\pi_{k}$ of the smallest algebraic subgroup of $D_{k+1}$
containing $C_{k+1}$ is the smallest algebraic subgroup of $D_{k}$ containing
$C_{k}= \pi_{k}(C_{k+1})$ (see 2.1 (f), page 57 \cite{Borel}).
In other words we obtain $\pi_{k} (G_{k+1})=G_{k}$ for any $k \in {\mathbb N}$.
\begin{defi}
\label{def:liegrp}
Let $G$ be a subgroup of $\diffh{}{n}$.
We denote $G_{k,0}$ the connected component of the identity of
$G_{k}$ for any $k \in {\mathbb N}$. We define
\[ \overline{G}^{(0)} = \{ \varphi \in \diffh{}{n} : \varphi_{k} \in G_{k} \ \forall k \in {\mathbb N} \} . \]
Clearly $\overline{G}^{(0)}$ is a subgroup of $\diffh{}{n}$ containing $G$ that is closed
in the Krull topology. We define
\[ \overline{G}_{0}^{(0)} = \{ \varphi \in \diffh{}{n} : \varphi_{k} \in G_{k,0} \
\forall k \in {\mathbb N} \} . \]
We can consider $\overline{G}_{0}^{(0)}$ as
the connected component of the identity of $\overline{G}^{(0)}$.
\end{defi}
\begin{defi}
\label{def:liegrp2}
Let $G$ be a subgroup of $\diffh{}{n}$.
We define $G_{k,s}$ as the subset of $G_{k}$ of diagonalizable elements.
We define $G_{k,u}$ as the subset of $G_{k}$ of unipotent elements.
By the Jordan-Chevalley  decomposition for algebraic groups any element $\alpha$
of $G_{k}$ can be expressed in the form
\[ \alpha = \alpha_{s} \circ \alpha_{u} =  \alpha_{u} \circ \alpha_{s} \]
for commuting $\alpha_{s} \in G_{k,s}$ and  $\alpha_{u} \in G_{k,u}$
in a unique way. We define
\[ \overline{G}_{s}^{(0)} = \{ \varphi \in \diffh{}{n} : \varphi_{k} \in G_{k,s}
\ \forall k \in {\mathbb N} \}  \]
and
\[ \overline{G}_{u}^{(0)} = \{ \varphi \in \diffh{}{n} : \varphi_{k} \in G_{k,u}
\ \forall k \in {\mathbb N} \} . \]
\end{defi}
\begin{rem}
The elements of $\overline{G}_{s}^{(0)}$ are formally linearizable.
It is well-known and a simple exercise that
$\varphi \in \diffh{u}{n}$ (or equivalently $\varphi_{1}$ is unipotent)
if and only if $\varphi_{k}$ is unipotent for any $k \in {\mathbb N}$.
Thus we obtain
$\overline{G}_{u}^{(0)} = \overline{G}^{(0)}  \cap  \diffh{u}{n}$.
\end{rem}
The next lemmas are intended to introduce several basic facts about connected
groups. For instance $G_{k}$ is always connected for $k \in {\mathbb N}$.
This makes these groups well represented by their Lie algebras at any finite jet
level. Their algebraic properties can be translated to the Lie algebra setting
where it is easier to work.
\begin{lem}
\label{lem:elpro}
Let $G$ be a subgroup of $\diffh{}{n}$.
Then
\begin{itemize}
\item $l(\overline{G}^{(0)})=l(G)$.
\item $G_{k,u} \subset G_{k,0}$ for any $k \in {\mathbb N}$.
\item ${\rm exp}(t \log \varphi) \in \overline{G}_{0}^{(0)}$ for all
$\varphi \in \overline{G}_{u}^{(0)}$ and $t \in {\mathbb C}$.
\end{itemize}
\end{lem}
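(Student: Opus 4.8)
The plan is to prove the three assertions of Lemma \ref{lem:elpro} by exploiting the projective limit structure of $\overline{G}^{(0)}$ and reducing everything to facts about the finite-dimensional algebraic groups $G_{k}$. For the first item, $l(\overline{G}^{(0)})=l(G)$, I would first observe that since $G \subset \overline{G}^{(0)}$ we automatically get $l(G) \le l(\overline{G}^{(0)})$. For the reverse inequality, the key point is that the derived series commutes with taking algebraic closures at each finite jet level: because $G_{k}$ is the smallest algebraic subgroup of $D_{k}$ containing $C_{k}=\{\varphi_{k} : \varphi \in G\}$, the derived group $(G_{k})^{(j)}$ is the smallest algebraic subgroup containing $(C_{k})^{(j)}$, by the standard fact (cited as 2.1 (f) in \cite{Borel}) that the closure of a subgroup has the same image behaviour under morphisms and that derived groups of algebraic groups are again algebraic and are the closures of the abstract derived groups. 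Thus $(\overline{G}^{(0)})^{(j)}$ is the projective limit of the $(G_{k})^{(j)}$, and $G^{(j)}=\{Id\}$ forces $(C_{k})^{(j)}=\{Id\}$ hence $(G_{k})^{(j)}=\{Id\}$ for all $k$, whence $(\overline{G}^{(0)})^{(j)}=\{Id\}$. The subtle point I must verify carefully is that $\varphi \mapsto \varphi_{k}$ respects commutators and that $(G_{k})^{(j)}$ is genuinely the Zariski closure of $(C_{k})^{(j)}$; this is where I would lean on Borel's results about derived series of linear algebraic groups.

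For the second item, $G_{k,u} \subset G_{k,0}$, I would use the general structure theory of linear algebraic groups: the set of unipotent elements of a connected solvable group, or more generally the unipotent radical behaviour, sits inside the connected component. Concretely, any unipotent element $u$ of an algebraic group lies in a connected one-parameter unipotent subgroup $\{u^{s} : s \in \mathbb{C}\}$ obtained via the logarithm (the map $s \mapsto \exp(s \log u)$ is a morphism from the additive group $\mathbb{G}_{a}$, which is connected, into $G_{k}$, and its image contains $u=u^{1}$ and the identity $u^{0}$). Since the image of a connected group under a morphism is connected and contains the identity, it lies in $G_{k,0}$, giving $u \in G_{k,0}$. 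This argument is clean and I expect no real obstacle here beyond confirming that the one-parameter subgroup through a unipotent element is well defined and algebraic inside $GL(\mathfrak{m}/\mathfrak{m}^{k+1})$.

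The third item, that $\exp(t \log \varphi) \in \overline{G}_{0}^{(0)}$ for every $\varphi \in \overline{G}_{u}^{(0)}$ and $t \in \mathbb{C}$, is where I expect the main work. The strategy is to check membership jet by jet: by definition of $\overline{G}_{0}^{(0)}$ it suffices to show $(\exp(t \log \varphi))_{k} \in G_{k,0}$ for every $k$. Since $\varphi \in \overline{G}_{u}^{(0)}$ we have $\varphi_{k} \in G_{k,u}$, so $\varphi_{k}$ is unipotent and by the remark preceding the lemma $\varphi = \exp(\log\varphi)$ with $\log\varphi \in \hat{\mathcal X}_{N}\cn{n}$. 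The action $(\log\varphi)_{k}$ on $\mathfrak{m}/\mathfrak{m}^{k+1}$ defined in (\ref{equ:actvf}) is a nilpotent endomorphism, and I would argue that $(\exp(t\log\varphi))_{k} = \exp(t(\log\varphi)_{k})$ as operators on the finite-dimensional space $\mathfrak{m}/\mathfrak{m}^{k+1}$ — this compatibility between the exponential of the vector field and the exponential of its induced linear action is the technical heart and must be justified from the defining formula (\ref{equ:exp}). Granting this, the map $t \mapsto \exp(t(\log\varphi)_{k})$ is a morphism from $\mathbb{G}_{a}$ into $D_{k}$ whose image is a connected subgroup passing through the identity and through $\varphi_{k} \in G_{k}$. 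Combined with item two (which places the unipotent $\varphi_{k}$ inside the connected component $G_{k,0}$), the whole one-parameter group lies in $G_{k,0}$, since a connected subgroup through the identity meeting $G_{k}$ in a one-parameter unipotent family is contained in $G_{k,0}$. The delicate step is verifying that this one-parameter family actually lands in $G_{k}$ and not merely in $D_{k}$; I would handle this by noting that $\{(\log\varphi)_{k}\}$ generates, via exponentiation, the Zariski closure of the cyclic group $\{\varphi_{k}^{m} : m \in \mathbb{Z}\}$, whose unipotent one-parameter closure is forced to sit inside $G_{k,0}$ because $G_{k}$ is algebraic and already contains $\varphi_{k}$.
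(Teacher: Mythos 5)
Your proposal is correct and follows essentially the same route as the paper: reduce each assertion to the finite-dimensional algebraic groups $G_{k}$, note that the solvability identities are algebraic conditions that pass to Zariski closures, and use the classical fact that a unipotent element $\alpha=\exp(X_{k})$ of an algebraic group lies on the one-parameter subgroup $\{\exp(tX_{k})\}$ inside $G_{k,0}$ (the paper justifies the containment in $G_{k}$ by observing that any polynomial vanishing on $G_{k}$, evaluated along $t\mapsto\exp(tX_{k})$, vanishes on $\mathbb{Z}$ and hence identically --- the same Zariski-closure-of-the-cyclic-group argument you invoke at the end of your third item). The only cosmetic difference is that you defer that containment argument to item three, whereas it is already needed to place the image of $\mathbb{G}_{a}$ inside $G_{k}$ in item two; since you do supply the idea, the proof is complete.
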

\begin{proof}
Since $l(G) \leq l(\overline{G}^{(0)})$ it suffices to prove that
$G^{(p)} = \{Id\}$ implies $(\overline{G}^{(0)})^{(p)} = \{Id\}$.
The property $G^{(p)} = \{Id\}$ is equivalent to a system of algebraic equations
in each space of jets. We obtain $G_{k}^{(p)} = \{Id\}$ for any $k \in {\mathbb N}$ and
then $(\overline{G}^{(0)})^{(p)} = \{Id\}$.

The second result is classical in characteristic $0$
(see lemma C, page 96 \cite{Humphreys}).
Fix a unipotent element $\alpha$ of $G_{k}$. There exists
a nilpotent element $X_{k}$ of
$End({\mathfrak m}/{\mathfrak m}^{k+1})$ such that
$\alpha = {\rm exp}(X_{k})$. Given an algebraic
equation $H$ vanishing on $G_{k}$ we have that
$H({\rm exp}(t X_{k}))$ is a polynomial equation in $t$.
Since it vanishes in ${\mathbb Z}$ then it is identically $0$.
We obtain that ${\rm exp}(t X_{k})$ belongs to $G_{k}$ for any
$t \in {\mathbb C}$.
Then $\{ {\rm exp}(t X_{k}) \}$ is  a $1$-dimensional
algebraic subgroup  of $G_{k}$ containing $Id$ and $\alpha$.

The last property is an immediate consequence of the discussion in
the previous paragraph.
\end{proof}
\begin{lem}
\label{lem:elprox2}
Let $G$ be a subgroup of $\diffh{}{n}$.
Fix $k \in {\mathbb N}$. Consider connected components $B_{1}$,
$B_{2}$ of $G_{k+1}$ such that
$\pi_{k}(B_{1}) \cap \pi_{k}(B_{2}) \neq \emptyset$.
Then we obtain $B_{1}=B_{2}$.\end{lem}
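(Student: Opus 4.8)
The plan is to reduce the statement to the assertion that the surjective morphism $\pi_k : G_{k+1} \to G_k$ of algebraic groups induces an \emph{injective} map on the groups of connected components, and this in turn will follow once I show that the kernel of $\pi_k$ restricted to $G_{k+1}$ is connected. First I would record the structural facts already available: $\pi_k : D_{k+1} \to D_k$ is a morphism of algebraic groups and $\pi_k(G_{k+1}) = G_k$, so $\pi_k|_{G_{k+1}} : G_{k+1} \to G_k$ is a surjective morphism of algebraic groups. The image of the identity component under a surjective morphism of algebraic groups is the identity component of the image (indeed $\pi_k(G_{k+1,0})$ is a closed connected subgroup of finite index in $G_k$, hence equals $G_{k,0}$; cf. \cite{Borel}, \cite{Humphreys}), so $\pi_k(G_{k+1,0}) = G_{k,0}$. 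Writing each component as $B_i = g_i\, G_{k+1,0}$, this gives $\pi_k(B_i) = \pi_k(g_i)\, G_{k,0}$; that is, $\pi_k(B_i)$ is a single connected component of $G_k$, so the hypothesis $\pi_k(B_1) \cap \pi_k(B_2) \neq \emptyset$ forces $\pi_k(g_1)\, G_{k,0} = \pi_k(g_2)\, G_{k,0}$.

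The crux is the claim that $\mathcal{N} := \ker(\pi_k|_{G_{k+1}})$ is connected, equivalently $\pi_k^{-1}(G_{k,0}) \cap G_{k+1} = G_{k+1,0}$. To get this I would identify $\ker(\pi_k : D_{k+1} \to D_k)$ explicitly. An element of this kernel is induced by a formal diffeomorphism $\varphi$ with $\varphi_i - x_i \in {\mathfrak m}^{k+1}$ for all $i$; since $k \geq 1$ one has degree $k+1 \geq 2$, so modulo ${\mathfrak m}^{k+2}$ the composition law reduces to addition of the homogeneous degree-$(k+1)$ parts (because $Q_i(x+P) \equiv Q_i(x) \ (\mathrm{mod}\ {\mathfrak m}^{k+2})$). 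Hence $\ker(\pi_k : D_{k+1} \to D_k)$ is a commutative unipotent group, isomorphic to a vector group $({\mathbb C}^{M},+)$, and it acts unipotently on ${\mathfrak m}/{\mathfrak m}^{k+2}$. Its intersection $\mathcal{N}$ with $G_{k+1}$ is then a closed subgroup of a unipotent group, hence unipotent, and in characteristic $0$ every unipotent algebraic group is connected. This establishes connectedness of $\mathcal{N}$.

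Connectedness of $\mathcal{N}$ yields the desired identity by the standard formalism: on $L := \pi_k^{-1}(G_{k,0}) \cap G_{k+1}$ the restriction of $\pi_k$ is a surjective morphism onto the connected group $G_{k,0}$ with connected kernel $\mathcal{N}$, so $L$ is connected (a surjection of algebraic groups with connected kernel and connected target has connected source: $\mathcal{N} \subseteq L_{0}$ since it is connected and contains the identity, and $\pi_k(L_{0}) = G_{k,0}$, so $L = \mathcal{N}\cdot L_{0} = L_{0}$); since $L \supseteq G_{k+1,0}$ and $L$ is connected containing the identity, $L = G_{k+1,0}$. Assembling everything, the equality $\pi_k(g_1)\, G_{k,0} = \pi_k(g_2)\, G_{k,0}$ gives $\pi_k(g_1^{-1} g_2) \in G_{k,0}$, hence $g_1^{-1} g_2 \in \pi_k^{-1}(G_{k,0}) \cap G_{k+1} = G_{k+1,0}$, and therefore $B_2 = g_2\, G_{k+1,0} = g_1\, G_{k+1,0} = B_1$.

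The main obstacle is the middle step, namely the connectedness of $\mathcal{N}$; everything else is the routine algebraic-group machinery of surjective morphisms, identity components and component groups. I expect the delicate point to be the clean identification of $\ker(\pi_k : D_{k+1} \to D_k)$ as a unipotent (vector) group, after which connectedness of $\mathcal{N}$ is immediate from the characteristic-zero fact that unipotent algebraic groups, and thus their closed subgroups, are connected.
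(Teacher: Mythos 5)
Your proof is correct, but it is organized quite differently from the paper's and does considerably more work. The paper's argument is three lines: pick $\beta_{1} \in B_{1}$ and $\beta_{2} \in B_{2}$ with $\pi_{k}(\beta_{1}) = \pi_{k}(\beta_{2})$; then $\beta_{2} \circ \beta_{1}^{-1}$ lies in $\ker \pi_{k}$, hence has trivial image in $D_{1}$, hence is a unipotent element of $G_{k+1}$; and the second item of lemma \ref{lem:elpro} already places every unipotent element of $G_{k+1}$ in $G_{k+1,0}$, so $B_{1}=B_{2}$. Your route --- identifying $\ker(\pi_{k} : D_{k+1} \to D_{k})$ as a commutative vector group via $(Id+P)\circ(Id+Q) \equiv Id + P + Q \pmod{{\mathfrak m}^{k+2}}$ (valid since $k \geq 1$), deducing that ${\mathcal N}=\ker(\pi_{k}|_{G_{k+1}})$ is unipotent and hence connected in characteristic $0$, and then running the component-group formalism --- is sound, and your use of $\pi_{k}(G_{k+1,0})=G_{k,0}$ is not circular, since you derive it from the standard fact that a surjective morphism of algebraic groups maps identity component onto identity component rather than from the present lemma (the paper instead obtains that equality afterwards, as lemma \ref{lem:elprox}, \emph{using} this lemma). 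Ultimately both arguments rest on the same kernel fact --- elements of $\ker\pi_{k}$ are unipotent, and unipotence forces membership in the identity component --- stated element-wise in the paper and group-wise by you; the extra structure you establish (the explicit vector-group description of the kernel, connectedness of ${\mathcal N}$ as a group) is more than the statement requires, while the paper's version is shorter precisely because lemma \ref{lem:elpro} was set up to deliver exactly the needed conclusion for a single unipotent element.
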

\begin{proof}
Let $\beta_{1} \in B_{1}$ and $\beta_{2} \in B_{2}$ such that
$\pi_{k} (\beta_{1})=\pi_{k} (\beta_{2})$.
We obtain $\pi_{k} (\beta_{2} \circ \beta_{1}^{(-1)}) = Id$. Since
\[ \pi_{1} \circ \hdots \circ \pi_{k} ( \beta_{2} \circ \beta_{1}^{(-1)}) = Id \]
then $\beta_{2} \circ \beta_{1}^{(-1)}$ is a unipotent element of
$G_{k+1}$. Thus $\beta_{2} \circ \beta_{1}^{(-1)}$ is in the connected
component of the identity of $G_{k+1}$ by lemma \ref{lem:elpro}.
We deduce that $\beta_{1}$ and $\beta_{2}$ are in the same connected
component of $G_{k+1}$.
\end{proof}
\begin{lem}
\label{lem:elprox}
Let $G$ be a subgroup of $\diffh{}{n}$. Fix $k \in {\mathbb N}$.
Then we have $\pi_{k}(G_{k+1,0})=G_{k,0}$.
 In particular the number of connected
components of $G_{k+1}$ and $G_{k}$ coincide.
\end{lem}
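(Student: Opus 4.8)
The plan is to exploit that $\pi_{k} : G_{k+1} \to G_{k}$ is a surjective morphism of algebraic groups (we already know $\pi_{k}(G_{k+1}) = G_{k}$) and to invoke the standard behaviour of such morphisms on identity components. First I would prove the easy inclusion $\pi_{k}(G_{k+1,0}) \subseteq G_{k,0}$: since $G_{k+1,0}$ is connected and contains $Id$, its image under the continuous homomorphism $\pi_{k}$ is a connected subgroup of $G_{k}$ containing $Id$, hence it is contained in the identity component $G_{k,0}$.

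The substance lies in the reverse inclusion. Set $H = \pi_{k}(G_{k+1,0})$; it is a connected subgroup, and since the image of a morphism of algebraic groups is a closed subgroup, $H$ is closed. Because $G_{k+1}$ has only finitely many connected components, say $G_{k+1} = \bigcup_{i} g_{i}\, G_{k+1,0}$, surjectivity of $\pi_{k}$ gives $G_{k} = \bigcup_{i} \pi_{k}(g_{i})\, H$, so $H$ has finite index in $G_{k}$. A closed subgroup of finite index is also open (its finitely many cosets are closed translates, so the complement of $H$ is closed), and an open subgroup necessarily contains the identity component: $G_{k,0} \cap H$ is open and closed in the connected set $G_{k,0}$ and contains $Id$, hence it equals $G_{k,0}$. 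This yields $G_{k,0} \subseteq H$ and therefore $\pi_{k}(G_{k+1,0}) = G_{k,0}$. Alternatively one may simply cite the general fact that a morphism of algebraic groups carries the identity component onto the identity component of the image, in the spirit of the reference already used to obtain $\pi_{k}(G_{k+1}) = G_{k}$.

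For the final assertion I would use lemma \ref{lem:elprox2}. Every connected component $B$ of $G_{k+1}$ is a coset $g\, G_{k+1,0}$, so by the first part $\pi_{k}(B) = \pi_{k}(g)\, G_{k,0}$ is exactly one connected component of $G_{k}$. This defines a map from the set of components of $G_{k+1}$ to the set of components of $G_{k}$, which is surjective because $\pi_{k}$ is surjective. It is also injective: if two components $B_{1}, B_{2}$ were sent into the same component of $G_{k}$, then the cosets $\pi_{k}(B_{1})$ and $\pi_{k}(B_{2})$ of $G_{k,0}$ would coincide, and in particular $\pi_{k}(B_{1}) \cap \pi_{k}(B_{2}) \neq \emptyset$, so lemma \ref{lem:elprox2} forces $B_{1} = B_{2}$. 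Hence the map is a bijection and the two groups have the same number of connected components.

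I expect the main obstacle to be precisely the reverse inclusion, and within it the justification that $H = \pi_{k}(G_{k+1,0})$ is closed: this is what turns "finite index" into "open", which is in turn what allows the connectedness of $G_{k,0}$ to be brought to bear. Once closedness of the image is secured, every remaining step is formal and the "in particular" clause follows directly from lemma \ref{lem:elprox2}.
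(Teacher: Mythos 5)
Your proof is correct, but your route to the equality $\pi_{k}(G_{k+1,0})=G_{k,0}$ is genuinely different from the paper's. You argue intrinsically with the subgroup $H=\pi_{k}(G_{k+1,0})$: it is closed (image of a morphism of algebraic groups), of finite index (finitely many components of $G_{k+1}$ plus surjectivity of $\pi_{k}$), hence open, hence contains the connected set $G_{k,0}$; together with the trivial inclusion this gives equality. The paper instead works component by component: the images of the components of $G_{k+1}$ are closed connected sets covering $G_{k}$ and are pairwise disjoint by lemma \ref{lem:elprox2}, so connectedness forces $G_{k,0}=\pi_{k}(B_{0})$ for a single component $B_{0}$; since $Id$ lies in $\pi_{k}(B_{0})\cap\pi_{k}(G_{k+1,0})$, lemma \ref{lem:elprox2} identifies $B_{0}$ with $G_{k+1,0}$. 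Your argument has the advantage of not needing lemma \ref{lem:elprox2} for the main equality (you invoke it only for the bijection on components, where the paper also uses it), and it is exactly the textbook fact that a surjective morphism of algebraic groups carries the identity component onto the identity component; the paper's version leans more heavily on its own preparatory lemma. Both proofs rest on the same two nontrivial inputs --- closedness of images of morphisms of algebraic groups and finiteness of the number of connected components --- so the difference is one of packaging rather than substance, and you correctly identify the closedness of $H$ as the point where the real content sits.
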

\begin{proof}
The image $\pi_{k}(B)$ of a connected component of $G_{k+1}$ is an algebraic
subset of $G_{k}$. Since $\pi_{k}(G_{k+1})=G_{k}$
there exists one connected component $B_{0}$  of
$G_{k+1}$ such that $\pi_{k}(B_{0})=G_{k,0}$.
The equality $\pi_{k}(G_{k+1,0})=G_{k,0}$ is a consequence of
$Id \in \pi_{k}(G_{k+1,0}) \cap \pi_{k}(B_{0})$ and lemma \ref{lem:elprox2}.
Hence the image $\pi_{k}(B)$ of a connected component $B$ of $G_{k+1}$ is a
connected component of  $G_{k}$.
\end{proof}
\begin{cor}
\label{cor:elpro3}
Let $G$ be a connected subgroup of $\diffh{}{n}$.
Then
${G}_{k}$ is connected in the Zariski topology for any $k \in {\mathbb N}$.
\end{cor}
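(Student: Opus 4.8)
The plan is to reduce the statement to the first jet, where connectedness is essentially the hypothesis, and then to transport it up the tower $\{G_{k}\}$ by means of lemma \ref{lem:elprox}. The base case is the assertion that $G_{1}$ is connected, so the first thing I would do is make precise the relation between $G_{1}$ and the Zariski closure of $j^{1} G$.

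Given $\varphi \in \diffh{}{n}$ with linear part $A = j^{1} \varphi \in GL(n,{\mathbb C})$, the induced map $\varphi_{1}$ on ${\mathfrak m}/{\mathfrak m}^{2}$ sends a linear form $g$ to $g \circ \varphi + {\mathfrak m}^{2}$; expressing this in the basis $x_{1},\hdots,x_{n}$ of ${\mathfrak m}/{\mathfrak m}^{2}$ one checks that $\varphi_{1}$ is represented by the matrix $A^{T}$. Hence $\varphi \mapsto \varphi_{1}$ factors through $j^{1}\varphi$, and the assignment $A \mapsto A^{T}$ is a regular isomorphism of algebraic varieties from $GL(n,{\mathbb C})$ onto $GL({\mathfrak m}/{\mathfrak m}^{2})$ carrying the subgroup $j^{1} G$ onto $C_{1}$. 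Since the smallest algebraic subgroup containing a subgroup is exactly its Zariski closure, $G_{1}$ equals $\overline{C_{1}}$, which is the image of $\overline{j^{1} G}$ under this isomorphism. As isomorphisms of varieties and transposition preserve connectedness, and $G$ is connected by hypothesis (i.e. $\overline{j^{1} G}$ is connected), I conclude that $G_{1}$ is connected.

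Next I would propagate connectedness upward by induction on $k$. Lemma \ref{lem:elprox} guarantees that the number of connected components of $G_{k+1}$ equals that of $G_{k}$ for every $k \in {\mathbb N}$. Therefore the number of connected components of $G_{k}$ is independent of $k$ and equals that of $G_{1}$, which is $1$. Thus $G_{k}$ is connected for every $k \in {\mathbb N}$, as desired.

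The corollary is short precisely because the essential content is already packaged in lemma \ref{lem:elprox}, so there is no serious obstacle to overcome here. The only point that requires a hands-on check is the base case: one must verify that replacing $G$ by its action on linear jets does not change the topology of the group in play, i.e. that $G_{1}$ and $\overline{j^{1} G}$ are isomorphic as algebraic varieties (indeed via $A \mapsto A^{T}$). Once that identification is in place, the connectedness hypothesis on $G$ delivers connectedness at level $1$, and lemma \ref{lem:elprox} delivers it at every level.
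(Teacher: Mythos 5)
Your proof is correct and follows the route the paper intends: the corollary is stated without proof immediately after Lemma \ref{lem:elprox}, and the implicit argument is exactly your induction on the number of connected components, anchored at $k=1$ where the hypothesis that $\overline{j^{1}G}$ is connected transfers to $G_{1}$ via the identification $\varphi_{1}\leftrightarrow (j^{1}\varphi)^{T}$ and the fact that the smallest algebraic group containing $C_{1}$ is its Zariski closure. Your write-up simply makes explicit the base-case identification that the paper leaves to the reader.
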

\begin{lem}
\label{lem:elpro2}
Let $G$ be a unipotent subgroup of $\diffh{}{n}$.
Then $G$ is connected and $\overline{G}^{(0)}$ is unipotent and connected.
\end{lem}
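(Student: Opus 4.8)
The plan is to localize the entire statement at each finite jet level $k$ and to prove there that $G_{k}$ is a unipotent algebraic group; the three assertions of the lemma then follow by assembling these facts over all $k$. The driving hypothesis is $G \subset \diffh{u}{n}$, which by the Remark preceding Lemma \ref{lem:elpro} is equivalent to saying that $\varphi_{k}$ is unipotent for every $\varphi \in G$ and every $k \in \mathbb{N}$. Consequently the generating set $C_{k} = \{\varphi_{k} : \varphi \in G\}$ consists entirely of unipotent elements, i.e. $C_{k} \subset G_{k,u}$.

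First I would prove that $G_{k}$ is unipotent. The set of unipotent elements of $GL(\mathfrak{m}/\mathfrak{m}^{k+1})$ is Zariski closed, being cut out by the polynomial equations $(\alpha - Id)^{N}=0$ with $N = \dim_{\mathbb{C}} \mathfrak{m}/\mathfrak{m}^{k+1}$. Now $G_{k}$ is the smallest algebraic subgroup of $GL(\mathfrak{m}/\mathfrak{m}^{k+1})$ containing $C_{k}$; since the Zariski closure of a subgroup is again a closed subgroup, $G_{k}$ coincides with the Zariski closure of $C_{k}$. As $C_{k}$ lies in the closed unipotent locus, so does its closure $G_{k}$, and therefore $G_{k} = G_{k,u}$.

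Next I would extract connectedness from the machinery already in place. By the second item of Lemma \ref{lem:elpro} we have $G_{k,u} \subset G_{k,0}$, so the chain $G_{k} = G_{k,u} \subset G_{k,0} \subset G_{k}$ forces $G_{k} = G_{k,0}$; hence $G_{k}$ is connected for every $k$. (Alternatively one could quote that a unipotent algebraic group in characteristic zero is automatically connected, but routing through Lemma \ref{lem:elpro} keeps the argument internal.)

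Finally I would read off the three conclusions. Since $G_{k} = G_{k,0}$ for all $k$, the definitions of $\overline{G}^{(0)}$ and $\overline{G}_{0}^{(0)}$ give $\overline{G}^{(0)} = \overline{G}_{0}^{(0)}$, so $\overline{G}^{(0)}$ is connected. Since $G_{k} = G_{k,u}$ for all $k$, every $\varphi \in \overline{G}^{(0)}$ satisfies $\varphi_{k} \in G_{k,u}$ for all $k$, hence is unipotent by the Remark, so $\overline{G}^{(0)}$ is unipotent. For the connectedness of $G$ in the sense of the definition, observe that $j^{1} G$ is a group of unipotent matrices in $GL(n,\mathbb{C})$; its Zariski closure lies in the closed unipotent locus and is a subgroup, hence a unipotent and therefore connected algebraic group. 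The argument has no genuine obstacle: the closedness of the unipotent locus together with Lemma \ref{lem:elpro} does all the real work, and the only point demanding a little care is the identification of $G_{k}$ with the Zariski closure of $C_{k}$ and the consistent bookkeeping across jet levels.
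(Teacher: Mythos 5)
Your proposal is correct and follows essentially the same route as the paper: both show $G_{k}$ is unipotent because the unipotent locus is cut out by the algebraic equation $(\alpha - Id)^{N}=0$ and contains $C_{k}$, then invoke the inclusion $G_{k,u}\subset G_{k,0}$ from Lemma \ref{lem:elpro} to get connectedness at every jet level, and read off unipotence of $\overline{G}^{(0)}$ from the first jet. Your added detail that $G_{k}$ equals the Zariski closure of the subgroup $C_{k}$ is a correct and slightly more explicit justification of the step the paper states directly.
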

\begin{proof}
Fix $\varphi \in \diffh{u}{n}$.
Since $\varphi_{1}$ is unipotent then
 $\varphi_{k}$ is unipotent for any $k \in {\mathbb N}$.
We deduce that
$C_{k}$ is a group of unipotent matrices for any $k \in {\mathbb N}$.

Denote $m = \dim_{\mathbb C} {\mathfrak m}/{\mathfrak m}^{k+1}$.
An element $A \in GL({\mathfrak m}/{\mathfrak m}^{k+1})$ is unipotent
if and only if it satisfies the algebraic equation $(A-Id)^{m} \equiv 0$.
In particular $G_{k}$ is unipotent. We deduce that
$G_{k}$ is the connected component of the identity of $G_{k}$
for any $k \in {\mathbb N}$  by lemma \ref{lem:elpro}. Moreover since $G_{1}$
is unipotent then $\overline{G}^{(0)}$ is unipotent.
\end{proof}
So far we are working in the jet spaces. Next we interpret the
algebraic structure of the groups $G_{k}$ ($k \in {\mathbb N}$)
associated to $G \subset  \diffh{}{n}$ in terms of Lie subalgebras
of $\hat{\mathcal X} \cn{n}$.
\begin{defi}
Let $G$ a subgroup of $\diffh{}{n}$. We say that a complex Lie subalgebra
${\mathfrak h}$ of $\hat{\mathcal X} \cn{n}$ is the Lie algebra
of $G$ if
\[ {\mathfrak h}= \{ \hat{X} \in \hat{\mathcal X} \cn{n} : {\rm exp}(t \hat{X}) \in G \
\forall t \in {\mathbb C} \} . \]
\end{defi}
\begin{defi}
\label{def:liealg}
Let $G \subset \diffh{}{n}$ be a group.
We define
\[ {\mathfrak g} = \{ X \in \hat{\mathcal X} \cn{n} : X_{k} \in {\mathfrak g}_{k} \
\forall k \in {\mathbb N} \}  \]
where ${\mathfrak g}_{k}$ is the Lie algebra of the algebraic group $G_{k}$
(see equation (\ref{equ:actvf})).
Clearly ${\mathfrak g}$ is closed in the Krull topology.
\end{defi}
The Lie algebra of a group $G \subset \diffh{}{n}$ shares
the usual properties of Lie algebras of Lie groups.
\begin{pro}
\label{pro:lie}
Let $G \subset \diffh{}{n}$ be a group.
Then ${\mathfrak g}$ is the Lie algebra of  $\overline{G}^{(0)}$.
The group $\overline{G}_{0}^{(0)}$ is generated by ${\rm exp}({\mathfrak g})$.
Moreover if $G$ is unipotent then ${\mathfrak g}$ is a Lie algebra of formal
nilpotent vector fields and
${\rm exp}: {\mathfrak g} \to \overline{G}^{(0)}$
is a bijection.
\end{pro}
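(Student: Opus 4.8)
The plan is to reduce every assertion to the finite jet levels, where $G_{k}$ is a linear algebraic group with Lie algebra ${\mathfrak g}_{k}$, and then to pass to the projective limit $\diffh{}{n}=\varprojlim D_{k}$. The basic bridge is the functoriality of the exponential: for $X\in\hat{\mathcal X}\cn{n}$ and $t\in{\mathbb C}$ one has $g\circ{\rm exp}(tX)=\sum_{j\ge 0}(t^{j}/j!)X^{j}(g)$, so that, reading modulo ${\mathfrak m}^{k+1}$, the $k$-jet action satisfies $({\rm exp}(tX))_{k}={\rm exp}(tX_{k})$. Since every $X\in\hat{\mathcal X}\cn{n}$ preserves the filtration by powers of ${\mathfrak m}$, the derivations $X_{k}$ are compatible and the matrix exponentials ${\rm exp}(tX_{k})$ form a compatible family of algebra automorphisms; hence ${\rm exp}(tX)$ is a well-defined element of $\diffh{}{n}$ for \emph{every} $X$ and $t$, the coefficients in (\ref{equ:exp}) being entries of matrix exponentials and thus entire in $t$.

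First I would prove that ${\mathfrak g}$ is the Lie algebra of $\overline{G}^{(0)}$. Using the standard characterization ${\mathfrak g}_{k}=\{A:{\rm exp}(tA)\in G_{k}\ \forall t\in{\mathbb C}\}$, valid for a linear algebraic group in characteristic $0$, I would chain equivalences: $X\in{\mathfrak g}$ holds if and only if $X_{k}\in{\mathfrak g}_{k}$ for all $k$, if and only if ${\rm exp}(tX_{k})\in G_{k}$ for all $k$ and all $t$, if and only if $({\rm exp}(tX))_{k}\in G_{k}$ for all $k$ and $t$, if and only if ${\rm exp}(tX)\in\overline{G}^{(0)}$ for all $t$. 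This is exactly the defining property of the Lie algebra of $\overline{G}^{(0)}$.

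For the generation statement I would record two level-wise facts. The transition map $\pi_{k}\colon G_{k+1,0}\to G_{k,0}$ is a surjective homomorphism of connected algebraic groups (lemma \ref{lem:elprox}), so in characteristic $0$ its differential ${\mathfrak g}_{k+1}\to{\mathfrak g}_{k}$ is onto; assembling a compatible sequence of derivations forward then shows the projection ${\mathfrak g}\to{\mathfrak g}_{k}$ is surjective for every $k$. Moreover $G_{k,0}$, being connected, is generated as an abstract group by ${\rm exp}({\mathfrak g}_{k})$, since ${\rm exp}$ is a local biholomorphism at $0$ and its image contains a Euclidean neighborhood of the identity. Writing $\Gamma=\langle{\rm exp}({\mathfrak g})\rangle$, these two facts show the image of $\Gamma$ in $G_{1,0}$ is all of $G_{1,0}$. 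Given $\varphi\in\overline{G}_{0}^{(0)}$, I would pick $w\in\Gamma$ with $w_{1}=\varphi_{1}$ and set $\psi=\varphi w^{-1}\in\overline{G}_{0}^{(0)}$; then $\psi_{1}=Id$, so $j^{1}\psi=Id$ and $\psi$ is unipotent. The key point is that $\log\psi\in\hat{\mathcal X}_{N}\cn{n}$ (prop. \ref{pro:clas}) satisfies $(\log\psi)_{k}=\log(\psi_{k})$, and $\log(\psi_{k})$, the nilpotent logarithm of a unipotent element of $G_{k}$, lies in ${\mathfrak g}_{k}$ by lemma \ref{lem:elpro}; hence $\log\psi\in{\mathfrak g}$ and $\psi={\rm exp}(\log\psi)\in{\rm exp}({\mathfrak g})$. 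Therefore $\varphi=\psi w\in\Gamma$, proving $\overline{G}_{0}^{(0)}=\Gamma$.

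The \emph{main obstacle} is precisely this last passage to the projective limit: knowing that ${\rm exp}({\mathfrak g})$ surjects onto each $G_{k,0}$ only shows that $\Gamma$ is ``dense'' in $\overline{G}_{0}^{(0)}$, and a naive level-by-level factorization need not be compatible across levels. The device that circumvents it is to absorb the entire discrepancy into a single unipotent factor $\psi$ and invoke proposition \ref{pro:clas}, so that $\psi$ is one genuine exponential rather than an unbounded product of exponentials. Finally, for the unipotent case I would use lemma \ref{lem:elpro2}: then $G_{k}=G_{k,0}$ is unipotent for every $k$, so ${\mathfrak g}_{k}$ consists of nilpotent endomorphisms and ${\rm exp}\colon{\mathfrak g}_{k}\to G_{k}$ is bijective with inverse $\log$. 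Consequently $X\in{\mathfrak g}$ forces $X_{1}$ nilpotent, that is ${\mathfrak g}\subset\hat{\mathcal X}_{N}\cn{n}$, and taking the projective limit of the compatible bijections ${\rm exp}\colon{\mathfrak g}_{k}\to G_{k}$ (injectivity jet by jet, surjectivity via $X=\log\varphi$ exactly as above) yields that ${\rm exp}\colon{\mathfrak g}\to\overline{G}^{(0)}$ is a bijection.
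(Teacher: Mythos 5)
Your proposal is correct and follows essentially the same route as the paper: identify $\mathfrak g$ with the Lie algebra of $\overline{G}^{(0)}$ jet by jet, use the surjectivity of $\theta_{k}\colon{\mathfrak g}_{k+1}\to{\mathfrak g}_{k}$ to lift a factorization of the linear part, and absorb the remaining discrepancy into a single unipotent element whose logarithm lies in $\mathfrak g$ via lemma \ref{lem:elpro}, with the unipotent case handled exactly as in the paper through lemma \ref{lem:elpro2} and proposition \ref{pro:clas}. The only cosmetic difference is that the paper lifts each exponential factor of $\varphi_{1}$ explicitly rather than choosing an arbitrary $w\in\langle{\rm exp}({\mathfrak g})\rangle$ with $w_{1}=\varphi_{1}$, which amounts to the same argument.
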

\begin{proof}
The definitions of ${\mathfrak g}$ (see def. \ref{def:liealg}) and
$\overline{G}^{(0)}$ (see def. \ref{def:liegrp}) imply that
${\mathfrak g}$ is the Lie algebra of $\overline{G}^{(0)}$.

We denote ${\mathfrak g}_{k}$ the Lie algebra of $G_{k}$.
There exists an homomorphism $\theta_{k}: {\mathfrak g}_{k+1} \to {\mathfrak g}_{k}$
that is defined analogously as $\pi_{k} : G_{k+1} \to G_{k}$.
Indeed $\theta_{k}$ is the differential of the map $\pi_{k}$ at $Id$.
Moreover since we work in characteristic $0$ the map $\theta_{k}$
is surjective (see prop. 7.2, page 105 \cite{Borel}).

Let $\varphi \in \overline{G}_{0}^{(0)}$. Since $G_{1,0}$ (see def. \ref{def:liegrp})
is connected
the set ${\rm exp}({\mathfrak g}_{1})$ generates $G_{1,0}$.
There exist $X_{1}^{1}$, $\hdots$, $X_{1}^{k}$ in ${\mathfrak g}_{1}$
such that
\[ \varphi_{1} = {\rm exp}(X_{1}^{1}) \circ \hdots \circ {\rm exp}(X_{1}^{k}) . \]
Given $X_{1}^{j}$ there exists $X^{j} \in {\mathfrak g}$ such that
$(X^{j})_{1} = X_{1}^{j}$ (see equation (\ref{equ:actvf})) for any $1 \leq j \leq k$.
It is a consequence of $\theta_{k}({\mathfrak g}_{k+1})={\mathfrak g}_{k}$ for any
$k \in {\mathbb N}$. Denote
\[ \eta  = {\rm exp}(X^{1}) \circ \hdots \circ {\rm exp}(X^{k}) . \]
The transformation $\varphi \circ \eta^{(-1)}$ has identity linear part
and so it is contained in  ${\rm exp}({\mathfrak g})$ by lemma \ref{lem:elpro}. Clearly
 $\varphi$ is a finite composition of elements in ${\rm exp}({\mathfrak g})$.

If $G$ is unipotent then $\overline{G}^{(0)}$ is unipotent by lemma \ref{lem:elpro2}.
Given an element $X$ of $\hat{\mathcal X} \cn{n} \cap {\mathfrak g}$ the derivation
$X_{1} \in {\mathfrak g}_{1}$ is nilpotent. Therefore
${\mathfrak g}$ is contained in $\hat{\mathcal X}_{N} \cn{n}$ and the mapping
${\rm exp}: {\mathfrak g} \to \overline{G}^{(0)}$ is bijective by
prop. \ref{pro:clas}.
\end{proof}
We associate $\overline{G}^{(0)}$ to a
given connected group $G \subset \diffh{}{n}$.
We want to study the derived series and the
descending central series of $\overline{G}^{(0)}$. A priori the elements
in these series are not necessarily closed in the Krull topology.
Anyway solvable
and nilpotent behavior can be interpreted at the jet level.
Thus replacing every element in the series with its closure in the
Krull topology provides a new series with essentially the same properties.
Moreover it is easier to handle since all the elements are Krull limits of
finite dimensional matrix Lie groups.
\begin{defi}
\label{def:clser}
Let $G \subset \diffh{}{n}$ be a group.
We define $\overline{G}^{(1)} $ the closed derived group of $G$ as
the closure of the derived group $[\overline{G}^{(0)},\overline{G}^{(0)}]$
in the Krull topology. We define the $(j+1)$-closed derived group
$\overline{G}^{(j+1)}$  of $G$ as the closure of
$[\overline{G}^{(j)},\overline{G}^{(j)}]$
for any $j \in {\mathbb N}$.
We define $\overline{\mathcal C}^{0} G = \overline{G}^{(0)}$ and
$\overline{\mathcal C}^{j+1} G$ as the closure of
$[\overline{\mathcal C}^{0} G,\overline{\mathcal C}^{j} G]$
for any $j \in {\mathbb N}$.
\end{defi}
So far we have associated a Lie algebra to any connected group. Next we
see that the properties of the derived series and the descending central series
for $\overline{G}^{(0)}$ and
${\mathfrak g}$ are analogous.   We introduce the
analogue of the closed derived series for Lie algebras.
\begin{defi}
Let $G \subset \diffh{}{n}$ be a group.
We define $\overline{\mathfrak g}^{(1)}$ the closed derived Lie algebra of
${\mathfrak g}$ as
the closure of the Lie algebra
${\mathfrak g}^{(1)}=[{\mathfrak g},{\mathfrak g}]$ in the Krull topology.
We define the $(j+1)$-closed derived Lie algebra $\overline{\mathfrak g}^{(j+1)}$
of ${\mathfrak g}$ as the closed derived Lie algebra of
 $\overline{\mathfrak g}^{(j)}$ for any $j \in {\mathbb N}$.
 Denote $\overline{\mathfrak g}^{(0)} = {\mathfrak g}$.
 We define $\overline{\mathcal C}^{0} {\mathfrak g} ={\mathfrak g}$ and
$\overline{\mathcal C}^{j+1} {\mathfrak g} $ as the closure of
$[\overline{\mathcal C}^{0} {\mathfrak g} ,\overline{\mathcal C}^{j} {\mathfrak g} ]$
for any $j \in {\mathbb N}$.
\end{defi}
Next lemmas are straightforward.
\begin{lem}
\label{lem:normalg}
Let $G \subset \diffh{}{n}$ be a group.
Then $\overline{G}^{(j)}$ (resp. $\overline{\mathcal C}^{j} G$)
is the closure in the Krull topology of
$(\overline{G}^{(0)})^{(j)}$ (resp.  ${\mathcal C}^{j} \overline{G}^{(0)}$)
for any $j \in {\mathbb N}$. Moreover
\[ \hdots \triangleleft \overline{G}^{(m)}
\triangleleft \hdots \triangleleft \overline{G}^{(1)} \triangleleft \overline{G}^{(0)}
\ \ {\rm and} \ \
 \hdots \triangleleft \overline{\mathcal C}^{m} G
\triangleleft \hdots \triangleleft \overline{\mathcal C}^{1} G \triangleleft
\overline{\mathcal C}^{0} G = \overline{G}^{(0)}  \]
are normal series.
\end{lem}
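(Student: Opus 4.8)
The plan is to exploit the fact that $\diffh{}{n}$ endowed with the Krull topology is a topological group: both composition and inversion are continuous, and the topology is metrizable, so that closures and continuity may be tested on sequences. Granting this, every assertion in the lemma reduces to two standard facts about topological groups together with the elementary behaviour of the abstract derived and descending central series. Recall first that the closure $\overline{H}$ of any subgroup $H$ of a topological group is again a subgroup. The whole argument is then an induction on $j$ resting on a single observation, which I will call the commutator--closure identity: for subgroups $H_{1},H_{2}$ one has
\[ \overline{[\,\overline{H_{1}},\overline{H_{2}}\,]} = \overline{[H_{1},H_{2}]}. \]

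To establish this identity, the inclusion $\supseteq$ is immediate from $[H_{1},H_{2}]\subseteq[\,\overline{H_{1}},\overline{H_{2}}\,]$. For $\subseteq$ I would use that the commutator map $(a,b)\mapsto aba^{-1}b^{-1}$ is continuous: given $a\in\overline{H_{1}}$ and $b\in\overline{H_{2}}$, pick sequences $a_{\nu}\to a$ in $H_{1}$ and $b_{\nu}\to b$ in $H_{2}$; then $[a_{\nu},b_{\nu}]\in[H_{1},H_{2}]$ converges to $[a,b]$, so $[a,b]\in\overline{[H_{1},H_{2}]}$. Since $\overline{[H_{1},H_{2}]}$ is a closed subgroup it therefore contains the subgroup $[\,\overline{H_{1}},\overline{H_{2}}\,]$ generated by all such commutators, and closing gives $\subseteq$.

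With this in hand the first assertion follows by induction. For the derived series the base case holds because $\overline{G}^{(0)}$ is already Krull closed (def. \ref{def:liegrp}), so it equals the closure of $(\overline{G}^{(0)})^{(0)}$. Assuming $\overline{G}^{(j)}=\overline{(\overline{G}^{(0)})^{(j)}}$, I apply the commutator--closure identity with $H_{1}=H_{2}=(\overline{G}^{(0)})^{(j)}$ to obtain
\[ \overline{G}^{(j+1)} = \overline{[\,\overline{G}^{(j)},\overline{G}^{(j)}\,]} = \overline{[(\overline{G}^{(0)})^{(j)},(\overline{G}^{(0)})^{(j)}]} = \overline{(\overline{G}^{(0)})^{(j+1)}}. \]
The descending central series is handled identically, now fixing $H_{1}=\overline{G}^{(0)}$ (which is closed) and taking $H_{2}=\mathcal{C}^{j}\overline{G}^{(0)}$, so that $\overline{\mathcal C}^{j+1}G=\overline{[\overline{G}^{(0)},\mathcal{C}^{j}\overline{G}^{(0)}]}=\overline{\mathcal{C}^{j+1}\overline{G}^{(0)}}$.

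Finally, the normality of the series is obtained by combining the classical fact that the abstract derived and descending central subgroups are normal --- $(\overline{G}^{(0)})^{(j+1)}\triangleleft(\overline{G}^{(0)})^{(j)}$ and $\mathcal{C}^{j+1}\overline{G}^{(0)}\triangleleft\mathcal{C}^{j}\overline{G}^{(0)}$ --- with the observation that closure preserves normality: if $N\triangleleft H$ then $\overline{N}\triangleleft\overline{H}$, again proved by a sequence argument using continuity of conjugation. I expect the only real content to be the commutator--closure identity, whose proof hinges on the continuity of the group operations in the Krull topology; once that is secured the remaining steps are routine topological-group bookkeeping, which is presumably why the authors call the lemma straightforward. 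The point to watch is that $[\,\overline{H_{1}},\overline{H_{2}}\,]$ is the subgroup \emph{generated by} commutators rather than the set of commutators, so one genuinely needs the subgroup-closure fact to pass from the generators to the whole group.
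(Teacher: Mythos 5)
Your proof is correct: the paper gives no argument for this lemma (it is introduced only by ``Next lemmas are straightforward''), and your route --- the commutator--closure identity $\overline{[\,\overline{H_{1}},\overline{H_{2}}\,]}=\overline{[H_{1},H_{2}]}$ combined with the facts that the closure of a subgroup is a subgroup and that closure preserves normality --- is precisely the standard topological-group bookkeeping the authors are alluding to. The one hypothesis you rightly isolate, continuity of composition and inversion in the Krull topology on \diffh{}{n}, does hold because both operations factor through each finite jet space ${\mathfrak m}/{\mathfrak m}^{k+1}$, and the Krull topology is first countable, so your sequence arguments are legitimate.
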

\begin{lem}
\label{lem:normal}
Let $G \subset \diffh{}{n}$ be a group.
We have that $\overline{\mathfrak g}^{(j)}$ (resp. $\overline{\mathcal C}^{j} {\mathfrak g}$)
is the closure in the Krull topology
of ${\mathfrak g}^{(j)}$ (resp. ${\mathcal C}^{j} {\mathfrak g}$)  for any $j \in {\mathbb N}$.
Moreover
\[ \hdots \triangleleft \overline{\mathfrak g}^{(m)}
\triangleleft \hdots \triangleleft \overline{\mathfrak g}^{(1)}
\triangleleft \overline{\mathfrak g}^{(0)} = {\mathfrak g}
\ \ {\rm and} \ \
\hdots \triangleleft \overline{\mathcal C}^{m} {\mathfrak g}
\triangleleft \hdots \triangleleft \overline{\mathcal C}^{1} {\mathfrak g}
\triangleleft \overline{\mathcal C}^{0} {\mathfrak g} = {\mathfrak g}  \]
are normal series.
\end{lem}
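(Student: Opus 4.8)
The plan is to reproduce the argument underlying the group statement of Lemma \ref{lem:normalg} at the infinitesimal level, with the Lie bracket playing the role of the group commutator. The only analytic input needed is that the bracket
\[ [\cdot,\cdot] : \hat{\mathcal X} \cn{n} \times \hat{\mathcal X} \cn{n} \to \hat{\mathcal X} \cn{n} \]
is jointly continuous in the Krull topology. First I would check this from the explicit formula for the bracket: since differentiation lowers the order by one while the coefficients lie in ${\mathfrak m}$, the $k$-jet of $[X,Y]$ is determined by the $k$-jets of $X$ and $Y$; equivalently $[X,Y]_{k}=[X_{k},Y_{k}]$ in $\mathrm{End}({\mathfrak m}/{\mathfrak m}^{k+1})$ (cf. equation (\ref{equ:actvf})). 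Continuity follows.

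From continuity I would extract two routine consequences. The closure of a Lie subalgebra is a Lie subalgebra (the vector space operations and the bracket all pass to the closure). Moreover, if ${\mathfrak a}$ is an ideal of a subalgebra ${\mathfrak b}$, then $\overline{\mathfrak a}$ is an ideal of $\overline{\mathfrak b}$: for $X\in\overline{\mathfrak b}$ and $Y\in\overline{\mathfrak a}$ write $X=\lim X_{m}$, $Y=\lim Y_{m}$ with $X_{m}\in{\mathfrak b}$, $Y_{m}\in{\mathfrak a}$, so that $[X,Y]=\lim[X_{m},Y_{m}]\in\overline{\mathfrak a}$. The core computation is the identity $\overline{[\overline{\mathfrak h},\overline{\mathfrak l}]}=\overline{[{\mathfrak h},{\mathfrak l}]}$ for arbitrary Lie subalgebras ${\mathfrak h},{\mathfrak l}$. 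One inclusion is immediate from ${\mathfrak h}\subseteq\overline{\mathfrak h}$, ${\mathfrak l}\subseteq\overline{\mathfrak l}$. For the other, every generator $[X,Y]$ of $[\overline{\mathfrak h},\overline{\mathfrak l}]$ is a limit of brackets from $[{\mathfrak h},{\mathfrak l}]$, hence lies in the closed subalgebra $\overline{[{\mathfrak h},{\mathfrak l}]}$; as the latter contains all generators of $[\overline{\mathfrak h},\overline{\mathfrak l}]$, it contains $[\overline{\mathfrak h},\overline{\mathfrak l}]$ and, being closed, its closure.

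With this identity the first assertion follows by induction on $j$, the base case being ${\mathfrak g}=\overline{\mathfrak g}$ (closed by def.\ \ref{def:liealg}). Assuming $\overline{\mathfrak g}^{(j)}=\overline{{\mathfrak g}^{(j)}}$ I would write
\[ \overline{\mathfrak g}^{(j+1)} = \overline{[\overline{\mathfrak g}^{(j)},\overline{\mathfrak g}^{(j)}]} = \overline{[\overline{{\mathfrak g}^{(j)}},\overline{{\mathfrak g}^{(j)}}]} = \overline{[{\mathfrak g}^{(j)},{\mathfrak g}^{(j)}]} = \overline{{\mathfrak g}^{(j+1)}} , \]
and, assuming $\overline{\mathcal C}^{j}{\mathfrak g}=\overline{{\mathcal C}^{j}{\mathfrak g}}$,
\[ \overline{\mathcal C}^{j+1}{\mathfrak g} = \overline{[{\mathfrak g},\overline{\mathcal C}^{j}{\mathfrak g}]} = \overline{[{\mathfrak g},\overline{{\mathcal C}^{j}{\mathfrak g}}]} = \overline{[{\mathfrak g},{\mathcal C}^{j}{\mathfrak g}]} = \overline{{\mathcal C}^{j+1}{\mathfrak g}} , \]
using the identity with $({\mathfrak h},{\mathfrak l})=({\mathfrak g}^{(j)},{\mathfrak g}^{(j)})$ and $({\mathfrak g},{\mathcal C}^{j}{\mathfrak g})$ respectively.

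For the normal series I would invoke the purely algebraic facts that ${\mathfrak g}^{(j+1)}$ is an ideal of ${\mathfrak g}^{(j)}$ and that ${\mathcal C}^{j+1}{\mathfrak g}$ is an ideal of ${\mathfrak g}$ (hence of ${\mathcal C}^{j}{\mathfrak g}$), and then apply the second consequence above to upgrade these to ideal relations between the closures, giving $\overline{\mathfrak g}^{(j+1)}\triangleleft\overline{\mathfrak g}^{(j)}$ and $\overline{\mathcal C}^{j+1}{\mathfrak g}\triangleleft\overline{\mathcal C}^{j}{\mathfrak g}$. The hard part is nothing more than pinning down the continuity of the bracket; once that is in place everything else is formal, which is why the statement is labeled straightforward.
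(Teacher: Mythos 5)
Your argument is correct: the paper offers no proof of this lemma (it is dismissed with ``Next lemmas are straightforward''), and your elaboration --- joint continuity of the bracket in the Krull topology via $[X,Y]_{k}=[X_{k},Y_{k}]$, the identity $\overline{[\overline{\mathfrak h},\overline{\mathfrak l}]}=\overline{[{\mathfrak h},{\mathfrak l}]}$, induction on $j$, and the upgrade of ideal relations to closures --- is exactly the intended route and fills in the details cleanly.
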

\begin{pro}
\label{pro:lieder}
Let $G \subset \diffh{}{n}$ be a connected group.
Then
\begin{itemize}
\item $\overline{\mathfrak g}^{(n)}$ is the Lie algebra of
$\overline{G}^{(n)} \ \forall n \in {\mathbb N}$.
 \item $\overline{\mathcal C}^{n} {\mathfrak g}$ is the Lie algebra of
$\overline{\mathcal C}^{n} G \ \forall n \in {\mathbb N}$.
\end{itemize}
\end{pro}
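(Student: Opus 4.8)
The plan is to prove both statements at once by reducing them to classical facts about the connected linear algebraic groups $G_{k}$ at every finite jet level, and then to pass to the Krull limit exactly as in Proposition \ref{pro:lie}. Since $G$ is connected, Corollary \ref{cor:elpro3} guarantees that $G_{k}$ is connected for every $k \in {\mathbb N}$, which is precisely the hypothesis under which the derived and descending central series of a linear algebraic group in characteristic $0$ are well-behaved. Concretely, for the connected group $G_{k}$ with Lie algebra $\mathfrak{g}_{k}$ the commutator subgroups $(G_{k},G_{k})$ and $(G_{k},\mathcal{C}^{j}G_{k})$ are closed and connected, with Lie algebras $[\mathfrak{g}_{k},\mathfrak{g}_{k}]$ and $[\mathfrak{g}_{k},\mathcal{C}^{j}\mathfrak{g}_{k}]$ respectively (see \cite{Borel}, \cite{Humphreys}). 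Iterating, $(G_{k})^{(n)}$ and $\mathcal{C}^{n}G_{k}$ are closed connected subgroups whose Lie algebras are $(\mathfrak{g}_{k})^{(n)}$ and $\mathcal{C}^{n}\mathfrak{g}_{k}$; because we work in characteristic $0$, these closed algebraic subgroups coincide with the abstract derived (resp. descending central) subgroups.

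Next I would transport this to the limit. The projection $\varphi \mapsto \varphi_{k}$ is a surjective group homomorphism $\overline{G}^{(0)} \to G_{k}$ (surjectivity comes from the surjectivity of the transition maps $\pi_{k}$), and $X \mapsto X_{k}$ is a surjective Lie algebra homomorphism $\mathfrak{g} \to \mathfrak{g}_{k}$ (surjectivity of $\theta_{k}$, as in the proof of Proposition \ref{pro:lie}). A surjective homomorphism carries the abstract derived and descending central series onto those of the image, so $((\overline{G}^{(0)})^{(n)})_{k} = (G_{k})^{(n)}$ and $(\mathfrak{g}^{(n)})_{k} = (\mathfrak{g}_{k})^{(n)}$, and similarly for the central series. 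By Lemmas \ref{lem:normalg} and \ref{lem:normal} the objects $\overline{G}^{(n)}$ and $\overline{\mathfrak{g}}^{(n)}$ are the Krull closures of the corresponding abstract series, and the Krull closure of a subset $S$ is $\{\xi : \xi_{k} \in S_{k}\ \forall k\}$. Hence
\[ \overline{G}^{(n)} = \{ \varphi \in \diffh{}{n} : \varphi_{k} \in (G_{k})^{(n)} \ \forall k \in {\mathbb N} \}, \quad \overline{\mathfrak g}^{(n)} = \{ X \in \hat{\mathcal X}\cn{n} : X_{k} \in (\mathfrak g_{k})^{(n)} \ \forall k \in {\mathbb N} \} , \]
with the analogous descriptions for $\overline{\mathcal C}^{n} G$ and $\overline{\mathcal C}^{n} \mathfrak g$.

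Finally I would identify $\overline{\mathfrak{g}}^{(n)}$ as the Lie algebra of $\overline{G}^{(n)}$ exactly as in Proposition \ref{pro:lie}. If $X \in \overline{\mathfrak{g}}^{(n)}$ then $X \in \mathfrak{g}$, so $\exp(tX)$ is a well-defined element of $\overline{G}^{(0)}$ with $(\exp(tX))_{k} = \exp(t X_{k})$; since $X_{k} \in (\mathfrak{g}_{k})^{(n)} = \mathrm{Lie}((G_{k})^{(n)})$ and $(G_{k})^{(n)}$ is connected, we get $\exp(t X_{k}) \in (G_{k})^{(n)}$ for all $t$, whence $\exp(tX) \in \overline{G}^{(n)}$. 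Conversely, if $\exp(tX) \in \overline{G}^{(n)} \subset \overline{G}^{(0)}$ for all $t$, then $X \in \mathfrak{g}$ and $\exp(t X_{k}) \in (G_{k})^{(n)}$ for all $t$, so $X_{k}$ lies in the Lie algebra $(\mathfrak{g}_{k})^{(n)}$ of the connected group $(G_{k})^{(n)}$ for every $k$, i.e. $X \in \overline{\mathfrak{g}}^{(n)}$. This settles the first item; the identical argument with $(G_{k})^{(n)}$ and $(\mathfrak{g}_{k})^{(n)}$ replaced by $\mathcal{C}^{n}G_{k}$ and $\mathcal{C}^{n}\mathfrak{g}_{k}$ settles the second.

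The step I expect to be delicate is the level-$k$ correspondence between group commutators and Lie brackets, namely that $(G_{k})^{(n)}$ and $\mathcal{C}^{n}G_{k}$ are closed and connected with the predicted Lie algebras. This is exactly where characteristic $0$ and the connectedness of $G_{k}$ (Corollary \ref{cor:elpro3}) are indispensable: the identity $\mathrm{Lie}((A,B)) = [\mathrm{Lie}(A),\mathrm{Lie}(B)]$ needs $A$ or $B$ to be normal, which for the descending central series is supplied by the normality of $\overline{\mathcal{C}}^{n-1}G$ (Lemma \ref{lem:normalg}) and its jet-level counterpart. Once these classical facts are granted, the only remaining care is bookkeeping in the Krull topology, handled by the inverse-limit description of closures together with the surjectivity of the transition maps $\pi_{k}$ and $\theta_{k}$.
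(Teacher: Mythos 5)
Your proposal follows essentially the same route as the paper: connectedness of each $G_{k}$ (Corollary \ref{cor:elpro3}) plus the characteristic-$0$ correspondence between the derived/central series of $G_{k}$ and of ${\mathfrak g}_{k}$, then the surjectivity of $\pi_{k}$ and $\theta_{k}$ together with Lemmas \ref{lem:normalg} and \ref{lem:normal} to identify $\overline{G}^{(n)}$ and $\overline{\mathfrak g}^{(n)}$ with the inverse-limit sets $\{\varphi : \varphi_{k}\in (G_{k})^{(n)}\}$ and $\{X : X_{k}\in ({\mathfrak g}_{k})^{(n)}\}$. The only difference is that you spell out the final exponential argument identifying the Lie algebra, which the paper dismisses as clear; the argument is correct.
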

\begin{proof}
Let us prove the first property. The proof of the second property is analogous.

We denote ${\mathfrak g}_{k}$ the Lie algebra of $G_{k}$.
Since we work in characteristic $0$ and $G_{k}$ is a connected algebraic group
(cor. \ref{cor:elpro3})
we have that ${\mathfrak g}_{k}^{(j)}$ is the Lie algebra of $G_{k}^{(j)}$ for any
$j \in {\mathbb N}$ (prop. 7.8, page 108, \cite{Borel}).

Denote
\[ \tilde{G}^{(j)} = \{
\varphi \in \diffh{}{n} : \varphi_{k} \in G_{k}^{(j)} \ \forall k \in {\mathbb N} \}  \]
and
\[ \tilde{\mathfrak g}^{(j)} = \{
X \in \hat{\mathcal X} \cn{n} : X_{k} \in {\mathfrak g}_{k}^{(j)} \ \forall k \in {\mathbb N} \}.  \]
It is clear that $\tilde{\mathfrak g}^{(j)}$ is the Lie algebra of $\tilde{G}^{(j)}$
for any $j \in {\mathbb N} \cup \{0\}$. It suffices to prove that
$\overline{G}^{(j)} = \tilde{G}^{(j)}$ and
$\overline{\mathfrak g}^{(j)} =  \tilde{\mathfrak g}^{(j)}$  for any $j \in {\mathbb N} \cup \{0\}$.
It is obvious that $\overline{G}^{(j)} \subset \tilde{G}^{(j)}$ and
$\overline{\mathfrak g}^{(j)} \subset  \tilde{\mathfrak g}^{(j)}$ for any $j \in {\mathbb N} \cup \{0\}$.
Let us show the other contentions.

Denote $H = \overline{G}^{(0)}$.
We know that the morphism $\pi_{k} : G_{k+1} \to G_{k}$ is surjective for any
$k \in {\mathbb N}$.  Thus given $\alpha \in G_{k}$ there exists
$\varphi \in \overline{G}^{(0)}$ such that $\varphi_{k} = \alpha$.
As a consequence
$G_{k}^{(j)} = ({H}^{(j)})_{k}$ for all $k \geq 1$ and $j \geq 0$.
We deduce that an element of $\tilde{G}^{(j)}$ is limit in the Krull topology
of elements in ${H}^{(j)}$ for any $j \geq 0$.
Since the closure of $H^{(j)}$ in the Krull topology is $\overline{G}^{(j)}$
(lemma \ref{lem:normalg})
we obtain $\tilde{G}^{(j)} \subset \overline{G}^{(j)}$ for any $j \geq 0$.

Consider the homomorphism $\theta_{k}: {\mathfrak g}_{k+1} \to {\mathfrak g}_{k}$
defined in the proof of prop. \ref{pro:lie}.
Since $\theta_{k} ({\mathfrak g}_{k+1})= {\mathfrak g}_{k}$ for any  $k \in {\mathbb N}$
we can proceed analogously to show
$\tilde{\mathfrak g}^{(j)} \subset \overline{\mathfrak g}^{(j)}$ for any $j \geq 0$.
%
%
%
\end{proof}
\section{Soluble lengths of connected and unipotent groups}
\label{sec:sollen}
In this section we provide upper bounds for the soluble length of
connected, unipotent and nilpotent subgroups of
$\diffh{}{n}$. We can reduce the problem to an analogous but
simpler one on Lie algebras by using proposition \ref{pro:lieder}.
\begin{pro}
\label{pro:mainc}
Let $G \subset \diffh{}{n}$ be a connected solvable group.
Then we have $l(G) \leq 2n$.
\end{pro}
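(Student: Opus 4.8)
The plan is to transport the problem entirely to the Lie algebra side, where the structure theory is cleaner, and then bound $l(\mathfrak{g})$ by $2n$. By Proposition \ref{pro:lieder} the closed derived series $\{\overline{\mathfrak{g}}^{(j)}\}$ is exactly the sequence of Lie algebras of the closed derived groups $\{\overline{G}^{(j)}\}$. Hence if I can show that $\overline{\mathfrak{g}}^{(2n)} = \{0\}$, then $\overline{G}^{(2n)}$ has trivial Lie algebra and, being connected (as a group built from the connected $G_{k,0}$ via Lemma \ref{lem:elpro} and Corollary \ref{cor:elpro3}), must itself be trivial. Since $l(G) = l(\overline{G}^{(0)})$ by Lemma \ref{lem:elpro}, and $\overline{G}^{(2n)}$ is the Krull closure of $(\overline{G}^{(0)})^{(2n)}$ by Lemma \ref{lem:normalg}, triviality of the closure forces $(\overline{G}^{(0)})^{(2n)} = \{Id\}$, giving $l(G) \leq 2n$. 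So the entire content reduces to the statement that a solvable Lie subalgebra $\mathfrak{g}$ of $\hat{\mathcal{X}}\cn{n}$ satisfies $l(\mathfrak{g}) \leq 2n$, i.e. essentially Theorem \ref{teo:mainca}.

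To bound $l(\mathfrak{g})$, the natural tool is the Lie--Chevalley theorem at each jet level: for each $k$, the Lie algebra $\mathfrak{g}_k$ of the connected solvable algebraic group $G_k$ (connected by Corollary \ref{cor:elpro3}) acts on the finite-dimensional space $\mathfrak{m}/\mathfrak{m}^{k+1}$ and can be simultaneously triangularized. Passing to the projective limit, I would look for a common formal coordinate structure that makes the vector fields in $\mathfrak{g}$ simultaneously ``triangular'' with respect to the weighted filtration by the variables $x_1,\ldots,x_n$. The approach suggested by the introduction is to interpret the vector fields as acting on finite-dimensional spaces of meromorphic first integrals of Lie subalgebras appearing in the derived series, and to extract from Lie's theorem the pattern by which the derived series descends. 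Each of the $n$ coordinates should contribute at most $2$ to the derived length — one step to kill the ``eigenvalue'' (semisimple/diagonal) part of the action on that coordinate, and one further step to kill the resulting unipotent (tangent-to-identity) remainder — which heuristically accounts for the bound $2n$.

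The main obstacle I expect is the passage from the finite-jet triangularizations to a coherent statement about the infinite-dimensional Lie algebra $\mathfrak{g} \subset \hat{\mathcal{X}}\cn{n}$: the simultaneous triangular bases furnished by Lie's theorem at level $k$ must be chosen compatibly under the projection maps $\theta_k \colon \mathfrak{g}_{k+1} \to \mathfrak{g}_k$ so that they glue into genuine formal first integrals or semi-invariants. Controlling how the solvable length is consumed coordinate by coordinate — rather than merely obtaining a bound growing with $k$ — is the crux, and is presumably where the careful bookkeeping of the derived series patterns from Lie's theorem, combined with the surjectivity $\theta_k(\mathfrak{g}_{k+1}) = \mathfrak{g}_k$, does the real work. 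Once $l(\mathfrak{g}) \leq 2n$ is established, the reduction in the first paragraph closes the argument.
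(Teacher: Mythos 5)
Your first paragraph — the reduction of the group statement to the bound $l({\mathfrak g})\leq 2n$ for a solvable Lie subalgebra ${\mathfrak g}\subset\hat{\mathcal X}\cn{n}$ via Proposition \ref{pro:lieder}, Lemma \ref{lem:elpro} and Lemma \ref{lem:normalg} — is correct and is exactly the paper's reduction (the paper deduces Proposition \ref{pro:mainc} from Propositions \ref{pro:lieder}, \ref{pro:lie} and \ref{pro:alg}). The problem is everything after that: what you offer for the Lie-algebra bound is a strategy with an acknowledged hole, not an argument. The sentence ``each coordinate contributes at most $2$, one step for the semisimple part and one for the unipotent remainder'' is the right heuristic, but you give no mechanism that converts it into the inequality, and you explicitly defer the crux (``presumably where the careful bookkeeping \dots does the real work'') to an unexecuted compatibility argument between jet-level triangularizations.

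The missing idea is the following, and it deliberately avoids the jet-compatibility issue you flag. One introduces the generic rank $\kappa(p)=\dim_{\hat{K}_{n}}\bigl(\overline{\mathfrak g}^{(p)}\otimes_{\mathbb C}\hat{K}_{n}\bigr)\leq n$ and proves that $\kappa(p)>0$ implies $\kappa(p+2)<\kappa(p)$ (Proposition \ref{pro:intd}); the bound $l({\mathfrak g})\leq 2n$ is then immediate. The drop is obtained by working directly over the fraction field $\hat{K}_{n}$, not at the jet level: once the rank drops from $p$ to $p+1$, one completes a basis $Y_{1},\hdots,Y_{q}$ of $\overline{\mathfrak g}^{(p+1)}\otimes\hat{K}_{n}$ to a basis $Y_{1},\hdots,Y_{q},X_{1},\hdots,X_{m}$ of $\overline{\mathfrak g}^{(p)}\otimes\hat{K}_{n}$, and the normality relations $[\overline{\mathfrak g}^{(p+1)},\overline{\mathfrak g}^{(p)}]\subset\overline{\mathfrak g}^{(p+1)}$ and $[\overline{\mathfrak g}^{(p)},\overline{\mathfrak g}^{(p)}]\subset\overline{\mathfrak g}^{(p+1)}$ force the transverse coefficients $a_{1},\hdots,a_{m}$ of any $Z\in\overline{\mathfrak g}^{(p)}$ to be first integrals of $\overline{\mathfrak g}^{(p)}$ (Lemma \ref{lem:aux}). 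This yields a genuine finite-dimensional representation $Z\mapsto M_{Z}=(X_{j}(a_{k}))$ with entries in the field ${\mathcal M}_{p}$ of first integrals, which is a Lie algebra homomorphism (Lemma \ref{lem:sol}); Lie's theorem applied over the algebraic closure $\overline{\mathcal M}_{p}$ triangularizes it, and the contradiction $\kappa(p-2)=\kappa(p)>\kappa(p+1)$ is then extracted from the vanishing of the diagonal on the second derived algebra. Without this construction — in particular without the observation that the $a_{k}$ are first integrals, which is what makes the representation finite dimensional and well defined — your outline does not close, and the route you propose instead (gluing the triangularizing bases of ${\mathfrak g}_{k}$ across $k$) is precisely the difficulty the paper's argument is designed to sidestep.
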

\begin{pro}
\label{pro:main1}
Let $G \subset \diffh{}{n}$ be a unipotent solvable group.
Then we have $l(G) \leq 2n-1$.
\end{pro}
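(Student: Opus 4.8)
The plan is to transfer the problem from the group $G$ to its associated Lie algebra of nilpotent formal vector fields, and there to prove the sharper estimate $l({\mathfrak g}) \leq 2n-1$.

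First I would carry out the group-to-Lie-algebra reduction using the results of Section \ref{sec:lie}. Since $G$ is unipotent, Lemma \ref{lem:elpro2} shows that $G$ is connected and that $\overline{G}^{(0)}$ is unipotent and connected, while Lemma \ref{lem:elpro} gives $l(G)=l(\overline{G}^{(0)})$. By Proposition \ref{pro:lie} the associated Lie algebra ${\mathfrak g}$ is contained in $\hat{\mathcal X}_{N}\cn{n}$ and ${\rm exp}:{\mathfrak g}\to\overline{G}^{(0)}$ is a bijection. As $G$ is connected, Proposition \ref{pro:lieder} identifies $\overline{\mathfrak g}^{(k)}$ with the Lie algebra of $\overline{G}^{(k)}$ for every $k$. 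Combining this with Lemmas \ref{lem:normalg} and \ref{lem:normal}, the group $\overline{G}^{(k)}$ is the Krull closure of $(\overline{G}^{(0)})^{(k)}$ and $\overline{\mathfrak g}^{(k)}$ is the closure of ${\mathfrak g}^{(k)}$; since a unipotent group is trivial exactly when its nilpotent Lie algebra is trivial, we obtain the equivalences $(\overline{G}^{(0)})^{(k)}=\{Id\} \Leftrightarrow \overline{G}^{(k)}=\{Id\} \Leftrightarrow \overline{\mathfrak g}^{(k)}=\{0\} \Leftrightarrow {\mathfrak g}^{(k)}=\{0\}$. Hence $l(G)=l(\overline{G}^{(0)})=l({\mathfrak g})$, and everything reduces to bounding the soluble length of an arbitrary solvable ${\mathfrak g}\subset\hat{\mathcal X}_{N}\cn{n}$ by $2n-1$.

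Next I would prove this Lie-algebra estimate by induction on $n$, following the scheme announced for this section. For $n=1$ the bracket identity $[a\px,b\px]=(ab'-a'b)\px$ shows that two fields commute precisely when they are proportional, and that the bracket of two non-proportional nilpotent fields has strictly larger multiplicity than either factor; a short argument then forces a solvable subalgebra of $\hat{\mathcal X}_{N}\cn{1}$ to be abelian, so $l({\mathfrak g})\leq 1=2\cdot 1-1$. For the inductive step I would pick an appropriate term ${\mathfrak g}^{(j)}$ of the derived series and consider the finite-dimensional spaces of meromorphic first integrals of this ideal: since ${\mathfrak g}^{(j)}$ is normal, ${\mathfrak g}$ acts on such first integrals by the Lie derivative, producing solvable Lie algebras of matrices to which I would apply Lie's theorem. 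The resulting triangular pattern forces the derived series of ${\mathfrak g}$ to descend through a flag in which each pair of consecutive derived steps lowers the transcendence degree of the relevant field of first integrals by at least one, so that the number of nontrivial derived steps is controlled by the $n$ available dimensions. The gain of one step in the nilpotent case is not incidental: the estimate $l({\mathfrak g})\leq 2n-1$ for ${\mathfrak g}\subset\hat{\mathcal X}_{N}\cn{n}$ is in fact the foundational statement, since for a general solvable ${\mathfrak g}\subset\hat{\mathcal X}\cn{n}$ Lie's theorem applied to the linear parts $j^{1}{\mathfrak g}$ gives ${\mathfrak g}^{(1)}\subset\hat{\mathcal X}_{N}\cn{n}$ and hence $l({\mathfrak g})\leq 1+l({\mathfrak g}^{(1)})\leq 2n$, recovering theorem \ref{teo:mainca}. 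The induction must therefore be arranged to deliver $2n-1$ directly, the spare step being exactly the one that a non-nilpotent algebra needs in order to make its linear parts nilpotent.

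The main obstacle is precisely this inductive step. The difficulties are intrinsically infinite-dimensional: ${\mathfrak g}$ need not be finitely generated, the first integrals have to be manipulated as formal meromorphic objects, and one must verify that the induced actions on the finite-dimensional spaces of first integrals are again solvable matrix Lie algebras whose lengths close the induction. The sharp bookkeeping—that each reduction of the ambient dimension costs exactly two derived steps and that the nilpotent hypothesis removes precisely one of them at the top of the flag—is the delicate point, whereas the reduction in the first paragraph is routine once the machinery of Section \ref{sec:lie} is in place.
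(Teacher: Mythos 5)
Your first paragraph is correct and coincides with the paper's own reduction: unipotence gives connectedness (lemma \ref{lem:elpro2}), $l(G)=l(\overline{G}^{(0)})$ (lemma \ref{lem:elpro}), and propositions \ref{pro:lie} and \ref{pro:lieder} transfer the problem to a solvable Lie subalgebra of $\hat{\mathcal X}_{N}\cn{n}$. The high-level strategy you then describe --- act on meromorphic first integrals, triangularize via Lie's theorem, pay two derived steps per unit of dimension, save one step from nilpotency --- is also the paper's. But the proposal stops exactly where the proof begins: you yourself flag the inductive step as ``the main obstacle'' and ``the delicate point'' and do not supply it, so the statement is not proved. Concretely, the decreasing invariant must be made precise: the paper uses $\kappa(p)=\dim_{\hat{K}_{n}}(\overline{\mathfrak g}^{(p)}\otimes_{\mathbb C}\hat{K}_{n})$ and proves (prop.\ \ref{pro:intd}) that $\kappa(p)>0$ implies $\kappa(p+2)<\kappa(p)$. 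This rests on lemmas \ref{lem:aux} and \ref{lem:aux2} (the coefficients $a_{k}$ of an element of $\overline{\mathfrak g}^{(r)}$ along the complementary basis $X_{1},\hdots,X_{m}$ are first integrals of the relevant subalgebras, a consequence of normality of the closed derived series), on the verification that $Z\mapsto M_{Z}$ is a Lie algebra homomorphism (lemma \ref{lem:sol}), and on extending the derivations to the algebraic closure $\overline{\mathcal M}_{p}$ before applying Lie's theorem. None of this is routine bookkeeping; it is the proof.

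Second, and decisive for getting $2n-1$ rather than $2n$: the only place unipotence enters is lemma \ref{lem:eigen}, which shows that $X\in\hat{\mathcal X}_{N}\cn{n}$ and $X(h)=\lambda h$ with $0\neq h\in\hat{K}_{n}$ force $\lambda=0$. This kills the diagonal of the triangularized matrix algebra already at level $p=0$ when ${\mathcal M}_{0}={\mathbb C}$, yielding $\kappa(1)<\kappa(0)$ whenever $\kappa(0)=n$ (prop.\ \ref{pro:maxd} and cor.\ \ref{cor:maxd}); that is the entire source of the saved step. You correctly locate where the gain must occur but give no argument for it. A smaller issue: your $n=1$ base case is under-argued --- the observation that brackets of non-proportional nilpotent fields have larger multiplicity does not by itself force a solvable subalgebra of $\hat{\mathcal X}_{N}\cn{1}$ to be abelian (the non-solvable algebra generated by $x^{2}\px$ and $x^{3}\px$ enjoys the same multiplicity growth); one must argue on the last nonzero term of the derived series and use the multiplicity count to annihilate the eigenvalue, which is precisely the one-variable case of lemma \ref{lem:eigen}. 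Note finally that the paper does not induct on $n$: it analyzes the single non-increasing sequence $\kappa(0),\kappa(1),\hdots$ directly, avoiding any need to realize quotients or restrictions of $\mathfrak g$ inside a lower-dimensional $\hat{\mathcal X}\cn{n-1}$, a step your induction would additionally have to justify.
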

\begin{pro}
\label{pro:main2}
Let $G \subset \diffh{}{n}$ be a nilpotent group.
Then we have $l(G) \leq n$.
\end{pro}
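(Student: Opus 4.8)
The plan is to reduce the statement to an assertion about the associated Lie algebra and then to argue by induction on the dimension $n$. First I would pass from $G$ to $\overline{G}^{(0)}$, which has the same soluble length by lemma \ref{lem:elpro}. Since $G$ is nilpotent each $G_{k}$ is a nilpotent algebraic group (the Zariski closure of a nilpotent group is nilpotent), so $\overline{G}^{(0)}$ is nilpotent. In a nilpotent algebraic group the semisimple elements are central; hence $\overline{G}_{s}^{(0)}$ lies in the centre of $\overline{G}^{(0)}$ and, using the Jordan--Chevalley decomposition of definition \ref{def:liegrp2}, one gets an internal direct product $\overline{G}^{(0)}=\overline{G}_{s}^{(0)}\times\overline{G}_{u}^{(0)}$ with $\overline{G}_{s}^{(0)}$ central. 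Because a central factor does not contribute to commutators we obtain $(\overline{G}^{(0)})^{(j)}=(\overline{G}_{u}^{(0)})^{(j)}$ for every $j\ge 1$, so that $l(\overline{G}^{(0)})\le\max\{1,l(\overline{G}_{u}^{(0)})\}$ and it suffices to bound $l(\overline{G}_{u}^{(0)})$ by $n$. By lemma \ref{lem:elpro2} the group $\overline{G}_{u}^{(0)}$ is connected and unipotent, and by propositions \ref{pro:clas}, \ref{pro:lie} and \ref{pro:lieder} its soluble length equals $l(\mathfrak g)$, where $\mathfrak g\subset\hat{\mathcal X}\cn{n}$ is a nilpotent Lie algebra consisting of formal nilpotent vector fields. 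Thus the whole problem reduces to proving $l(\mathfrak g)\le n$ for such a Lie algebra.

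Next I would induct on $n$. For $n=1$ a nilpotent Lie algebra of nilpotent vector fields on \cn{1} is abelian, so $l(\mathfrak g)\le 1$. For the inductive step I note that, since every element of $\mathfrak g$ is a nilpotent vector field, the linear part $\mathfrak g_{1}$ is a Lie algebra of nilpotent matrices; by Engel's theorem the elements of $\mathfrak g_{1}$ have a common eigenvector, which produces a $\mathfrak g$-invariant direction and, after integration, a formal first integral and an invariant fibration $\cn{n}\to\cn{n-1}$ with one-dimensional fibres. Let $\mathfrak h$ be the ideal of $\mathfrak g$ of vector fields tangent to the fibres and let $\mathfrak g/\mathfrak h$ be the induced nilpotent Lie algebra of nilpotent vector fields on the base \cn{n-1}. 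A fibre-tangent field is a nilpotent field in one variable (with parameters in the base), so $\mathfrak h$ is abelian and $l(\mathfrak h)\le 1$, while $l(\mathfrak g/\mathfrak h)\le n-1$ by the induction hypothesis. Using the standard inequality $l(\mathfrak g)\le l(\mathfrak g/\mathfrak h)+l(\mathfrak h)$ for an ideal, I obtain $l(\mathfrak g)\le (n-1)+1=n$.

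The engine behind the inductive step is exactly the mechanism announced in the introduction: the vector fields of $\mathfrak g$ act linearly on the finite-dimensional space of first integrals of a subalgebra sitting in the derived series, and the Lie theorem \ref{teo:Lie} controls this action. I would make the reduction precise by choosing, via Engel's theorem applied simultaneously at every jet level, a complete flag of $\mathfrak g$-invariant subspaces and transporting it to an invariant formal fibration; the derived and descending central series are then followed through the finite-dimensional groups $G_{k}$ and passed to the Krull limit as in propositions \ref{pro:lie} and \ref{pro:lieder} and lemma \ref{lem:normal}.

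The main obstacle is the construction of this invariant reduction and, above all, the verification that nilpotency costs only one derived step per dimension (yielding $n$) rather than the two steps per dimension of the general solvable case (which yields $2n$). Concretely, one must show that the linear representation of $\mathfrak g$ on each space of first integrals is again nilpotent, so that its contribution to the derived length is a single step; this is where the centrality of the semisimple parts, established in the first paragraph, is used in an essential way. A secondary technical point is to guarantee that the invariants are genuine formal (or meromorphic) first integrals and that the fibre-tangent ideal $\mathfrak h$ really reduces to nilpotent one-dimensional vector fields, so that the base case applies and the induction closes.
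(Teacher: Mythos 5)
Your reduction of the non-connected case contains a false step: you assert that in a nilpotent algebraic group the semisimple elements are central, and deduce that $\overline{G}_{s}^{(0)}$ is central and $(\overline{G}^{(0)})^{(j)}=(\overline{G}_{u}^{(0)})^{(j)}$ for $j\ge 1$. Centrality of the semisimple part holds for \emph{connected} nilpotent algebraic groups, but the proposition explicitly allows non-connected $G$, and the groups $G_{k}$ need not be connected. For example the dihedral group of order $8$ generated in $GL(2,{\mathbb C})$ by ${\rm diag}(1,-1)$ and the coordinate swap is a nilpotent algebraic group all of whose elements are semisimple, yet it is non-abelian; for such a $G$ your inequality $l(\overline{G}^{(0)})\le\max\{1,l(\overline{G}_{u}^{(0)})\}$ would give $l(G)\le 1$, which is false ($l(G)=2$). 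The paper only uses that the semisimple and unipotent parts of a nilpotent linear group commute \emph{with each other} (Wehrfritz, lemma 7.10), obtains $l(\overline{G}^{(0)})=\max(l(\overline{G}_{s}^{(0)}),l(\overline{G}_{u}^{(0)}))$, and then bounds $l(\overline{G}_{s}^{(0)})=l(G_{1,s})\le[\log_{2}n]+1\le n$ separately via Suprunenko's classification of nilpotent linear groups (remark \ref{rem:ing}). Some input of this kind is indispensable: the semisimple part genuinely contributes to the derived length.

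The second, and more serious, gap is in the inductive step for the Lie algebra. Engel's theorem applied to $\mathfrak g_{1}$ yields a common null-vector of the $1$-jets, but this does not produce a $\mathfrak g$-invariant fibration of \cn{n} with one-dimensional fibres, nor a formal or meromorphic first integral of the full nonlinear vector fields; no construction is given, and you yourself flag as ``the main obstacle'' exactly the point that must be proved, namely that each drop in dimension costs a single derived step rather than two. That is the entire content of the proposition beyond the solvable case. The paper establishes it by a different mechanism: with the notation of proposition \ref{pro:intd}, if $\kappa(p+1)<\kappa(p)=\kappa(p-1)$ then either all diagonal entries $W_{1}(\alpha_{1})$ vanish (and one argues as in proposition \ref{pro:intd} to get $\kappa(p)<\kappa(p)$), or one finds $Z\in\overline{\mathfrak g}^{(p-1)}$ with $W_{1}(\alpha_{1})\neq 0$ and $Z'\in\overline{\mathfrak g}^{(p)}$ such that the iterated brackets $A_{1}=[Z,Z']$, $A_{j+1}=[Z,A_{j}]$ have $W_{1}$-coefficient $(-1)^{j}a_{1}W_{1}(\alpha_{1})^{j}\neq 0$ for every $j$, contradicting nilpotency of $\overline{\mathfrak g}^{(p-1)}$. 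Hence $\kappa$ is strictly decreasing and $l(\mathfrak g)\le n$. Nothing in your sketch substitutes for this computation (and the ``centrality of the semisimple parts'' you invoke at this stage is vacuous here, since the algebra consists of nilpotent vector fields), so as written the proposal does not separate the nilpotent bound $n$ from the unipotent solvable bound $2n-1$.
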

The optimality of the bounds, that completes the proof of theorems
\ref{teo:mainc},
\ref{teo:main} and \ref{teo:mainn}, is postponed for next section.

Proposition \ref{pro:mainc} is an immediate consequence of
propositions \ref{pro:lieder}, \ref{pro:lie} and next result.
\begin{pro}
\label{pro:alg}
Let ${\mathfrak g}$ a solvable Lie subalgebra of  $\hat{\mathcal X} \cn{n}$.
Then we have $l({\mathfrak g}) \leq 2n$.
\end{pro}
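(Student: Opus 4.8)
The plan is to attach to every term of the derived series an integer, its \emph{rank} over the field of formal meromorphic functions, and to show that this rank drops by at least one every two steps. Let $K$ be the field of fractions of ${\mathbb C}[[x_1,\ldots,x_n]]$. Every $X \in \hat{\mathcal X} \cn{n}$ is a ${\mathbb C}$-derivation of $K$, and $\mathrm{Der}_{\mathbb C}(K)$ is a $K$-vector space of dimension $n$. For a Lie subalgebra ${\mathfrak a}$ of ${\mathfrak g}$ I set $r({\mathfrak a}) = \dim_K (K {\mathfrak a})$, where $K{\mathfrak a}$ is the $K$-span of ${\mathfrak a}$ inside $\mathrm{Der}_{\mathbb C}(K)$. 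Then $0 \le r({\mathfrak a}) \le n$, the rank is non-increasing along the derived series, and $r({\mathfrak a}) = 0$ if and only if ${\mathfrak a} = 0$ (a nonzero vector field acts nontrivially on $K$). Since ${\mathfrak a}$ is a Lie algebra, the Leibniz rule shows that $K{\mathfrak a}$ is an involutive distribution. The proposition follows once I show that $r({\mathfrak g}^{(j+2)}) < r({\mathfrak g}^{(j)})$ whenever ${\mathfrak g}^{(j)} \neq 0$: iterating from $r({\mathfrak g}^{(0)}) \le n$ yields $r({\mathfrak g}^{(2n)}) \le 0$, hence ${\mathfrak g}^{(2n)} = 0$ and $l({\mathfrak g}) \le 2n$.

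If $r({\mathfrak g}^{(j+1)}) < r({\mathfrak g}^{(j)})$ there is nothing to prove for that pair, since the rank is non-increasing. Thus I may assume $r({\mathfrak g}^{(j)}) = r({\mathfrak g}^{(j+1)}) = m \ge 1$, so that $K{\mathfrak g}^{(j)} = K{\mathfrak g}^{(j+1)}$ is a fixed involutive distribution $V$ of rank $m$. By (formal) Frobenius over $K$ the field of first integrals of $V$ has transcendence degree $n-m$; restricting to a generic leaf --- that is, regarding $K$ as the function field of an $m$-dimensional leaf over the field of first integrals --- embeds ${\mathfrak g}^{(j)}$ as a \emph{full-rank} solvable Lie algebra of vector fields in $m$ variables, with ${\mathfrak g}^{(j+1)}$ again of full rank $m$. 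The whole statement is therefore reduced to the following key lemma: if ${\mathfrak h}$ is a solvable Lie algebra of vector fields in $m$ variables with $r({\mathfrak h}) = r({\mathfrak h}') = m$, then $r({\mathfrak h}'') \le m-1$.

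The key lemma is where Lie's theorem enters, in the spirit announced in the introduction. First, ${\mathfrak h}' = [{\mathfrak h},{\mathfrak h}]$ consists of vector fields with nilpotent linear part: $j^1 {\mathfrak h}$ is a solvable subalgebra of ${\mathfrak{gl}}(m,{\mathbb C})$, so by Lie's theorem its derived algebra is made of nilpotent matrices, and $j^1 {\mathfrak h}' \subseteq [j^1 {\mathfrak h}, j^1 {\mathfrak h}]$; hence ${\mathfrak h}' \subset \hat{\mathcal X}_N \cn{m}$. Next I associate a matrix Lie algebra to the pair $({\mathfrak h}, {\mathfrak h}')$: choosing a $K$-basis $X_1,\ldots,X_m \in {\mathfrak h}'$ of $\mathrm{Der}_{\mathbb C}(K)$ (possible because $r({\mathfrak h}') = m$) and using that ${\mathfrak h}'$ is an ideal, I attach to $Y \in {\mathfrak h}$ the matrix $A(Y) \in M_m(K)$ defined by $[Y, X_j] = \sum_i A(Y)_{ij} X_i$. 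A direct computation with the Jacobi identity gives
\[
 A([Y,Z]) = [A(Y), A(Z)] + Y\!\cdot\! A(Z) - Z\!\cdot\! A(Y),
\]
where $Y\!\cdot\!A(Z)$ means applying $Y$ to the entries. Taking traces kills the commutator and leaves a character-type relation; applying Lie's theorem to the solvable action encoded by $A$ should produce a common eigen-covector and an associated semi-invariant whose level sets are ${\mathfrak h}''$-invariant, i.e. a nonconstant first integral of ${\mathfrak h}''$. This forces $r({\mathfrak h}'') \le m-1$.

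The main obstacle is precisely this last step: the matrices $A(Y)$ do not form a Lie subalgebra of $M_m(K)$ on the nose, because of the derivation correction terms $Y\!\cdot\!A(Z)-Z\!\cdot\!A(Y)$ in the displayed identity, so Lie's theorem cannot be quoted verbatim. I expect to handle this by working at a single finite jet level --- where the action of ${\mathfrak h}$ on a finite-dimensional space of first integrals of ${\mathfrak h}'$ (or of ${\mathfrak h}''$) is genuinely linear and solvable --- extracting the common eigenvalue there and then globalizing it to a meromorphic first integral. The nilpotency of the linear parts of ${\mathfrak h}'$ is what guarantees that no first integral is lost to the linear action and that exactly two derived steps, rather than one, are consumed per unit of rank. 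That two steps are unavoidable is already visible in one variable, where ${\mathfrak h} = \langle x\partial_x, x^2\partial_x\rangle$ is full rank, ${\mathfrak h}' = \langle x^2 \partial_x \rangle$ is still full rank, and ${\mathfrak h}'' = 0$, exhibiting the drop $1 = r({\mathfrak h}) = r({\mathfrak h}') > r({\mathfrak h}'') = 0$ together with the optimal length $2 = 2\cdot 1$.
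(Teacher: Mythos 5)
Your overall strategy coincides with the paper's: introduce the rank $\kappa(p)=\dim_{\hat{K}_{n}}$ of the $\hat{K}_{n}$-span of the $p$-th derived algebra, prove that it strictly drops every two steps of the derived series, and obtain the drop from Lie's theorem applied to an associated solvable Lie algebra of matrices over the field of first integrals. But the one step you flag as "the main obstacle" is exactly the step that carries the whole proof, and your candidate construction does not close it. The matrices $A(Y)$ you define by the adjoint action $[Y,X_{j}]=\sum_{i}A(Y)_{ij}X_{i}$ genuinely fail to give a Lie algebra homomorphism because of the correction terms $Y\!\cdot\!A(Z)-Z\!\cdot\!A(Y)$, and no amount of trace-taking or jet-truncation obviously repairs this: at a finite jet level the same derivation terms reappear, and "extracting a common eigenvalue there and globalizing it" is not an argument. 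As written, the key lemma is unproved.

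The paper's resolution is a different matrix assignment, and the difference is essential. Fix $p$ with $\kappa(p+1)<\kappa(p)=\kappa(p-2)$ (one checks this configuration is the only thing to rule out), choose $Y_{1},\dots,Y_{q}$ in $\overline{\mathfrak g}^{(p+1)}$ spanning its $\hat K_{n}$-span and complete by $X_{1},\dots,X_{m}\in\overline{\mathfrak g}^{(p)}$, and to $Z=\sum b_{j}Y_{j}+\sum a_{k}X_{k}\in\overline{\mathfrak g}^{(p-2)}$ attach $M_{Z}=\bigl(X_{j}(a_{k})\bigr)$ --- the derivatives of the \emph{coefficients of $Z$ itself}, not the structure constants of ${\rm ad}(Z)$. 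The normality relations $[\overline{\mathfrak g}^{(p+1)},\overline{\mathfrak g}^{(r)}]\subset\overline{\mathfrak g}^{(p+1)}$ and $[\overline{\mathfrak g}^{(p)},\overline{\mathfrak g}^{(r)}]\subset\overline{\mathfrak g}^{(p)}$ force $a_{k}\in{\mathcal M}_{p+1}$ and $X_{j}(a_{k})\in{\mathcal M}_{p}$, i.e.\ the entries of $M_{Z}$ are first integrals of $\overline{\mathfrak g}^{(p)}$; precisely because of this the correction terms vanish identically and $M_{[Z,W]}=[M_{Z},M_{W}]$ holds exactly (lemma \ref{lem:sol}), so Lie's theorem applies over $\overline{\mathcal M}_{p}$ verbatim. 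Two smaller points: the Frobenius reduction to a "generic leaf" that you invoke is not needed (the paper works directly with the fields ${\mathcal M}_{p}$ and ${\mathcal M}_{p+1}$, avoiding any claim about transcendence degrees of fields of constants), and the nilpotency of $j^{1}{\mathfrak h}'$ is irrelevant to the $2n$ bound --- it only enters for the sharper $2n-1$ bound in the unipotent case. So the skeleton of your argument is the right one, but the load-bearing lemma is missing and the specific mechanism you propose for it would have to be replaced by the coefficient-matrix construction.
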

The next definitions are intended to provide the setting for the proof of prop.
\ref{pro:alg}.
\begin{defi}
We define $\hat{K}_{n}$ the field of fractions of the ring
of formal power series ${\mathbb C}[[x_{1},\hdots,x_{n}]]$.
We define $\overline{K}_{n}$ the algebraic closure of
$\hat{K}_{n}$.
\end{defi}
\begin{defi}
Let ${\mathfrak g}$ a Lie subalgebra of  $\hat{\mathcal X} \cn{n}$.
We define $\kappa (p)$
as the dimension of the ${\hat{K}_{n}}$ vector space
$\overline{\mathfrak g}^{(p)}  \otimes_{\mathbb C} \hat{K}_{n}$
for any $p \geq 0$.
It is obvious that $\kappa (p+1) \leq \kappa (p) \leq n$ for any $p \geq 0$.
\end{defi}
\begin{defi}
Let ${\mathfrak g}$ a Lie subalgebra of  $\hat{\mathcal X} \cn{n}$.
We define the field
\[ {\mathcal M}_{p} = \{ g \in \hat{K}_{n} : X(g) =0  \
\forall X \in \overline{\mathfrak g}^{(p)} \} \]
for any $p \geq 0$.
We denote $\overline{\mathcal M}_{p}$ the algebraic closure of
${\mathcal M}_{p}$.
\end{defi}
The proof of the inequality $l({\mathfrak g}) \leq 2n$ for a solvable Lie algebra
of formal vector fields
relies on the properties of the  sequence
\[ 0= \kappa (l({\mathfrak g})) \leq \kappa (l({\mathfrak g})-1) \leq \hdots \leq
\kappa(1) \leq \kappa(0) \leq n. \]
Indeed prop. \ref{pro:alg}  is a direct consequence of
proposition \ref{pro:intd}.
\begin{pro}
\label{pro:intd}
Let ${\mathfrak g}$ a solvable Lie subalgebra of  $\hat{\mathcal X} \cn{n}$.
Let $p \geq 0$ such that $\kappa(p)>0$.
Then we have $\kappa(p+2) < \kappa(p)$.
\end{pro}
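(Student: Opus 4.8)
The plan is to exploit that $\kappa$ is non-increasing, so that $\kappa(p+2)<\kappa(p)$ can fail only when $\kappa(p)=\kappa(p+1)=\kappa(p+2)=:\kappa$, and by hypothesis $\kappa>0$. I would assume this chain of equalities and derive a contradiction. First I observe that the three nested $\hat{K}_{n}$-vector spaces $\overline{\mathfrak g}^{(p+2)}\otimes_{\mathbb C}\hat{K}_{n}\subseteq \overline{\mathfrak g}^{(p+1)}\otimes_{\mathbb C}\hat{K}_{n}\subseteq \overline{\mathfrak g}^{(p)}\otimes_{\mathbb C}\hat{K}_{n}$ all have dimension $\kappa$, hence they coincide. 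Since the field of first integrals ${\mathcal M}_{q}$ depends only on the $\hat{K}_{n}$-span of $\overline{\mathfrak g}^{(q)}$ (because $(\sum f_{i}X_{i})(g)=\sum f_{i}X_{i}(g)$), it follows that ${\mathcal M}_{p}={\mathcal M}_{p+1}={\mathcal M}_{p+2}=:M$.

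Next I would produce a semi-invariant. Set $\mathfrak a=\overline{\mathfrak g}^{(p+1)}$. By definition every $X\in\mathfrak a$ kills $M$, so $\mathfrak a$ is a non-zero solvable $\mathbb C$-Lie algebra whose $\hat{K}_{n}$-span is the full rank-$\kappa$ module of derivations vanishing on $M$. Following the construction outlined in the introduction, I would let the elements of $\mathfrak a$ act on a suitable finite-dimensional space of (jets of) meromorphic first integrals, thereby attaching to $\mathfrak a$ a solvable Lie algebra of matrices. Lie's theorem (th. \ref{teo:Lie}) then furnishes a common eigenvector, that is, a non-zero $g$ in a suitable algebraic extension of $\hat{K}_{n}$ together with a character $\lambda:\mathfrak a\to\mathbb C$ satisfying $X(g)=\lambda(X)\,g$ for all $X\in\mathfrak a$. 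Because $\mathfrak a\neq 0$ is solvable its abelianization $\mathfrak a/[\mathfrak a,\mathfrak a]$ is non-zero, so the representation should be arranged so that $\lambda$ is non-trivial; fix $X_{0}\in\mathfrak a$ with $\lambda(X_{0})\neq 0$.

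The contradiction comes from reading $g$ in two ways. Since $\lambda$ vanishes on $[\mathfrak a,\mathfrak a]=\overline{\mathfrak g}^{(p+2)}$ we obtain $X(g)=0$ for every $X\in\overline{\mathfrak g}^{(p+2)}$, so $g\in{\mathcal M}_{p+2}=M$. On the other hand a semi-invariant with non-trivial character is transcendental over $M$: if $g$ satisfied a relation $\sum_{j}c_{j}g^{j}=0$ with $c_{j}\in M$ of minimal degree, then applying $X_{0}$ and using $X_{0}(c_{j})=0$ (as $c_{j}\in M$) together with $X_{0}(g)=\lambda(X_{0})g$ yields $\lambda(X_{0})\sum_{j}j\,c_{j}g^{j}=0$, a non-trivial relation of smaller degree, impossible in characteristic $0$. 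Thus $g$ lies in $M$ and is transcendental over $M$, a contradiction, which proves $\kappa(p+2)<\kappa(p)$.

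The hard part will be the middle step: realizing $\mathfrak a=\overline{\mathfrak g}^{(p+1)}$ as a genuine finite-dimensional matrix Lie algebra so that Lie's theorem applies and, more delicately, so that the eigenvector it produces is a semi-invariant with a \emph{non-trivial} $\mathbb C$-valued character rather than a mere first integral of $\mathfrak a$. This forces one to select the correct finite-dimensional space of first integrals and to control the passage to the algebraic closures $\overline{K}_{n}$ and $\overline{\mathcal M}_{p+1}$; the positivity $\kappa>0$ and the collapse $\overline{\mathfrak g}^{(p)}\otimes_{\mathbb C}\hat{K}_{n}=\overline{\mathfrak g}^{(p+1)}\otimes_{\mathbb C}\hat{K}_{n}$ are precisely what should guarantee that a non-trivial character is available.
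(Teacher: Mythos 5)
Your reduction to the case $\kappa(p)=\kappa(p+1)=\kappa(p+2)=\kappa>0$ is correct, as are the identifications of the three $\hat{K}_{n}$-spans and of the fields ${\mathcal M}_{p}={\mathcal M}_{p+1}={\mathcal M}_{p+2}=M$; the endgame is even simpler than your transcendence argument, since once the spans of $\overline{\mathfrak g}^{(p+1)}$ and $\overline{\mathfrak g}^{(p+2)}$ coincide, any $g$ annihilated by $[\mathfrak a,\mathfrak a]$ is annihilated by all of $\mathfrak a$, forcing $\lambda\equiv 0$ outright. But this same observation shows that the middle step --- which you leave unspecified and which is the entire content of the proof --- cannot be carried out in the form you propose. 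If $\mathfrak a=\overline{\mathfrak g}^{(p+1)}$ acts by derivations on any finite-dimensional space of meromorphic functions, the common eigenvector $g$ produced by Lie's theorem has a weight that automatically vanishes on $[\mathfrak a,\mathfrak a]$, so $g$ is a first integral of $\overline{\mathfrak g}^{(p+2)}$ and hence, on the plateau, of $\mathfrak a$ itself; every weight of every such representation is therefore zero. The non-vanishing of the abstract abelianization $\mathfrak a/[\mathfrak a,\mathfrak a]$ guarantees that characters exist in the abstract, not that a representation on first integrals realizes a non-zero one. Working at the plateau with $\mathfrak a$ alone is a dead end.

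The paper avoids this by not working at the plateau: since $\mathfrak g$ is solvable, $\kappa$ eventually reaches $0$, so a plateau of length three at a positive value would have to terminate at some index $P$ with $\kappa(P+1)<\kappa(P)=\kappa(P-1)=\kappa(P-2)$. At that drop one chooses transversal generators $X_{1},\hdots,X_{m}$ of $\overline{\mathfrak g}^{(P)}$ modulo $\overline{\mathfrak g}^{(P+1)}$, proves that the corresponding coefficients $a_{k}$ of elements of $\overline{\mathfrak g}^{(r)}$ ($r\leq P$) lie in ${\mathcal M}_{P}$ (lemmas \ref{lem:aux} and \ref{lem:aux2}), and builds the matrix algebra from the action of the \emph{larger} algebra $\overline{\mathfrak g}^{(P-2)}$ on these coefficients, $Z\mapsto M_{Z}=(X_{j}(a_{k}))$. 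After triangularizing over $\overline{\mathcal M}_{P}$ the contradiction is not a non-trivial semi-invariant but a rank drop: two derived steps annihilate the $W_{1}$-direction, so $(\overline{\mathfrak g}^{(P-2)})^{(2)}$ spans a space of dimension strictly less than $\kappa(P)$, contradicting $\kappa(P)=\kappa(P-2)$. To complete your argument you would need to locate this drop and construct this (or an equivalent) representation together with its dimension-count conclusion; as written, your proposal supplies no candidate for the finite-dimensional space, and the natural candidates all carry only the zero weight.
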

The closed derived series
\[ \hdots \triangleleft \overline{\mathfrak g}^{(p+1)} \triangleleft \overline{\mathfrak g}^{(p)}
\triangleleft \overline{\mathfrak g}^{(p-1)} \triangleleft \overline{\mathfrak g}^{(p-2)}
\triangleleft
\hdots \triangleleft \overline{\mathfrak g}^{(1)}
\triangleleft \overline{\mathfrak g}^{(0)} = {\mathfrak g} \]
is normal.
The properties $[\overline{\mathfrak g}^{(r)}, \overline{\mathfrak g}^{(s)}] \subset
\overline{\mathfrak g}^{(r)}$ for $r >s$ and
$[\overline{\mathfrak g}^{(r)}, \overline{\mathfrak g}^{(r)}] \subset
\overline{\mathfrak g}^{(r+1)}$ for $r \geq 0$ impose restrictions on the elements of the
Lie algebras $\overline{\mathfrak g}^{(r)}$ ($r \geq 0$).
Our idea for the proof of propositions \ref{pro:maxd} and \ref{pro:intd} is associating
to any $\overline{\mathfrak g}^{(p)}$ a Lie algebra of matrices ${\mathcal G}_{p}$
with coefficients in ${\mathcal M}_{p}$. Then ${\mathcal G}_{p}$ is up to conjugation
a Lie algebra of upper triangular matrices by Lie's theorem \ref{teo:Lie}.
The eigenvalues are elements of
$\overline{\mathcal M}_{p}$. If all the eigenvalues of all elements of ${\mathcal G}_{p}$
are $0$ it is easy to see that $\kappa(p+1) < \kappa (p)$.
Otherwise all the eigenvalues of elements of
$[{\mathcal G}_{p} , {\mathcal G}_{p}]$ are $0$ and we obtain $\kappa (p+2) < \kappa (p)$.

Let us fix the setup for the rest of the section.
Let ${\mathfrak g} \subset \hat{\mathcal X} \cn{n}$ be a solvable Lie algebra.
Fix $p \in {\mathbb N}$ such that
$\kappa(p+1) < \kappa (p)$.
Denote $q = \kappa (p+1)$ and $m=\kappa (p) - \kappa (p+1)$.
There exists a base $\{ Y_{1}, \hdots, Y_{q} \}$ contained in
$\overline{\mathfrak g}^{(p+1)}$
of the vector space
$\overline{\mathfrak g}^{(p+1)}  \otimes_{\mathbb C} \hat{K}_{n}$.
Moreover there exists a base
$\{ Y_{1}, \hdots, Y_{q}, X_{1}, \hdots, X_{m}\}$ contained in
$\overline{\mathfrak g}^{(p)}$
of the vector space
$\overline{\mathfrak g}^{(p)}  \otimes_{\mathbb C} \hat{K}_{n}$.

Any element $Z \in \overline{\mathfrak g}^{(p)}$ is of the form
\begin{equation}
\label{equ:dev}
 Z = b_{1} Y_{1} + \hdots b_{q} Y_{q} + a_{1} X_{1} + \hdots + a_{m} X_{m}
\end{equation}
where $b_{1}, \hdots b_{q}, a_{1}, \hdots, a_{m} \in \hat{K}_{n}$.
We have
\begin{equation}
\label{equ:bra1}
 [Y_{j}, Z] = \sum_{k=1}^{q} Y_{j} (b_{k}) Y_{k} + \sum_{k=1}^{m} Y_{j} (a_{k}) X_{k} +
 \sum_{k=1}^{q} b_{k} [Y_{j}, Y_{k}] + \sum_{k=1}^{m} a_{k} [Y_{j}, X_{k}].
 \end{equation}
 Since
 $[ \overline{\mathfrak g}^{(p+1)},  \overline{\mathfrak g}^{(p)}] \subset
 \overline{\mathfrak g}^{(p+1)}$ (lemma \ref{lem:normal}) we obtain
 $Y_{j}(a_{k})=0$ for all $1 \leq j \leq q$ and $1 \leq k \leq m$.
 An analogous argument using the formula
 \begin{equation}
 \label{equ:bra2}
  [X_{j}, Z] = \sum_{k=1}^{q} X_{j} (b_{k}) Y_{k} + \sum_{k=1}^{m} X_{j} (a_{k}) X_{k} +
 \sum_{k=1}^{q} b_{k} [X_{j}, Y_{k}] + \sum_{k=1}^{m} a_{k} [X_{j}, X_{k}]
 \end{equation}
and  $[ \overline{\mathfrak g}^{(p)},  \overline{\mathfrak g}^{(p)}] \subset
 \overline{\mathfrak g}^{(p+1)}$
provide
$X_{j}(a_{k})=0$ for all $1 \leq j \leq m$ and $1 \leq k \leq m$.

The next lemma is a consequence of the previous discussion.
\begin{lem}
\label{lem:aux}
Suppose $\kappa(p+1) < \kappa (p)$.
Let $Z = \sum_{j=1}^{q} b_{j} Y_{j} + \sum_{k=1}^{m} a_{k} X_{k}$
be an element of $\overline{\mathfrak g}^{(p)}$. Then
$a_{1}, \hdots , a_{m}$ belong to ${\mathcal M}_{p}$.
\end{lem}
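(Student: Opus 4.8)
The plan is to assemble the computation carried out immediately before the statement. That computation already produces, from the inclusion $[\overline{\mathfrak g}^{(p+1)}, \overline{\mathfrak g}^{(p)}] \subset \overline{\mathfrak g}^{(p+1)}$ together with formula (\ref{equ:bra1}), the relations $Y_j(a_k) = 0$ for all $1 \leq j \leq q$ and $1 \leq k \leq m$; and, from $[\overline{\mathfrak g}^{(p)}, \overline{\mathfrak g}^{(p)}] \subset \overline{\mathfrak g}^{(p+1)}$ together with formula (\ref{equ:bra2}), the relations $X_j(a_k) = 0$ for all $1 \leq j \leq m$ and $1 \leq k \leq m$. In other words, every one of the basis vector fields $Y_1, \hdots, Y_q, X_1, \hdots, X_m$ annihilates each coefficient $a_k$ appearing in the expansion (\ref{equ:dev}).

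The only genuine content is to upgrade the statement ``the basis elements annihilate $a_k$'' to ``every element of $\overline{\mathfrak g}^{(p)}$ annihilates $a_k$'', which is exactly the membership $a_k \in {\mathcal M}_p$. First I would note that, since $\{ Y_1, \hdots, Y_q, X_1, \hdots, X_m \}$ is a $\hat{K}_{n}$-basis of the vector space $\overline{\mathfrak g}^{(p)} \otimes_{\mathbb C} \hat{K}_{n}$ and $\overline{\mathfrak g}^{(p)}$ embeds (over ${\mathbb C}$) into this tensor product, an arbitrary $X \in \overline{\mathfrak g}^{(p)}$ admits a unique expression $X = \sum_{j=1}^{q} c_j Y_j + \sum_{i=1}^{m} d_i X_i$ with $c_j, d_i \in \hat{K}_{n}$. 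Then I would invoke the fact that the action of a formal vector field on $\hat{K}_{n}$ is a derivation which is $\hat{K}_{n}$-linear in the module sense, that is $(f W)(g) = f \cdot W(g)$ for $f \in \hat{K}_{n}$. Combining these two observations with the vanishing from the first paragraph gives $X(a_k) = \sum_{j=1}^{q} c_j Y_j(a_k) + \sum_{i=1}^{m} d_i X_i(a_k) = 0$ for every $k$.

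Finally, since the $a_k$ lie in $\hat{K}_{n}$ by construction (they are the uniquely determined coefficients in (\ref{equ:dev})) and $X(a_k) = 0$ holds for every $X \in \overline{\mathfrak g}^{(p)}$, the definition of ${\mathcal M}_p$ yields $a_k \in {\mathcal M}_p$ for each $1 \leq k \leq m$, as claimed. I do not expect any real obstacle here: the single point deserving a moment of care is the passage from the ${\mathbb C}$-linear Lie algebra $\overline{\mathfrak g}^{(p)}$ to its $\hat{K}_{n}$-span inside the module of meromorphic vector fields, together with the module-linearity of the derivation action on $\hat{K}_{n}$; everything else is the bookkeeping already completed in (\ref{equ:bra1}) and (\ref{equ:bra2}).
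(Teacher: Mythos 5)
Your proof is correct and follows essentially the same route as the paper, which simply states that the lemma ``is a consequence of the previous discussion'' — namely the relations $Y_j(a_k)=0$ and $X_j(a_k)=0$ extracted from formulas (\ref{equ:bra1}) and (\ref{equ:bra2}) via the normality of the closed derived series. Your explicit final step (passing from annihilation by the basis $\{Y_1,\hdots,Y_q,X_1,\hdots,X_m\}$ to annihilation by every element of $\overline{\mathfrak g}^{(p)}$ using the $\hat{K}_{n}$-module linearity of the derivation action) is exactly the detail the paper leaves implicit.
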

\begin{lem}
\label{lem:aux2}
Suppose $\kappa(r) = \kappa (p)$ for some $r < p$.
Consider the notations in this section.
Let  $Z = \sum_{j=1}^{q} b_{j} Y_{j} + \sum_{k=1}^{m} a_{k} X_{k}$
be an element of $\overline{\mathfrak g}^{(r)}$. Then
$a_{k}$ belongs to ${\mathcal M}_{p+1}$ for any $1 \leq k \leq m$
and
$X_{j}(a_{k}) \in {\mathcal M}_{p}$ for all $1 \leq j \leq m$ and $1 \leq k \leq m$
\end{lem}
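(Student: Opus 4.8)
The plan is to substitute $Z$ into the bracket formulas (\ref{equ:bra1}) and (\ref{equ:bra2}) already derived in this section, to keep careful track of which closed derived subalgebra each bracket lands in, and to reduce the second assertion to lemma \ref{lem:aux}. As a preliminary I would record that the hypothesis $\kappa(r)=\kappa(p)$ with $r<p$ is what makes the stated form of $Z$ coherent: since $\kappa$ is non-increasing and $\overline{\mathfrak g}^{(p)} \subseteq \overline{\mathfrak g}^{(r)}$, the equality of dimensions forces $\overline{\mathfrak g}^{(r)} \otimes_{\mathbb C} \hat{K}_n = \overline{\mathfrak g}^{(p)} \otimes_{\mathbb C} \hat{K}_n$, so that $\{ Y_1,\dots,Y_q,X_1,\dots,X_m\}$ is also a $\hat{K}_n$-basis of $\overline{\mathfrak g}^{(r)}\otimes_{\mathbb C}\hat{K}_n$ and $Z \in \overline{\mathfrak g}^{(r)}$ admits the claimed expansion.

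To prove $a_k \in \mathcal{M}_{p+1}$, I would apply (\ref{equ:bra1}) to $[Y_j,Z]$. Since $Y_j \in \overline{\mathfrak g}^{(p+1)}$ and $Z \in \overline{\mathfrak g}^{(r)}$ with $r<p+1$, the nesting properties preceding lemma \ref{lem:aux} give $[Y_j,Z]\in\overline{\mathfrak g}^{(p+1)}$, hence $[Y_j,Z]$ is a $\hat{K}_n$-linear combination of $Y_1,\dots,Y_q$. The same properties show the brackets $[Y_j,Y_k]$ and $[Y_j,X_k]$ also lie in $\overline{\mathfrak g}^{(p+1)}$, so every term on the right of (\ref{equ:bra1}) except $\sum_{k} Y_j(a_k)X_k$ is already a combination of the $Y_i$. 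Therefore $\sum_{k} Y_j(a_k)X_k$ itself is a combination of $Y_1,\dots,Y_q$, and the $\hat{K}_n$-linear independence of the full basis forces $Y_j(a_k)=0$ for all $j,k$. Because the $Y_j$ span $\overline{\mathfrak g}^{(p+1)}\otimes_{\mathbb C}\hat{K}_n$ over $\hat{K}_n$, every element of $\overline{\mathfrak g}^{(p+1)}$ annihilates $a_k$, that is $a_k\in\mathcal{M}_{p+1}$.

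To prove $X_j(a_k)\in\mathcal{M}_p$, I would feed $[X_j,Z]$ into lemma \ref{lem:aux}. Expanding $[X_j,Z]$ via (\ref{equ:bra2}), the brackets $[X_j,Y_k]$ and $[X_j,X_k]$ land in $\overline{\mathfrak g}^{(p+1)}$ and thus contribute only to the $Y$-part; hence, in the basis $\{Y_i,X_i\}$, the coefficient of $X_k$ in $[X_j,Z]$ is exactly $X_j(a_k)$. Now $X_j\in\overline{\mathfrak g}^{(p)}$ and $Z\in\overline{\mathfrak g}^{(r)}$ with $r<p$, so the nesting properties give $[X_j,Z]\in\overline{\mathfrak g}^{(p)}$. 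Applying lemma \ref{lem:aux} to the element $[X_j,Z]$ of $\overline{\mathfrak g}^{(p)}$ shows that its $X$-coefficients belong to $\mathcal{M}_p$, i.e. $X_j(a_k)\in\mathcal{M}_p$, as desired.

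The step requiring the most care is the bookkeeping of the closed derived series. The whole argument rests on the sharp distinction that, because $r<p$ rather than $r=p$, the bracket $[X_j,Z]$ lands only in $\overline{\mathfrak g}^{(p)}$ and not in $\overline{\mathfrak g}^{(p+1)}$; this is precisely why one obtains here the weaker conclusion $X_j(a_k)\in\mathcal{M}_p$ in place of the equality $X_j(a_k)=0$ valid in lemma \ref{lem:aux}, where $Z\in\overline{\mathfrak g}^{(p)}$. Invoking the $\hat{K}_n$-linear independence of the combined basis at the right moment is the only other delicate point; everything else is routine substitution into (\ref{equ:bra1}) and (\ref{equ:bra2}).
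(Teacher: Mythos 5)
Your proposal is correct and follows the same route as the paper: use the normality relations $[\overline{\mathfrak g}^{(p+1)},\overline{\mathfrak g}^{(r)}]\subset\overline{\mathfrak g}^{(p+1)}$ and $[\overline{\mathfrak g}^{(p)},\overline{\mathfrak g}^{(r)}]\subset\overline{\mathfrak g}^{(p)}$ in the expansions (\ref{equ:bra1}) and (\ref{equ:bra2}) to get $Y_{j}(a_{k})=0$, then identify $X_{j}(a_{k})$ as the $X_{k}$-coefficient of $[X_{j},Z]$ and apply lemma \ref{lem:aux}. Your write-up merely makes explicit two steps the paper leaves implicit (that the $\{Y_i,X_i\}$ remain a basis over $\overline{\mathfrak g}^{(r)}\otimes_{\mathbb C}\hat{K}_n$, and that vanishing on the spanning set $\{Y_j\}$ gives $a_k\in{\mathcal M}_{p+1}$).
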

\begin{proof}
It is clear that
$\{ Y_{1}, \hdots, Y_{q}, X_{1}, \hdots, X_{m}\} $ is a base of
$\overline{\mathfrak g}^{(r)}  \otimes_{\mathbb C} \hat{K}_{n}$.
Any $Z \in \overline{\mathfrak g}^{(r)}$ is of the form
(\ref{equ:dev}). Then
$[Y_{j},Z] \in [\overline{\mathfrak g}^{(p+1)}, \overline{\mathfrak g}^{(r)}]
\subset \overline{\mathfrak g}^{(p+1)}$ implies
$Y_{j}(a_{k})=0$ for $1 \leq j \leq q$ and $1 \leq k \leq m$.
The element
$X_{j}(a_{k})$ is the coefficient of $X_{k}$ of
$[X_{j},Z] \in [\overline{\mathfrak g}^{(p)}, \overline{\mathfrak g}^{(r)}]
\subset \overline{\mathfrak g}^{(p)}$.
Thus $X_{j}(a_{k}) \in {\mathcal M}_{p}$ for all $1 \leq j \leq m$ and $1 \leq k \leq m$
by lemma \ref{lem:aux}.
\end{proof}
Suppose $\kappa(r) = \kappa (p)$ for some $r < p$.
We associate to any formal vector field
$Z = \sum_{j=1}^{q} b_{j} Y_{j} + \sum_{k=1}^{m} a_{k} X_{k}
\in  \overline{\mathfrak g}^{(r)}$ the matrix
\begin{equation}
\label{equ:matrix}
M_{Z} =
\left(
\begin{array}{cccc}
X_{1}(a_{1}) & X_{1}(a_{2}) & \hdots & X_{1}(a_{m}) \\
X_{2}(a_{1}) & X_{2}(a_{2}) & \hdots & X_{2}(a_{m}) \\
\vdots & \vdots & & \vdots \\
X_{m}(a_{1}) & X_{m}(a_{2}) & \hdots & X_{m}(a_{m})
\end{array}
\right)
\end{equation}
with coefficients in ${\mathcal M}_{p}$. In this way we can associate
a solvable Lie algebra of matrices (in which we can apply Lie's theorem
\ref{teo:Lie}) to $\overline{\mathfrak g}^{(r)}$.
\begin{lem}
\label{lem:sol}
Suppose $\kappa(r) = \kappa (p)$ for some $r < p$. Then
we have $M_{[Z,W]} = [M_{Z}, M_{W}]= M_{Z} M_{W} - M_{W} M_{Z}$ for all
 $Z, W \in  \overline{\mathfrak g}^{(r)}$.
\end{lem}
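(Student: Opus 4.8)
The plan is to verify that the assignment $Z \mapsto M_{Z}$ is a homomorphism of Lie algebras by computing both sides directly and matching them term by term. Throughout I would write $Z = \sum_{j=1}^{q} b_{j} Y_{j} + \sum_{k=1}^{m} a_{k} X_{k}$ and $W = \sum_{j=1}^{q} c_{j} Y_{j} + \sum_{k=1}^{m} d_{k} X_{k}$ in $\overline{\mathfrak g}^{(r)}$, where by lemma \ref{lem:aux2} the coefficients $a_{k}, d_{k}$ all lie in ${\mathcal M}_{p+1}$ and the derivatives $X_{j}(a_{k})$, $X_{j}(d_{k})$ all lie in ${\mathcal M}_{p}$. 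These two membership facts are exactly what the hypothesis $\kappa(r)=\kappa(p)$ buys us, and they will drive the whole computation.

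First I would compute the coefficient of $X_{t}$ in the vector field bracket $[Z,W]$, using the Leibniz-type identity $[fU, gV] = fg[U,V] + f U(g) V - g V(f) U$ for $f,g \in \hat{K}_{n}$ and vector fields $U,V$. Expanding $[Z,W]$ into the four families of brackets $[Y_{j},Y_{l}]$, $[Y_{j},X_{s}]$, $[X_{k},Y_{l}]$, $[X_{k},X_{s}]$, I would use the normality relations (lemma \ref{lem:normal}) $[\overline{\mathfrak g}^{(p+1)}, \overline{\mathfrak g}^{(p)}] \subset \overline{\mathfrak g}^{(p+1)}$ and $[\overline{\mathfrak g}^{(p)}, \overline{\mathfrak g}^{(p)}] \subset \overline{\mathfrak g}^{(p+1)}$ to observe that every such bracket already lies in $\overline{\mathfrak g}^{(p+1)}$, hence is an $\hat{K}_{n}$-combination of the $Y$'s alone and contributes nothing to the coefficient of $X_{t}$. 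The directional-derivative terms $Y_{j}(a_{k})$ and $Y_{j}(d_{k})$ vanish because $a_{k}, d_{k} \in {\mathcal M}_{p+1}$ and $Y_{j} \in \overline{\mathfrak g}^{(p+1)}$. What survives are exactly the terms $a_{k} X_{k}(d_{s}) X_{s}$ and $-d_{s} X_{s}(a_{k}) X_{k}$ coming from the $[X_{k},X_{s}]$-type expansions, so that
\[ e_{t} := \text{coefficient of } X_{t} \text{ in } [Z,W] = \sum_{k=1}^{m} \big( a_{k} X_{k}(d_{t}) - d_{k} X_{k}(a_{t}) \big) . \]

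Next I would apply $X_{i}$ to $e_{t}$ by the Leibniz rule to obtain the $(i,t)$-entry of $M_{[Z,W]}$, namely $X_{i}(e_{t}) = \sum_{k} \big( X_{i}(a_{k}) X_{k}(d_{t}) + a_{k} X_{i}(X_{k}(d_{t})) - X_{i}(d_{k}) X_{k}(a_{t}) - d_{k} X_{i}(X_{k}(a_{t})) \big)$, and compare it with the matrix commutator $[M_{Z}, M_{W}]_{it} = \sum_{k} \big( X_{i}(a_{k}) X_{k}(d_{t}) - X_{i}(d_{k}) X_{k}(a_{t}) \big)$. The two first-order families agree, so the lemma reduces to the vanishing of the second-order remainder $\sum_{k} \big( a_{k} X_{i}(X_{k}(d_{t})) - d_{k} X_{i}(X_{k}(a_{t})) \big)$.

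The crux, and the step I expect to be the main obstacle to present cleanly, is this final cancellation. I would settle it by invoking lemma \ref{lem:aux2} once more: the functions $X_{k}(d_{t})$ and $X_{k}(a_{t})$ lie in ${\mathcal M}_{p}$, and since $X_{i} \in \overline{\mathfrak g}^{(p)}$ annihilates every element of ${\mathcal M}_{p}$ by the very definition of ${\mathcal M}_{p}$, each term $X_{i}(X_{k}(d_{t}))$ and $X_{i}(X_{k}(a_{t}))$ is zero. Hence the remainder vanishes and $M_{[Z,W]} = [M_{Z}, M_{W}]$. The only genuine care needed is the bookkeeping in the first step — tracking which of the many expanded terms can produce an $X_{t}$-component — together with making sure the membership hypotheses of lemma \ref{lem:aux2} are in force, which is guaranteed by the standing assumption $\kappa(r) = \kappa(p)$.
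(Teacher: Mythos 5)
Your proof is correct and follows essentially the same route as the paper's: both compute the $X_{t}$-coefficient of $[Z,W]$ (reducing it to $\sum_{k}(a_{k}X_{k}(d_{t})-d_{k}X_{k}(a_{t}))$ via lemma \ref{lem:aux2} and the normality relations), then apply $X_{i}$ and kill the second-order terms because $X_{k}(d_{t}), X_{k}(a_{t}) \in {\mathcal M}_{p}$ are annihilated by $X_{i} \in \overline{\mathfrak g}^{(p)}$. The only difference is that you spell out the bracket bookkeeping slightly more explicitly than the paper does.
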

\begin{proof}
Let
\[
Z = \sum_{j=1}^{q} b_{j} Y_{j} +\sum_{k=1}^{m} a_{k} X_{k}  \ {\rm and} \
W = \sum_{j=1}^{q} \beta_{j} Y_{j} +\sum_{k=1}^{m} \alpha_{k} X_{k} . \]
We have
\[ [Z,W] =  \sum_{j=1}^{q} c_{j} Y_{j} +
\sum_{k=1}^{m} (Z(\alpha_{k}) - W(a_{k})) X_{k} \]
for some $c_{1},\hdots,c_{q} \in \hat{K}_{n}$. We obtain
\[ \sum_{k=1}^{m} (Z(\alpha_{k}) - W(a_{k})) X_{k}  =
\sum_{k=1}^{m} \left[
(\sum_{j=1}^{m} a_{j}X_{j})(\alpha_{k}) - (\sum_{j=1}^{m} \alpha_{j}X_{j})(a_{k})
\right] X_{k} \]
since $a_{k} \in {\mathcal M}_{p+1} \ni \alpha_{k}$ for any $1 \leq k \leq m$ (lemma \ref{lem:aux2}).
The element $(M_{[Z,W]})_{jk}$ satisfies
\[ (M_{[Z,W]})_{jk} = X_{j} \left(
 (\sum_{d=1}^{m} a_{d}X_{d})(\alpha_{k}) - (\sum_{d=1}^{m} \alpha_{d} X_{d})(a_{k})
\right)  \]
and then
\[ (M_{[Z,W]})_{jk} =
 \sum_{d=1}^{m} X_{j}(a_{d}) X_{d}(\alpha_{k}) -
 \sum_{d=1}^{m} X_{j}(\alpha_{d})X_{d}(a_{k})  \]
 since $X_{d}(a_{k}) \in {\mathcal M}_{p} \ni X_{d}(\alpha_{k})$ for $1 \leq k \leq m$.
\end{proof}
A formal vector field $X \in \hat{\mathcal X} \cn{n}$ is a derivation of the field
$\hat{K}_{n}$.
It can be extended to the field $\overline{K}_{n}$.
\begin{pro}
\label{pro:ext}
(see \cite{Bourbaki.alg.47} A.V.129, prop. 4)
Let  $X \in \hat{\mathcal X} \cn{n}$. Then $X$ induces a unique derivation in
$\overline{K}_{n}$.  Moreover it satisfies  $X_{| \overline{\mathcal M}_{p}} \equiv 0$
if  $X_{|{\mathcal M}_{p}} \equiv 0$.
\end{pro}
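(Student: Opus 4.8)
The plan is to reduce both assertions to the standard fact that a derivation extends uniquely along separable algebraic field extensions, which applies here for free because we work in characteristic $0$. First I would note that $X$, being a derivation of $\mathbb{C}[[x_{1},\ldots,x_{n}]]$, extends to its field of fractions $\hat{K}_{n}$ by the quotient rule, so it is legitimate to regard $X$ as a derivation of $\hat{K}_{n}$. Since $\overline{K}_{n}/\hat{K}_{n}$ is an algebraic extension and every algebraic extension in characteristic $0$ is separable, the existence and uniqueness of the extension of $X$ to $\overline{K}_{n}$ follow from the cited proposition of Bourbaki.

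For the mechanism behind uniqueness, which I will reuse for the second assertion, I would recall the explicit formula. Given $\alpha \in \overline{K}_{n}$ with minimal polynomial $P(T) = \sum_{i} c_{i} T^{i}$ over $\hat{K}_{n}$, applying the extended derivation to the relation $P(\alpha)=0$ and using the Leibniz rule yields
\[ 0 = \sum_{i} X(c_{i})\alpha^{i} + P'(\alpha)\, X(\alpha). \]
Because $P$ is irreducible and we are in characteristic $0$, it is separable, so $P'(\alpha) \neq 0$; hence $X(\alpha)$ is forced to equal $-\left(\sum_{i} X(c_{i})\alpha^{i}\right)/P'(\alpha)$. This already gives uniqueness, while existence reduces to checking that this prescription is independent of the chosen relation and satisfies additivity and the Leibniz rule, which is precisely the content of the Bourbaki reference.

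For the \emph{moreover} part I would run the same argument one level down, over the subfield $\mathcal{M}_{p}$. First observe that $\overline{\mathcal{M}}_{p}$ is realized inside $\overline{K}_{n}$ as the set of elements algebraic over $\mathcal{M}_{p} \subset \hat{K}_{n}$, so the extended derivation $X$ is defined on $\overline{\mathcal{M}}_{p}$. Fix $\alpha \in \overline{\mathcal{M}}_{p}$ and let $Q(T) = \sum_{i} d_{i} T^{i}$ be its minimal polynomial over $\mathcal{M}_{p}$, so that each $d_{i} \in \mathcal{M}_{p}$. Applying $X$ to $Q(\alpha)=0$ and using the hypothesis $X_{|\mathcal{M}_{p}} \equiv 0$, which kills every $X(d_{i})$, leaves $Q'(\alpha)\, X(\alpha) = 0$; since $Q$ is irreducible in characteristic $0$ we have $Q'(\alpha) \neq 0$, forcing $X(\alpha)=0$. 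As $\alpha$ was arbitrary, this proves $X_{|\overline{\mathcal{M}}_{p}} \equiv 0$.

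The only genuinely delicate point is the well-definedness of the extension, that is, the independence of the value $X(\alpha)$ from the algebraic relation used and the verification of the derivation axioms on $\overline{K}_{n}$. The hard part is therefore bundled into the cited Bourbaki proposition; once the extended derivation is in hand, both the uniqueness and the vanishing statement are immediate consequences of separability through the non-vanishing of $P'(\alpha)$ and $Q'(\alpha)$.
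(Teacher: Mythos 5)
Your argument is correct: the paper gives no proof of this proposition, deferring entirely to the Bourbaki citation, and your write-up is exactly the standard separability argument that reference encapsulates (extend by the quotient rule to $\hat{K}_{n}$, then differentiate the minimal polynomial relation and use $P'(\alpha)\neq 0$ in characteristic $0$ for both uniqueness and the vanishing on $\overline{\mathcal M}_{p}$). Nothing is missing; the only point you rightly flag as delegated to Bourbaki is well-definedness of the extension, which is where the cited proposition does its work.
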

We need the extension of $X$ to $\overline{K}_{n}$ since we plan to apply next theorem
to Lie algebras of matrices of the form (\ref{equ:matrix})
and ${\mathcal M}_{p}$ is not algebraically closed.
\begin{teo}
\label{teo:Lie}
(Lie, see \cite{Serre.Lie}, chapter V, section 5, page 36).
Let ${\mathfrak g}$ be a solvable Lie algebra over an algebraically closed field
$K$ of characteristic $0$. Let $\varrho$ be a linear representation of
${\mathfrak g}$ with representation space $V$. Then up to change of base in $V$
the Lie algebra $\varrho({\mathfrak g})$ is composed of upper triangular matrices.
\end{teo}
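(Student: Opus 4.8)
The plan is to reduce the statement to the existence of a common eigenvector for $\varrho(\mathfrak{g})$ and then argue by induction on $\dim_K V$. Suppose indeed that some nonzero $v \in V$ is a simultaneous eigenvector, so that $\varrho(X) v = \lambda(X) v$ for a linear form $\lambda : \mathfrak{g} \to K$ and all $X \in \mathfrak{g}$. Then $\varrho(\mathfrak{g})$ stabilizes the line $K v$ and induces a representation on the quotient $V / K v$, whose dimension has dropped by one. This quotient representation is again one of a solvable Lie algebra, so by the inductive hypothesis it is triangularizable; lifting a triangularizing basis of $V / K v$ and prepending $v$ yields an upper triangular form for $\varrho(\mathfrak{g})$ on $V$. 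The base case $\dim_K V \leq 1$ is trivial.

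The heart of the matter is therefore the \emph{common eigenvector lemma}: a solvable Lie algebra acting on a nonzero finite-dimensional space over an algebraically closed field of characteristic $0$ possesses a simultaneous eigenvector. I would prove this by induction on $\dim_K \mathfrak{g}$. Since $\mathfrak{g}$ is solvable and nonzero, the derived algebra $[\mathfrak{g}, \mathfrak{g}]$ is a proper subspace, so I can choose a codimension-one subspace $\mathfrak{h}$ with $[\mathfrak{g}, \mathfrak{g}] \subset \mathfrak{h}$; such an $\mathfrak{h}$ is automatically an ideal of $\mathfrak{g}$. Writing $\mathfrak{g} = \mathfrak{h} \oplus K z$ for a suitable $z$, the inductive hypothesis applied to the solvable ideal $\mathfrak{h}$ produces a linear form $\lambda : \mathfrak{h} \to K$ and a nonzero vector $w$ with $\varrho(h) w = \lambda(h) w$ for all $h \in \mathfrak{h}$. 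I then pass to the weight space $W = \{ v \in V : \varrho(h) v = \lambda(h) v \ \forall h \in \mathfrak{h} \}$, which is nonzero. Granting that $W$ is invariant under $\varrho(z)$ (and hence under all of $\mathfrak{g}$), algebraic closedness of $K$ guarantees that $\varrho(z)|_W$ has an eigenvector $v_{0} \in W$, and $v_{0}$ is then a simultaneous eigenvector for $\mathfrak{h}$ and for $z$, hence for $\mathfrak{g}$.

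The main obstacle is exactly the invariance of $W$, and this is where characteristic $0$ is indispensable. For $v \in W$ and $h \in \mathfrak{h}$ one computes, using the representation identity $[\varrho(h),\varrho(z)] = \varrho([h,z])$ and the fact that $[h,z] \in \mathfrak{h}$, that $\varrho(h)\varrho(z) v = \lambda(h)\,\varrho(z) v + \lambda([h,z])\, v$. Thus $\varrho(z) v \in W$ precisely when $\lambda([h,z]) = 0$ for every $h \in \mathfrak{h}$. To establish $\lambda([\mathfrak{h}, z]) = 0$ I would invoke the classical trace argument: fixing a nonzero $v \in W$, the subspace $U$ spanned by $v, \varrho(z) v, \varrho(z)^{2} v, \ldots$ is finite-dimensional and $\varrho(z)$-invariant, and an easy induction shows it is $\mathfrak{h}$-invariant with each $\varrho(h)|_U$ upper triangular having constant diagonal $\lambda(h)$, so that $\mathrm{tr}(\varrho(h)|_U) = (\dim_K U)\,\lambda(h)$. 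Applying this to $[h,z] \in \mathfrak{h}$ gives $(\dim_K U)\,\lambda([h,z]) = \mathrm{tr}\big(\varrho([h,z])|_U\big) = \mathrm{tr}\big([\varrho(h)|_U, \varrho(z)|_U]\big) = 0$, and since $\dim_K U$ is invertible in $K$ when $\mathrm{char}\, K = 0$, we conclude $\lambda([h,z]) = 0$. This trace computation is the only point where the characteristic hypothesis enters, and it is the crux of the whole proof.
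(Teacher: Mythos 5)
The paper does not prove Theorem \ref{teo:Lie}; it is quoted as a classical result with a reference to Serre, and your argument is precisely the standard proof found there (and in Humphreys): reduction to the existence of a common eigenvector, the weight-space invariance lemma, and the trace argument on the cyclic subspace $U$ where characteristic $0$ enters. Your proof is correct and matches the cited source's approach, so there is nothing to flag.
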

\begin{proof}[proof of prop. \ref{pro:intd}]
It suffices to prove that
$\kappa(p+1) < \kappa(p) = \kappa(p-2)$ is impossible.
Consider the notations in this section.

We define the complex Lie algebra
\[ {\mathcal G} = \{ M_{Z} : Z \in \overline{\mathfrak g}^{(p-2)} \} . \]
It satisfies ${\mathcal G}^{(2)} = 0$ by lemmas  \ref{lem:sol} and \ref{lem:aux}.
We define
$\overline{\mathcal G} = {\mathcal G} \otimes_{\mathbb C} \overline{\mathcal M}_{p}$,
it is a Lie algebra over the field $\overline{\mathcal M}_{p}$.
Since $(\overline{\mathcal G})^{(2)}=0$ then $\overline{\mathcal G}$ is solvable.

The Lie theorem (th. \ref{teo:Lie}) implies that there exists
$N \in GL(n,\overline{\mathcal M}_{p})$
such that $N^{-1} M N$ is an upper triangular matrix for any
$M \in {\mathcal G}$. We define the derivations
$W_{1}, \hdots, W_{m}$ of $\overline{K}_{n}$ by the formula
\begin{equation}
\label{equ:linalg}
(W_{1}, \hdots, W_{m}) = (X_{1},\hdots,X_{m}) N^{T} .
\end{equation}
Any $Z \in \overline{\mathfrak g}^{(p-2)}$ is of the form
\[ Z= b_{1} Y_{1}+ \hdots + b_{q} Y_{q} + \alpha_{1} W_{1} + \hdots + \alpha_{m} W_{m} \]
where $b_{1},\hdots,b_{q} \in \hat{K}_{n}$ and
$\alpha_{1},\hdots,\alpha_{m} \in  {\mathcal M}_{p+1} \otimes_{\mathbb C} \overline{\mathcal M}_{p}$.
The matrix
\[ \tilde{M}_{Z} =
\left(
\begin{array}{cccc}
W_{1}(\alpha_{1}) & W_{1}(\alpha_{2}) & \hdots & W_{1}(\alpha_{m}) \\
W_{2}(\alpha_{1}) & W_{2}(\alpha_{2}) & \hdots & W_{2}(\alpha_{m}) \\
\vdots & \vdots & & \vdots \\
W_{m}(\alpha_{1}) & W_{m}(\alpha_{2}) & \hdots & W_{m}(\alpha_{m})
\end{array}
\right)
\]
satisfies $\tilde{M}_{Z} = N M_{Z} N^{-1}$ (here it is key that the elements
of $\overline{\mathcal M}_{p}$ are first integrals of the vector fields in
$\overline{\mathfrak g}^{(p-2)}$ by prop. \ref{pro:ext}).
It is an upper triangular matrix.

Since $\tilde{M}_{[Z,T]} = [\tilde{M}_{Z}, \tilde{M}_{T}]$ for all
$Z,T \in \overline{\mathfrak g}^{(p-2)}$ we obtain
that $\tilde{M}_{Z}$ is an upper triangular matrix with
vanishing principal diagonal for any $Z \in (\overline{\mathfrak g}^{(p-2)})^{(1)}$.
Indeed any element $Z$ of $(\overline{\mathfrak g}^{(p-2)})^{(1)}$ is of the form
\[ Z= b_{1} Y_{1}+ \hdots + b_{q} Y_{q} + \alpha_{1} W_{1} + \hdots + \alpha_{m} W_{m} \]
with $W_{1}(\alpha_{1})=W_{2}(\alpha_{1})=\hdots=W_{m}(\alpha_{1})=0$
and $\alpha_{1} \in {\mathcal M}_{p+1} \otimes_{\mathbb C} \overline{\mathcal M}_{p}$.
We obtain that
\[ [Z,T] =  \tilde{b}_{1} Y_{1}+ \hdots + \tilde{b}_{q} Y_{q} +
\tilde{\alpha}_{2} W_{2} + \hdots +\tilde{\alpha}_{m} W_{m} \]
for all $Z,T \in (\overline{\mathfrak g}^{(p-2)})^{(1)}$.
This leads us to
\[ \dim_{\overline{K}_{n}} [(\overline{\mathfrak g}^{(p-2)})^{(2)} \otimes_{\mathbb C}
\overline{K}_{n}] < \kappa (p). \]
We obtain
\[  \dim_{\hat{K}_{n}} [(\overline{\mathfrak g}^{(p-2)})^{(2)} \otimes_{\mathbb C}
\hat{K}_{n}] =
 \dim_{\overline{K}_{n}} [(\overline{\mathfrak g}^{(p-2)})^{(2)} \otimes_{\mathbb C}
\overline{K}_{n}] < \kappa (p).
\]
The algebra $\overline{\mathfrak g}^{(p)}$ is the closure of
$(\overline{\mathfrak g}^{(p-2)})^{(2)}$ in the Krull topology (lemma \ref{lem:normal}).
The formula
\[  \kappa(p) = dim_{\hat{K}_{n}} (\overline{\mathfrak g}^{(p)} \otimes_{\mathbb C}
\hat{K}_{n}) =
dim_{\hat{K}_{n}} [(\overline{\mathfrak g}^{(p-2)})^{(2)} \otimes_{\mathbb C}
\hat{K}_{n}] < \kappa(p) \]
represents a contradiction.
\end{proof}
Our next goal is proving prop. \ref{pro:main1}.
The only difference with the general connected case is that
$\kappa (0)=n$ implies $\kappa(1) < n$.
\begin{lem}
\label{lem:eigen}
Let $X \in \hat{\mathcal X}_{N} \cn{n}$. Suppose that there exist
$\lambda \in {\mathbb C}$ and $0 \neq h  \in \hat{K}_{n}$ such that
$X(h)=\lambda h$. Then we obtain $\lambda = 0$.
\end{lem}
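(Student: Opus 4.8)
The plan is to reduce the statement about the full formal vector field $X$ to a statement about its linear part, where the nilpotency hypothesis can be exploited on finite dimensional spaces. Write $h = f/g$ with $f, g \in {\mathbb C}[[x_1,\ldots,x_n]]$; then the relation $X(h) = \lambda h$ is equivalent to $X(f)\,g - f\,X(g) = \lambda\,f\,g$. The main obstacle is precisely that $h$ is a quotient, an element of $\hat{K}_{n}$ rather than a power series, so one cannot simply look at a lowest order homogeneous eigenvector as one would for a convergent eigenfunction; instead I would pass to leading homogeneous forms while keeping careful track of the denominator.

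First I would decompose $X = X^{(1)} + X^{(2)} + \cdots$ into homogeneous components, where $X^{(1)} = j^{1} X$ is the linear part; since $X \in \hat{\mathcal X}_{N} \cn{n}$ the endomorphism $X^{(1)}$ is nilpotent. As $X$ is singular at $0$, the component $X^{(j)}$ raises degrees by $j-1$, so $X$ does not lower the order of a series and the homogeneous part of lowest degree $a$ of $X(f)$ equals $X^{(1)}(f_a)$, where $a$ is the order of $f$ and $f_a$ its leading form. Extracting the homogeneous part of degree $a+b$ (with $b$ the order of $g$ and $g_b$ its leading form) from $X(f)\,g - f\,X(g) = \lambda\,f\,g$ gives the polynomial identity $X^{(1)}(f_a)\,g_b - f_a\,X^{(1)}(g_b) = \lambda\,f_a\,g_b$. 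Dividing by $f_a g_b$ in the field ${\mathbb C}(x_1,\ldots,x_n)$ this says exactly that the leading rational form $\phi = f_a/g_b$ is a nonzero element satisfying $X^{(1)}(\phi) = \lambda\,\phi$.

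It then remains to prove the lemma for the nilpotent linear vector field $X^{(1)}$ acting as a derivation of ${\mathbb C}(x_1,\ldots,x_n)$. Writing $\phi = P/Q$ with $P,Q$ coprime homogeneous polynomials, the relation $X^{(1)}(P)\,Q - P\,X^{(1)}(Q) = \lambda\,P\,Q$ shows $Q \mid P\,X^{(1)}(Q)$, hence $Q \mid X^{(1)}(Q)$; since $X^{(1)}$ preserves degrees we get $X^{(1)}(Q) = \mu Q$ for a constant $\mu$. But $X^{(1)}$ acts nilpotently on each finite dimensional space of homogeneous polynomials of fixed degree, because its flow $\exp(t\,X^{(1)})$ acts there through the unipotent matrix $e^{tA}$ (with $A = X^{(1)}$ nilpotent), so its only eigenvalue is $0$ and $\mu = 0$, i.e. $X^{(1)}(Q)=0$. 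Substituting back yields $X^{(1)}(P) = \lambda P$, and the same nilpotency argument forces $\lambda = 0$. The crux of the whole argument is the passage to leading forms in the second paragraph, which converts the quotient problem in $\hat{K}_{n}$ into a finite dimensional eigenvalue problem; an equivalent finishing route is to integrate $X^{(1)}$ to its unipotent flow and observe that $\phi(e^{tA}x) = e^{\lambda t}\phi(x)$ cannot be rational in $t$ unless $\lambda = 0$.
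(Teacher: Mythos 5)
Your argument is correct and follows essentially the same route as the paper: pass to the leading homogeneous forms $f_a$, $g_b$ to reduce to the linear part $j^{1}X$, rewrite the leading rational form as a quotient of coprime homogeneous polynomials, deduce that numerator and denominator are eigenvectors of the nilpotent derivation, and conclude that all eigenvalues vanish. Your extraction of the degree $a+b$ component of $X(f)\,g - f\,X(g) = \lambda f g$ is a slightly more explicit justification of the paper's step ``$(j^{1}X)(f_a/g_b) = \lambda f_a/g_b$'', but the substance is identical.
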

\begin{proof}
Let $h = f/g$ where $f,g \in {\mathbb C}[[x_{1},\hdots,x_{n}]]$
do not have common factors.
Consider  $f = f_{a} + f_{a+1} + \hdots$ and
$g=g_{b} + g_{b+1} + \hdots$ the decompositions of $f$, $g$ in homogeneous components
where $f_{a} \not \equiv 0 \not \equiv g_{b}$.
We obtain $(j^{1} X)(f_{a}/g_{b})= \lambda f_{a}/g_{b}$.
A priori $f_{a}$ and $g_{b}$ can share common factors. Anyway
we can express $f_{a}/g_{b}$ in the form $\alpha / \beta$ where
$\alpha$ and $\beta$ are coprime homogeneous polynomials. Denote $Y = j^{1} X$. We have
\[ Y \left( \frac{\alpha}{\beta} \right) = \lambda \frac{\alpha}{\beta} \Leftrightarrow
Y(\alpha) \beta - \alpha Y(\beta) = \lambda \alpha \beta. \]
The previous equation implies that since $Y(\alpha)$ and $\alpha$ have the same degree
and $\alpha$ divides $Y(\alpha)$ then $Y(\alpha)=\mu \alpha$ for some $\mu \in {\mathbb C}$.
Analogously we obtain $Y(\beta) = \rho \beta$ for some
$\rho \in {\mathbb C}$. Since $Y$ is represented by a
nilpotent matrix then $Y^{n} \equiv 0$. This leads us to
\[ \rho^{n} \beta = Y^{n} (\beta) = 0 = Y^{n} (\alpha) = \mu^{n} \alpha \implies \mu=\rho=0 . \]
Clearly this implies $\lambda=0$.
\end{proof}
\begin{pro}
\label{pro:maxd}
Let ${\mathfrak g}$ a solvable Lie subalgebra of  $\hat{\mathcal X}_{N} \cn{n}$.
Let $p \geq 0$ such that $\kappa(p)>0$. Suppose that
${\mathcal M}_{p} = {\mathbb C}$.
Then we have $\kappa(p+1) < \kappa(p)$.
\end{pro}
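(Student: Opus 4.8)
The plan is to argue by contradiction: I assume $\kappa(p+1)=\kappa(p)=:d>0$ and derive the impossible bound $\kappa(p+1)\le d-1$. First I would exploit solvability to locate a good index. Since $\kappa$ is non-increasing and $\mathfrak g$ is solvable we have $\kappa(l(\mathfrak g))=0$, so $j^{*}:=\max\{j:\kappa(j)=d\}$ is finite, and $j^{*}\ge p+1>p$ because $\kappa(p)=\kappa(p+1)=d$. As $\kappa$ is constant equal to $d$ on $[p,j^{*}]$, the inclusions $\overline{\mathfrak g}^{(j)}\subset\overline{\mathfrak g}^{(p)}$ become equalities after $\otimes_{\mathbb C}\hat{K}_{n}$ there, whence ${\mathcal M}_{j}={\mathcal M}_{p}={\mathbb C}$ for $p\le j\le j^{*}$; in particular ${\mathcal M}_{j^{*}}=\overline{\mathcal M}_{j^{*}}={\mathbb C}$. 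Since $\kappa(j^{*}+1)<\kappa(j^{*})=d$, the setup of this section is available with the distinguished index taken to be $j^{*}$: I fix a base $Y_{1},\hdots,Y_{q}\in\overline{\mathfrak g}^{(j^{*}+1)}$ of $\overline{\mathfrak g}^{(j^{*}+1)}\otimes_{\mathbb C}\hat{K}_{n}$ and complete it with $X_{1},\hdots,X_{m}\in\overline{\mathfrak g}^{(j^{*})}$ to a base of $\overline{\mathfrak g}^{(j^{*})}\otimes_{\mathbb C}\hat{K}_{n}$, where $m=d-\kappa(j^{*}+1)\ge 1$.

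Because $p<j^{*}$ and $\kappa(p)=\kappa(j^{*})$, lemmas \ref{lem:aux2} and \ref{lem:sol} apply with $r=p$: every $Z\in\overline{\mathfrak g}^{(p)}$ is $Z=\sum_{j}b_{j}Y_{j}+\sum_{k}a_{k}X_{k}$ with $a_{k}\in{\mathcal M}_{j^{*}+1}$, the matrix $M_{Z}=(X_{j}(a_{k}))$ has entries in ${\mathcal M}_{j^{*}}={\mathbb C}$, and $Z\mapsto M_{Z}$ is a homomorphism. Hence ${\mathcal G}=\{M_{Z}:Z\in\overline{\mathfrak g}^{(p)}\}$ is a solvable Lie algebra of complex matrices.

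The heart of the argument, and the step I expect to be the main obstacle, is to show that every eigenvalue of every $M_{Z}$ vanishes. Here is how I would produce the eigenvalue equation required by lemma \ref{lem:eigen}. Fix $Z_{0}=\sum_{j}b_{j}Y_{j}+\sum_{k}a_{k}X_{k}\in\overline{\mathfrak g}^{(p)}$, an eigenvalue $\lambda\in{\mathbb C}$ and an eigenvector $c=(c_{1},\hdots,c_{m})\in{\mathbb C}^{m}\setminus\{0\}$, so $\sum_{k}X_{j}(a_{k})c_{k}=\lambda c_{j}$ for all $j$. Setting $h=\sum_{k}c_{k}a_{k}\in\hat{K}_{n}$ and using that the $c_{k}$ are constants gives $X_{j}(h)=\lambda c_{j}$, while $Y_{j}(h)=\sum_{k}c_{k}Y_{j}(a_{k})=0$ since $a_{k}\in{\mathcal M}_{j^{*}+1}$ and $Y_{j}\in\overline{\mathfrak g}^{(j^{*}+1)}$. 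Combining both, $Z_{0}(h)=\sum_{k}a_{k}X_{k}(h)=\lambda h$. If $\lambda\neq 0$ then $h\neq 0$ (else $\lambda c=0$), and as $Z_{0}\in\overline{\mathfrak g}^{(p)}\subset\hat{\mathcal X}_{N}\cn{n}$ is nilpotent, lemma \ref{lem:eigen} forces $\lambda=0$, a contradiction. Thus each $M_{Z}$ is nilpotent. The delicate points are that ${\mathcal M}_{j^{*}}={\mathbb C}$ keeps all scalars complex (so no passage to $\overline{K}_{n}$ is needed and lemma \ref{lem:eigen} applies verbatim) and that $a_{k}\in{\mathcal M}_{j^{*}+1}$, which is exactly what turns $h$ into a genuine eigenvector of the nilpotent field $Z_{0}$.

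Finally I would turn ``all eigenvalues vanish'' into the forbidden drop of $\kappa$. As ${\mathcal G}$ is a solvable algebra of nilpotent complex matrices, Lie's theorem \ref{teo:Lie} supplies a single $N\in GL(m,{\mathbb C})$ making every $NM_{Z}N^{-1}$ strictly upper triangular; writing $(W_{1},\hdots,W_{m})=(X_{1},\hdots,X_{m})N^{T}$ and $Z=\sum_{j}b_{j}Y_{j}+\sum_{k}\alpha_{k}W_{k}$, strict upper triangularity says the first column of the transformed matrix is zero, i.e. $W_{j}(\alpha_{1})=0$ for all $j$ and all $Z\in\overline{\mathfrak g}^{(p)}$. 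Together with $Y_{j}(\alpha_{1})=0$ this forces $\alpha_{1}\in{\mathcal M}_{j^{*}}={\mathbb C}$, so by the bracket formula in the proof of lemma \ref{lem:sol} the $W_{1}$-component $Z(\alpha_{1}')-Z'(\alpha_{1})$ of $[Z,Z']$ vanishes for all $Z,Z'\in\overline{\mathfrak g}^{(p)}$. Hence $[\overline{\mathfrak g}^{(p)},\overline{\mathfrak g}^{(p)}]\otimes_{\mathbb C}\hat{K}_{n}$, and therefore $\overline{\mathfrak g}^{(p+1)}\otimes_{\mathbb C}\hat{K}_{n}$ by lemma \ref{lem:normal}, lies in the span of $Y_{1},\hdots,Y_{q},W_{2},\hdots,W_{m}$, of dimension $d-1$. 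This gives $\kappa(p+1)\le d-1<d$, contradicting $\kappa(p+1)=d$ and establishing $\kappa(p+1)<\kappa(p)$.
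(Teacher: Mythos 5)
Your proof is correct and follows essentially the same route as the paper's: reduce to the index where $\kappa$ first drops, use lemmas \ref{lem:aux2} and \ref{lem:sol} to get a solvable Lie algebra of matrices with entries in ${\mathcal M}={\mathbb C}$, kill all eigenvalues via lemma \ref{lem:eigen}, triangularize by Lie's theorem, and conclude by the dimension count of prop. \ref{pro:intd}. The only difference is bookkeeping — you anchor the basis at $j^{*}$ and take $r=p$, while the paper shifts $p$ to the first drop and takes $r$ one step before it — and your explicit construction of the eigenfunction $h=\sum_k c_k a_k$ fills in a detail the paper leaves implicit.
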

\begin{proof}
It suffices to prove that the properties ${\mathcal M}_{p} = {\mathbb C}$,
$\kappa(p+1) < \kappa(p)$ and
$\kappa(p) = \kappa(p-1)$  are contradictory.
We use the notations in the proof of prop. \ref{pro:intd}.

An element $Z = \sum_{j=1}^{q} b_{j} Y_{j} +\sum_{k=1}^{m} a_{k} X_{k}$
of $\overline{\mathfrak g}^{(p-1)}$ induces a linear mapping
in the $m$ dimensional complex vector space $<a_{1},\hdots,a_{m}>$ whose matrix is ${M}_{Z}$.
The matrix $\tilde{M}_{Z}$ is upper triangular for
$Z \in \overline{\mathfrak g}^{(p-1)}$. The elements in the diagonal are
complex eigenvalues of
$Z $ interpreted as a linear mapping in
the complex space $<a_{1},\hdots,a_{m}>$.
But all such eigenvalues are zero (lemma \ref{lem:eigen}).
Proceeding as in the proof of prop. \ref{pro:intd} we obtain a contradiction.
\end{proof}
\begin{cor}
\label{cor:maxd}
Let ${\mathfrak g}$ a solvable Lie subalgebra of  $\hat{\mathcal X}_{N} \cn{n}$
such that $\kappa (0)=n$. Then we have $\kappa(1) < n$.
\end{cor}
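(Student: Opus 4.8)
The plan is to deduce the corollary directly from proposition \ref{pro:maxd} applied at the level $p=0$. Since $\overline{\mathfrak g}^{(0)} = {\mathfrak g}$ and $\kappa(0)=n>0$, the only hypothesis of proposition \ref{pro:maxd} that remains to be verified is the equality ${\mathcal M}_{0} = {\mathbb C}$; once this is established the proposition immediately yields $\kappa(1) < \kappa(0) = n$, which is exactly the assertion.

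To check ${\mathcal M}_{0} = {\mathbb C}$ I would argue as follows. The space of derivations of $\hat{K}_{n}$ over ${\mathbb C}$ is a $\hat{K}_{n}$-vector space of dimension $n$, with basis $\partial / \partial x_{1}, \hdots, \partial / \partial x_{n}$. By definition $\kappa(0)$ is the dimension over $\hat{K}_{n}$ of the image of ${\mathfrak g} \otimes_{\mathbb C} \hat{K}_{n}$ inside this module of derivations, as witnessed by the fact that the chosen bases in the setup consist of genuine vector fields lying in $\overline{\mathfrak g}^{(p)}$. The hypothesis $\kappa(0)=n$ therefore forces this image to be the whole module; in particular each $\partial / \partial x_{i}$ is a $\hat{K}_{n}$-linear combination of elements of ${\mathfrak g}$. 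Note that this is consistent with the elements of ${\mathfrak g}$ being singular at the origin, since the coefficients of such a combination are allowed to be fractions in $\hat{K}_{n}$.

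Now I would take $g \in {\mathcal M}_{0}$, so that $X(g)=0$ for every $X \in {\mathfrak g}$. Writing $\partial / \partial x_{i} = \sum_{j} c_{j} X_{j}$ with $c_{j} \in \hat{K}_{n}$ and $X_{j} \in {\mathfrak g}$ gives $\partial g / \partial x_{i} = \sum_{j} c_{j} X_{j}(g) = 0$ for every $1 \leq i \leq n$. Hence $g$ is annihilated by all the partial derivatives and is therefore a constant, i.e. $g \in {\mathbb C}$; the reverse inclusion being trivial we conclude ${\mathcal M}_{0} = {\mathbb C}$. I do not expect any genuine obstacle here: the corollary is really the special boundary case $p=0$ of proposition \ref{pro:maxd}, and the only point requiring a little care is the passage from ``$X(g)=0$ for $X \in {\mathfrak g}$'' to ``$X(g)=0$ for $X$ in the $\hat{K}_{n}$-span of ${\mathfrak g}$'', which is immediate from the $\hat{K}_{n}$-linearity of the evaluation $X \mapsto X(g)$, together with the identification of the condition $\kappa(0)=n$ with surjectivity onto the full $n$-dimensional module of derivations of $\hat{K}_{n}$.
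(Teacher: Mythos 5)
Your proof is correct and follows the paper's (implicit) argument: the corollary is the case $p=0$ of Proposition \ref{pro:maxd}, and the only point to supply is that $\kappa(0)=n$ forces ${\mathcal M}_{0}={\mathbb C}$, which you establish by writing each $\partial/\partial x_{i}$ as a $\hat{K}_{n}$-combination of elements of ${\mathfrak g}$ and invoking the standard fact that the only elements of $\hat{K}_{n}$ annihilated by all partial derivatives are the constants. (The phrase ``the space of derivations of $\hat{K}_{n}$ over ${\mathbb C}$ is $n$-dimensional'' should be read as referring to the free rank-$n$ submodule spanned by $\partial/\partial x_{1},\hdots,\partial/\partial x_{n}$, which is where ${\mathfrak g}$ lives and where $\kappa$ is computed; this is harmless and the argument is unaffected.)
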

\begin{proof}[proof of prop. \ref{pro:main1}]
The Lie algebra ${\mathfrak g}$ associated to $\overline{G}^{(0)}$
is contained in $\hat{\mathcal X}_{N} \cn{n}$ by proposition \ref{pro:lie}.
It is solvable (prop. \ref{pro:lieder}).
We obtain $l(G) = l({\mathfrak g}) \leq 2n-1$ by corollary \ref{cor:maxd}
and proposition \ref{pro:intd}.
\end{proof}
\begin{rem}
\label{rem:ing}
Let $G$ be a nilpotent subgroup of $GL(n,K)$ with $K$ algebraically closed.
Then $G$ is semisimple (i.e. every element of $G$ is diagonalizable) if and only
if $G$ is completely reducible (th. 7.7, page 94 \cite{Wehrfritz}).
There exists a classification of maximal irreducible locally nilpotent subgroups
of $GL(n,K)$ (Suprunenko \cite{Suprunenko}).  It implies
$l(G) \leq 1 + \max(m_{1},\hdots,m_{a})$ where
$n=p_{1}^{m_{1}} \hdots p_{a}^{m_{a}}$ is the prime factorization.
In particular $l(G) \leq [\log_{2} n] +1$ for any nilpotent subgroup of
$GL(n,K)$ and the bound is optimal.
This remark is used in the proof of prop. \ref{pro:main2}.
\end{rem}
\begin{proof}[proof of prop. \ref{pro:main2}]
Let us suppose that $G$ is connected (and of course nilpotent).
It suffices to prove that
the sequence $\{ \kappa (j) \}$ associated to its Lie algebra ${\mathfrak g}$
is strictly decreasing.
We claim that $\kappa(p+1) < \kappa(p)$ and
$\kappa(p) = \kappa(p-1)$  are not compatible.
We use the notations in the proof of prop. \ref{pro:intd}.

Any element $Z$ of $\overline{\mathfrak g}^{(p-1)}$ is of the form
\[ Z= b_{1} Y_{1}+ \hdots + b_{q} Y_{q} + \alpha_{1} W_{1} + \hdots + \alpha_{m} W_{m} \]
with $W_{2}(\alpha_{1})=\hdots=W_{m}(\alpha_{1})=0$
and $\alpha_{1} \in {\mathcal M}_{p+1} \otimes_{\mathbb C} \overline{\mathcal M}_{p}$
since $\tilde{M}_{Z}$ is upper triangular.
If $W_{1}(\alpha_{1})=0$ for any choice of $Z$ we reproduce the proof
of prop. \ref{pro:intd} to obtain a contradiction.
From now on we fix an element $Z$ of $\overline{\mathfrak g}^{(p-1)}$
such that $W_{1}(\alpha_{1}) \neq 0$.

Since $X_{1}, \hdots, X_{m} \in \overline{\mathfrak g}^{(p)}$ there exists an element
\[ Z'= \beta_{1} Y_{1}+ \hdots + \beta_{q} Y_{q} + a_{1} W_{1} + \hdots + a_{m} W_{m}
\in \overline{\mathfrak g}^{(p)} \]
such that $a_{1}, \hdots, a_{m} \in \overline{\mathcal M}_{p}$ and
$a_{1} \neq 0$ by equation (\ref{equ:linalg}).
The coefficient in $W_{1}$ of $A_{1}=[Z,Z']$ is equal to $-a_{1} W_{1}(\alpha_{1})$.
The coefficient in $W_{1}$ of $A_{2}=[Z,[Z,Z']]$ is equal to $a_{1} W_{1}(\alpha_{1})^{2}$.
We denote $A_{j+1} = [Z, A_{j}]$. The coefficient in $W_{1}$ of $A_{j}$
is equal to $(-1)^ {j} a_{1} W_{1}(\alpha_{1})^{j}$ for any $j \in {\mathbb N}$.
As a consequence $\overline{\mathfrak g}^{(p-1)}$ is not nilpotent. This contradicts
prop. \ref{pro:lieder}.

Now let us consider a general nilpotent subgroup $G$ of $\diffh{}{n}$.
Since $G_{k}$ is a nilpotent linear group the sets $G_{k,s}$ and $G_{k,u}$ are subgroups of
$G_{k}$ by a theorem of Suprunenko and Ty{\v{s}}kevi{\v{c}}
(see th. 7.11, page 97  \cite{Wehrfritz}).
We deduce that $\overline{G}_{s}^{(0)}$ and $\overline{G}_{u}^{(0)}$
are subgroups of $\overline{G}^{(0)}$.
The semisimple and unipotent parts of a linear algebraic nilpotent group commute
(lemma 7.10, page 96 \cite{Wehrfritz}). We obtain
$[G_{k,s},G_{k,u}] = \{Id\}$ for any $k \in {\mathbb N}$ and then
$[\overline{G}_{s}^{(0)},\overline{G}_{u}^{(0)}] = \{Id\}$.

The mapping $\overline{G}_{s}^{(0)} \to {G}_{1,s}$ that maps a diffeomorphism
to its linear part is an isomorphism of groups.
It is easy to check out  the inequality $l({G}_{1,s}) \leq n$
by using remark \ref{rem:ing}.
The soluble length satisfies
\[ l(G) = l(\overline{G}^{(0)}) = \max (l(\overline{G}_{s}^{(0)}), l (\overline{G}_{u}^{(0)}))
=  \max (l({G}_{1,s})), l (\overline{G}_{u}^{(0)}))  \leq n  \]
by the first part of the proof.
\end{proof}
\begin{rem}
Lie algebras of local vector fields and groups of germs of diffeomorphisms behave
differently than finite dimensional Lie algebras and linear algebraic groups.
For instance a finite dimensional Lie algebra ${\mathfrak g}$ over a field of characteristic
$0$ has a nilpotent first commutator group ${\mathfrak g}^{(1)}$ (see
\cite{Serre.Lie}, pag. 37). The analogue for Lie algebras of formal vector fields
 would imply that $G^{(1)}$ would be nilpotent for any
solvable connected subgroup $G \subset \diffh{}{n}$.
This is false since then the soluble length $l(G)$
of a connected solvable group $G \subset \diffh{}{n}$
would be less or equal than $n+1$ by th.  \ref{teo:mainn} contradicting
theorems \ref{teo:mainc} and \ref{teo:main}.
%
%
\end{rem}
%
%
\section{Examples}
\label{sec:exa}
The bounds provided in propositions \ref{pro:mainc}, \ref{pro:main1}
and \ref{pro:main2} for soluble lengths are optimal. We provide
specific examples of solvable groups with maximum length.

Fix $n \in {\mathbb N}$.
Let ${\mathfrak m}$ the maximal ideal of ${\mathbb C}[[x_{1},\hdots,x_{n}]]$.
We define the vector spaces
\[  U_{j} = \left\{  a(x_{j+1},\hdots,x_{n}) \frac{\partial}{\partial x_{j}} :
a \in {\mathfrak m}^{2} \right\} \ {\rm and} \
U_{n} = \left\{ t x_{n}^{2} \frac{\partial}{\partial x_{n}} : t \in {\mathbb C}
\right\} \]
for any $1 \leq j \leq n-1$. We define
\[ V_{j} = \left\{  x_{j} b(x_{j+1},\hdots,x_{n}) \frac{\partial}{\partial x_{j}} :
b \in {\mathfrak m} \right\}  \ {\rm and} \
V_{n} = \left\{ t x_{n} \frac{\partial}{\partial x_{n}} : t \in {\mathbb C}
\right\} \]
for any $1 \leq j \leq n-1$.
We define ${\mathcal G}_{2n}=0$ and
\[ {\mathcal G}_{2n-2j}= U_{1} \oplus V_{1} \oplus \hdots \oplus U_{j} \oplus V_{j},
\ {\mathcal G}_{2n-(2j+1)}=    {\mathcal G}_{2n-2j} \oplus U_{j+1} . \]
for $0 \leq j \leq n-1$.
All the formal vector fields in the vector spaces
${\mathcal G}_{1}, \hdots, {\mathcal G}_{2n-1}, {\mathcal G}_{2n}$
are nilpotent since they have vanishing first jets.
The proof of next lemma is straightforward.
\begin{lem}
We have
\begin{itemize}
\item $[U_{j},U_{j}]=0$ for any $1 \leq j \leq n$
\item $x_{n}^{2} U_{j} \subset [U_{j},U_{k}] \subset U_{j}$ for any $1 \leq j < k \leq n$.
\item $[V_{j},V_{j}]=0$ for any $1 \leq j \leq n$
\item $x_{k} x_{n} V_{j} \subset [V_{j},V_{k}] \subset V_{j}$ for any $1 \leq j < k \leq n$.
\item  $x_{n} U_{j} \subset [U_{j},V_{j}] \subset U_{j}$ for any $1 \leq j \leq n-1$
and $[U_{n},V_{n}]=U_{n}$.
\item  $x_{k} x_{n} U_{j} \subset [U_{j},V_{k}] \subset U_{j}$ for any $1 \leq j < k \leq n$.
\item $x_{n}^{2} V_{j} \subset [V_{j},U_{k}] \subset V_{j}$ for any $1 \leq j < k \leq n$.
\end{itemize}
\end{lem}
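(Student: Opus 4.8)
The plan is to reduce each of the seven items to the elementary commutator formula
\[
\left[\,f\,\frac{\partial}{\partial x_i},\ g\,\frac{\partial}{\partial x_l}\,\right]
= f\,\frac{\partial g}{\partial x_i}\,\frac{\partial}{\partial x_l}
- g\,\frac{\partial f}{\partial x_l}\,\frac{\partial}{\partial x_i},
\]
combined with two structural observations about the spaces just defined. First, every element of $U_j$ has coefficient function depending only on $x_{j+1},\hdots,x_n$ and lying in ${\mathfrak m}^2$, while every element of $V_j$ is $x_j$ times a function of $x_{j+1},\hdots,x_n$ lying in ${\mathfrak m}$; in particular such coefficients are annihilated by $\partial/\partial x_i$ for every $i\le j$. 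Second, multiplication by $x_n^2$, by $x_k x_n$, or by $x_n$ keeps one inside ${\mathfrak m}^2$ (resp. ${\mathfrak m}$), so the subspaces appearing on the left of the inclusions are genuine subspaces of $U_j$ or $V_j$. These two facts are what make all the partial derivatives in the formula either vanish or stay in the correct ideal.

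For the single-index items $[U_j,U_j]=0$ and $[V_j,V_j]=0$ I would substitute two generators and check that the two surviving summands cancel: a coefficient free of $x_j$ is killed by $\partial/\partial x_j$, and for $V_j$ the Leibniz rule applied to $x_j b$ leaves only the $b$-term. For the mixed brackets with $j<k$ the computation is even shorter, because the coefficient attached to the $x_k$-indexed factor (namely $c(x_{k+1},\hdots,x_n)$ for $U_k$, or $x_k d$ for $V_k$) is free of $x_j$, so the term carrying $\partial/\partial x_k$ disappears and one is left with a single summand $(\,\cdot\,)\,\partial/\partial x_j$. Reading off that summand and checking membership in ${\mathfrak m}^2$ (resp. that it is $x_j$ times an element of ${\mathfrak m}$) via the ideal bookkeeping of the first paragraph establishes all the upper inclusions $[\,\cdot\,,\cdot\,]\subset U_j$ or $\subset V_j$, as well as $[U_j,V_j]\subset U_j$, at one stroke.

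The lower inclusions such as $x_n^2 U_j\subset[U_j,U_k]$, $x_k x_n V_j\subset[V_j,V_k]$, $x_k x_n U_j\subset[U_j,V_k]$ and $x_n^2 V_j\subset[V_j,U_k]$ are the only part needing an idea. Given a prescribed target, say $x_n^2 a\,\partial/\partial x_j$, I would fix a distinguished second factor (for instance $x_n^2\,\partial/\partial x_k\in U_k$, or $x_n\,\partial/\partial x_k$, or $x_j x_n\,\partial/\partial x_j\in V_j$) chosen so that the single surviving summand of the bracket becomes $-(\text{that coefficient})\,\frac{\partial f}{\partial x_k}\,\partial/\partial x_j$, and then solve the resulting first-order equation $\partial f/\partial x_k=-a$ by formal integration in $x_k$, term by term in the power series. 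The antiderivative still depends only on $x_{j+1},\hdots,x_n$ and, since integration raises the $x_k$-degree by one, still lies in ${\mathfrak m}^2$ (resp. ${\mathfrak m}$); hence it names an element of $U_j$ (resp. $V_j$) whose bracket with the chosen factor is exactly the prescribed target. The same recipe, with $x_k$ replaced by the appropriate variable, covers the $V$-cases.

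The genuine obstacle is not conceptual but a matter of boundary bookkeeping: the families $U_n$ and $V_n$ are the one-dimensional Euler-type spaces $t x_n^2\,\partial/\partial x_n$ and $t x_n\,\partial/\partial x_n$, so the cases $k=n$, and $j=n$ giving the sharp equality $[U_n,V_n]=U_n$, must be handled separately. There the bracket of two Euler fields is computed directly and the ``integration'' step degenerates into choosing a single scalar and an antiderivative in $x_n$ rather than in $x_k$. I expect these boundary computations, together with confirming that the constructed antiderivative lands in the correct subspace, to be where the only care is needed, though each verification is routine.
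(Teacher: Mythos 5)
Your proposal is correct and is exactly the ``straightforward'' verification the paper has in mind (the paper omits the proof entirely, stating only that it is straightforward): the commutator formula for $f\,\partial/\partial x_i$ and $g\,\partial/\partial x_l$, the vanishing of $\partial/\partial x_j$ on coefficients depending only on $x_{j+1},\hdots,x_n$, ideal bookkeeping for the upper inclusions, and formal antidifferentiation in $x_k$ against a distinguished second factor for the lower inclusions, with the $k=n$ and $j=n$ cases checked separately against the one-dimensional spaces $U_n$ and $V_n$. I verified the computations your plan calls for and they all go through, so nothing is missing.
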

\begin{cor}
\label{cor:res}
We have $x_{n}^{2} {\mathcal G}_{2} + U_{n} \subset {\mathcal G}_{0}^{(1)}$ and
$x_{n}^{2}  {\mathcal G}_{j+1} \subset {\mathcal G}_{j}^{(1)} \subset {\mathcal G}_{j+1}$
for any $1 \leq j \leq 2n-1$.
Then $x_{n}^{c_{j}} {\mathcal G}_{j} \subset {\mathcal G}_{0}^{(j)} \subset  {\mathcal G}_{j}$
where $c_{j} = 2^{j} + 2^ {j-2}-2$ for any  $2 \leq j \leq 2n$.
Moreover the length $l({\mathcal G}_{0})$ of ${\mathcal G}_{0}$ is equal to $2n$.
\end{cor}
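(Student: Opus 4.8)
The plan is to read everything off the bracket relations of the preceding lemma, organised around the two-step filtration. Recall ${\mathcal G}_{2n-2s}=\bigoplus_{a=1}^{s}(U_{a}\oplus V_{a})$ and ${\mathcal G}_{2n-2s-1}={\mathcal G}_{2n-2s}\oplus U_{s+1}$, so that ${\mathcal G}_{j+1}$ is obtained from ${\mathcal G}_{j}$ by deleting its top summand (either a $V_{s}$ or a $U_{s+1}$). First I would prove ${\mathcal G}_{j}^{(1)}\subset{\mathcal G}_{j+1}$ for $0\le j\le 2n-1$: inspecting the lemma, the top summand $V_{s}$ (resp. $U_{s+1}$) can only be produced by a bracket using a factor of index larger than $s$ (resp. larger than $s+1$), or, for $U_{s+1}$, by $[U_{s+1},V_{s+1}]$; none of these occurs because no such index and no $V_{s+1}$ belong to ${\mathcal G}_{j}$. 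This simultaneously shows each ${\mathcal G}_{j}$ is a Lie subalgebra. For the reverse inclusion $x_{n}^{2}{\mathcal G}_{j+1}\subset{\mathcal G}_{j}^{(1)}$ I would exhibit, for each surviving summand, one explicit bracket from the lemma: $x_{n}^{2}U_{a}\subset[U_{a},U_{b}]$ and $x_{n}^{2}V_{a}\subset[V_{a},U_{b}]$ with $b$ a larger index present, together with $x_{n}^{2}U_{s}\subset x_{n}U_{s}\subset[U_{s},V_{s}]$ for the extreme term, and at the bottom the stronger equality $[U_{n},V_{n}]=U_{n}$ which gives the refined statement $U_{n}\subset{\mathcal G}_{0}^{(1)}$.

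The inclusion ${\mathcal G}_{0}^{(j)}\subset{\mathcal G}_{j}$ then follows by induction: ${\mathcal G}_{0}^{(1)}\subset{\mathcal G}_{1}$ is the case $j=0$ above, and ${\mathcal G}_{0}^{(j+1)}=({\mathcal G}_{0}^{(j)})^{(1)}\subset{\mathcal G}_{j}^{(1)}\subset{\mathcal G}_{j+1}$ by monotonicity of the derived operation. In particular ${\mathcal G}_{0}^{(2n)}\subset{\mathcal G}_{2n}=0$.

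For the lower bound $x_{n}^{c_{j}}{\mathcal G}_{j}\subset{\mathcal G}_{0}^{(j)}$ the engine of the induction is this observation: for $j\ge 2$ the algebra ${\mathcal G}_{j}$ contains no $U_{n}$ or $V_{n}$ summand, so every $X\in{\mathcal G}_{j}$ satisfies $X(x_{n})=0$ and hence $[x_{n}^{c}X,x_{n}^{c}Y]=x_{n}^{2c}[X,Y]$ with all correction terms vanishing. Combining this with the explicit brackets realizing $x_{n}^{2}{\mathcal G}_{j+1}\subset{\mathcal G}_{j}^{(1)}$ gives $x_{n}^{2c_{j}+2}{\mathcal G}_{j+1}\subset{\mathcal G}_{0}^{(j+1)}$, i.e. the recursion $c_{j+1}=2c_{j}+2$, whose solution with $c_{2}=3$ is exactly $c_{j}=2^{j}+2^{j-2}-2$. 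The hard part will be the base case $j=2$, where the correction terms do \emph{not} vanish: here ${\mathcal G}_{0}^{(1)}$ still contains the full $U_{n}$, and the useful bracket is with $W=x_{n}^{2}\,\partial/\partial x_{n}\in U_{n}$. For $X=c\,\partial/\partial x_{a}\in{\mathcal G}_{2}$ a direct computation yields
\[
 [x_{n}^{2}X,W] = -x_{n}^{3}\left(\left(x_{n}\tfrac{\partial}{\partial x_{n}}+2\right)c\right)\frac{\partial}{\partial x_{a}} .
\]
The Euler operator $x_{n}\,\partial/\partial x_{n}+2$ multiplies each monomial of $c$ by (its $x_{n}$-degree)$\,+\,2\ge 2$, so it is invertible on the coefficient spaces defining $U_{a}$ and $V_{a}$; letting $X$ range over ${\mathcal G}_{2}$ therefore produces every element of $x_{n}^{3}{\mathcal G}_{2}$, giving $x_{n}^{3}{\mathcal G}_{2}\subset{\mathcal G}_{0}^{(2)}$.

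Finally I would pin down the length from the two bounds. The upper bound ${\mathcal G}_{0}^{(2n)}\subset{\mathcal G}_{2n}=0$ gives $l({\mathcal G}_{0})\le 2n$, while $x_{n}^{c_{2n-1}}{\mathcal G}_{2n-1}=x_{n}^{c_{2n-1}}U_{1}\neq 0$ together with $x_{n}^{c_{2n-1}}{\mathcal G}_{2n-1}\subset{\mathcal G}_{0}^{(2n-1)}$ shows ${\mathcal G}_{0}^{(2n-1)}\neq 0$, hence $l({\mathcal G}_{0})\ge 2n$ and equality holds. The only genuinely delicate point is the Euler-operator invertibility in the base case; every other step is a bookkeeping application of the bracket relations and of the monotonicity of the derived series.
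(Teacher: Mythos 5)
Your proof is correct and follows essentially the same route as the paper's: the same bracket bookkeeping for the inclusions ${\mathcal G}_{j}^{(1)} \subset {\mathcal G}_{j+1}$ and $x_{n}^{2}{\mathcal G}_{j+1} \subset {\mathcal G}_{j}^{(1)}$, the same recursion $c_{j+1}=2c_{j}+2$ (valid because elements of ${\mathcal G}_{j}$ for $j \geq 2$ annihilate $x_{n}$), and the same use of $x_{n}^{c_{2n-1}}U_{1} \neq 0$ to pin down the length. Your explicit computation $[x_{n}^{2}X,W]=-x_{n}^{3}\bigl((x_{n}\partial/\partial x_{n}+2)c\bigr)\partial/\partial x_{a}$ with the invertibility of the Euler-type operator is exactly the step the paper leaves implicit in deducing $x_{n}^{3}{\mathcal G}_{2} \subset {\mathcal G}_{0}^{(2)}$, and you handle it correctly.
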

 \begin{proof}
 The property ${\mathcal G}_{j}^{(1)} \subset {\mathcal G}_{j+1}$
for any $0 \leq j \leq 2n-1$ is an inmediate consequence of the construction.
Hence we obtain $l({\mathcal G}_{0}) \leq 2n$.

  We have
 \[  U_{n} = [U_{n},V_{n}] ,  \
x_{n}^{2} U_{j} \subset  [U_{j},V_{n}] \ {\rm and} \  \ x_{n}^{2} V_{j}  \subset [V_{j},V_{n}] \]
 for any $1 \leq j  < n$.
Thus ${\mathcal G}_{0}^{(1)}$ contains $x_{n}^{2} {\mathcal G}_{2} + U_{n}$.
Then $l({\mathcal G}_{0})$ is equal to $2$ if $n=1$.

 Given $1 \leq k \leq n$ we have
 \[ x_{n}^{2} U_{j} \subset [U_{j},U_{k}] \ {\rm and} \ x_{n}^{2} V_{j} \subset [V_{j},U_{k}] \]
 for any $1 \leq j  < k$.
 Thus we obtain that ${\mathcal G}_{0}^{(2)}$ contains $x_{n}^{3} {\mathcal G}_{2}$.
 Moreover
 ${\mathcal G}_{2n-(2k-1)}^{(1)}$ contains $x_{n}^{2} {\mathcal G}_{2n-2(k-1)}$
 for any $1 \leq k \leq n$.

   Given $1 \leq k \leq n-1$ we have
 \[ x_{n} U_{k} \subset [U_{k},V_{k}] ,  \
x_{n}^{2} U_{j} \subset  [U_{j},U_{k}] \ {\rm and} \  \ x_{n}^{2} V_{j}  \subset [V_{j},U_{k}] \]
 for any $1 \leq j  < k$.
Thus ${\mathcal G}_{2n-2k}^{(1)}$ contains $x_{n}^{2} {\mathcal G}_{2n-2k+1}$
 for any $1 \leq k \leq n$. This leads us to
 \[ x_{n}^ {3}  {\mathcal G}_{2} \subset {\mathcal G}_{0}^{(2)} \implies
 x_{n}^ {2 \cdot 3 +2}  {\mathcal G}_{3} \subset {\mathcal G}_{0}^{(3)}  \implies
 \hdots \implies  x_{n}^ {c_{2n-1}}  {\mathcal G}_{2n-1} \subset {\mathcal G}_{0}^{(2n-1)}  \]
 for $n \geq 2$. We deduce that $l({\mathcal G}_{0})$ is equal to $2n$.
\end{proof}
\begin{rem}
The Lie algebra ${\mathcal G}_{0} \cap {\mathcal X} \cn{n}$ has soluble length $2n$.
It provides the example completing the proof of theorem \ref{teo:mainca}.
\end{rem}
\begin{defi}
We define the subgroup $H_{0}$ of $\diffh{}{n}$ of diffeomorphisms of the form
\[
(a_{1}(x_{2},\hdots,x_{n}) + x_{1} b_{1}(x_{2},\hdots,x_{n}), \hdots,
a_{n-1}(x_{n}) + x_{n-1} b_{n-1}(x_{n}), h^{\lambda,\mu}(x_{n}) ) \]
where $h^{\lambda,\mu}(x_{n})= \lambda x_{n} / (1 + \mu x)$,
$\lambda \in {\mathbb C}^{*}$, $\mu \in {\mathbb C}$,
$b_{j} -1 \in {\mathfrak m}$ and $a_{j} \in {\mathfrak m}^{2}$
for any $1 \leq j \leq n-1$. Let us notice that the families
${\{ h^{\lambda,\mu} \}}_{(\lambda,\mu) \in {\mathbb C}^{*} \times {\mathbb C}}$
and
${\{ h_{s,t} \}}_{(s,t) \in {\mathbb C} \times {\mathbb C}}$ coincide
where $h_{s,t}(x_{n})={\rm exp}((s x_{n} +t x_{n}^{2}) \partial /\partial x_{n})$.
\end{defi}
\begin{pro}
\label{pro:exa}
Fix $n \in {\mathbb N}$. Then
$H_{0}$ is a connected solvable group contained in  $\diffh{}{n}$ with
$l(H_{0})=2n$. Denote $G_{0} = H_{0} \cap \diff{}{n}$. Then $G_{0}$ is a
connected solvable group contained in $\diff{}{n}$ such that $l(G_{0})=2n$.
\end{pro}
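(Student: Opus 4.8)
The plan is to compute $l(H_0)$ by transporting the length computation already carried out for the Lie algebra ${\mathcal G}_0$ in corollary \ref{cor:res} through the group--Lie-algebra dictionary of section \ref{sec:lie}. First I would verify that $H_0$ is a subgroup of $\diffh{}{n}$: the triangular shape of its elements, in which the $j$-th coordinate depends only on $x_j,\hdots,x_n$, is stable under composition and inversion, and the only nonabelian ``layer'' is the last coordinate, where $\{ h^{\lambda,\mu} \}$ is exactly the soluble group $\lambda x_n/(1+\mu x_n)$ of the introduction. Then I would note that $j^1 H_0 = \{ \mathrm{diag}(1,\hdots,1,\lambda) : \lambda \in {\mathbb C}^{*} \} \cong {\mathbb C}^{*}$ is connected, so $H_0$ is connected in the sense of this paper and proposition \ref{pro:lieder} is available.

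The core step is to identify the Lie algebra ${\mathfrak g}$ of $\overline{H_0}^{(0)}$ with ${\mathcal G}_0$. Integrating a field $X = \sum_j (a_j + x_j b_j)\,\partial/\partial x_j \in {\mathcal G}_0$ from the bottom coordinate upward, each $x_j(\tau)$ solves a scalar linear ODE whose coefficients are functions of $x_{j+1},\hdots,x_n$ already determined at the previous steps; hence $\exp(X)$ again has the defining shape of $H_0$, so $\exp({\mathcal G}_0) \subseteq H_0$ and therefore ${\mathcal G}_0 \subseteq {\mathfrak g}$. For the reverse inclusion one uses that $\overline{H_0}^{(0)} = H_0$: then every $X \in {\mathfrak g}$ satisfies $\exp(tX) \in H_0$ for all $t$, and differentiating the $H_0$-form at $t=0$ (using $a_j \in {\mathfrak m}^2$, $b_j - 1 \in {\mathfrak m}$, and the identity $h_{s,t} = \exp((s x_n + t x_n^2)\partial/\partial x_n)$ on the last coordinate) forces $X \in {\mathcal G}_0$. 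With ${\mathfrak g} = {\mathcal G}_0$, proposition \ref{pro:lieder} shows that $\overline{\mathfrak g}^{(k)}$ is the Lie algebra of $\overline{H_0}^{(k)}$ for every $k$; since $l({\mathcal G}_0) = 2n$ (corollary \ref{cor:res}) we have ${\mathcal G}_0^{(2n-1)} \neq 0 = {\mathcal G}_0^{(2n)}$, so $\overline{\mathfrak g}^{(2n-1)} \neq 0$ while $\overline{\mathfrak g}^{(2n)} = 0$. As the derived groups of the connected group $\overline{H_0}^{(0)}$ are again connected, a trivial Lie algebra forces triviality, giving $\overline{H_0}^{(2n)} = \{Id\}$ and $\overline{H_0}^{(2n-1)} \neq \{Id\}$; by lemma \ref{lem:elpro} this reads $l(H_0) = l(\overline{H_0}^{(0)}) = 2n$. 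Alternatively, once $H_0$ is known to be connected and solvable the upper bound $l(H_0) \le 2n$ is immediate from proposition \ref{pro:mainc}, and only the lower bound ${\mathcal G}_0 \subseteq {\mathfrak g}$ with ${\mathcal G}_0^{(2n-1)} \neq 0$ is needed.

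For the convergent group $G_0 = H_0 \cap \diff{}{n}$, it is a subgroup of $\diff{}{n}$, and it is connected since every multiplier $\lambda \in {\mathbb C}^{*}$ is realized by the convergent element $(x_1,\hdots,x_{n-1},\lambda x_n) \in G_0$, whence $j^1 G_0 = j^1 H_0$. From $G_0 \subseteq H_0$ we get $l(G_0) \le l(H_0) = 2n$. For the opposite inequality I would use that flows of germs of convergent vector fields are convergent, so $\exp({\mathcal G}_0 \cap {\mathcal X} \cn{n}) \subseteq G_0$ and the Lie algebra of $\overline{G_0}^{(0)}$ contains ${\mathcal G}_0 \cap {\mathcal X} \cn{n}$, which has soluble length $2n$ by the remark following corollary \ref{cor:res}. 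Running the previous argument (its $(2n-1)$-th derived algebra is nonzero and, up to Krull closure, is the Lie algebra of $\overline{G_0}^{(2n-1)}$) gives $l(G_0) \ge 2n$, hence $l(G_0) = 2n$.

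I expect the main obstacle to be the dictionary step $\overline{H_0}^{(0)} = H_0$: one must check that $H_0$ is closed in the Krull topology and that for each $k$ the image $C_k$ of $H_0$ acting on $k$-jets is already Zariski-closed, so that $G_k = C_k$ in definition \ref{def:liegrp} and no extraneous elements appear in the algebraic closure. The delicate point is the last coordinate, where $\{ j^k h^{\lambda,\mu} \}$ has to be recognized as a two-dimensional closed algebraic subgroup of the jet group; the other layers are cut out by the closed linear conditions $a_j \in {\mathfrak m}^2$ and $b_j - 1 \in {\mathfrak m}$. The remaining ingredients — the group axioms, connectedness, and the bottom-up flow computation — are routine consequences of the triangular structure, and the solvability of $H_0$ is subsumed by the identification ${\mathfrak g} = {\mathcal G}_0$.
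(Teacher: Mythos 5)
Your argument for $H_{0}$ is correct and follows the paper's own route: verify $\overline{H_{0}}^{(0)}=H_{0}$, check connectedness via $j^{1}H_{0}\cong{\mathbb C}^{*}$, identify the Lie algebra ${\mathfrak g}$ with ${\mathcal G}_{0}$ by integrating upward for one inclusion and differentiating at $t=0$ (together with the identity $h_{s,t}={\rm exp}((sx_{n}+tx_{n}^{2})\partial/\partial x_{n})$) for the other, and then invoke propositions \ref{pro:lie}, \ref{pro:lieder} and corollary \ref{cor:res}. Where you genuinely diverge is the convergent group $G_{0}$. The paper proves $l(G_{0})=2n$ by exhibiting, for each $\alpha\in H_{0}$, the truncations $(j^{k}\alpha_{1},\hdots,j^{k}\alpha_{n-1},\alpha_{n})\in G_{0}$ converging to $\alpha$ in the Krull topology; hence $H_{0}=\overline{G_{0}}$ and the closed derived series of $G_{0}$ and $H_{0}$ coincide, so the computation for $H_{0}$ transfers verbatim. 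You instead obtain the lower bound from the convergent subalgebra ${\mathcal G}_{0}\cap{\mathcal X}\cn{n}$: its exponentials land in $G_{0}$, it has soluble length $2n$ by the remark following corollary \ref{cor:res}, and proposition \ref{pro:lieder} applied to the connected group $\overline{G_{0}}^{(0)}$ (whose Lie algebra contains ${\mathcal G}_{0}\cap{\mathcal X}\cn{n}$) together with lemma \ref{lem:elpro} gives $l(G_{0})\geq 2n$. Both arguments work; the paper's truncation is more self-contained (it does not lean on the unproved-in-detail remark that the convergent subalgebra already realizes length $2n$, only on corollary \ref{cor:res} itself), while yours avoids having to show that $H_{0}$ is the Krull closure of $G_{0}$ and isolates exactly what is needed, namely a convergent witness for the nonvanishing of the $(2n-1)$-th derived object. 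If you keep your route, you should make explicit that the brackets used in corollary \ref{cor:res} to certify ${\mathcal G}_{0}^{(2n-1)}\neq 0$ can be taken among polynomial, hence convergent, vector fields, so that $({\mathcal G}_{0}\cap{\mathcal X}\cn{n})^{(2n-1)}\neq 0$ as the remark asserts.
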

\begin{proof}
It is clear from the construction that in every jet space the group $(H_{0})_{k}$
is algebraic. The group $H_{0}$ is closed in the Krull topology, we obtain
$\overline{H_{0}}^{(0)}= H_{0}$.
Moreover $H_{0}$ is connected since $j^{1} H_{0} \sim {\mathbb C}^{*}$

We want to calculate the Lie algebra ${\mathfrak g}$ associated to $H_{0}$.
We obtain ${\mathcal G}_{0} \subset {\mathfrak g}$ by using
the exponential formula (\ref{equ:exp}).
Given $1 \leq j \leq n-1$ and $X \in {\mathfrak g}$ we have that
$(x_{j} \circ {\rm exp}(t X) - x_{j})/t$ is of the form
\[ a_{j}(x_{j+1},\hdots,x_{n})  + x_{j} b_{j}(x_{j+1},\hdots,x_{n}) \ \
(a_{j} \in {\mathfrak m}^{2}, b_{j} \in {\mathfrak m})  \]
for any  $t \in {\mathbb C}^{*}$. By taking the limit when $t \to 0$ we
deduce that $X(x_{j})$ is of the form
$\alpha_{j}(x_{j+1},\hdots,x_{n})  + x_{j} \beta_{j}(x_{j+1},\hdots,x_{n})$
for some $\alpha_{j}  \in {\mathfrak m}^{2}$ and $\beta_{j} \in {\mathfrak m}$
and any $1 \leq j \leq n-1$.

The $2$-dimensional Lie algebra $U_{n}  + V_{n}$ is the Lie algebra of the
$2$-dimensional group
${\{ h^{\lambda,\mu} \}}_{(\lambda,\mu) \in {\mathbb C}^{*} \times {\mathbb C}}$.
The results of the two last paragraphs imply that ${\mathfrak g}$ is contained
in ${\mathcal G}_{0}$. Hence we get ${\mathfrak g} = {\mathcal G}_{0}$.

The equality
$l(H_{0}) = l({\mathfrak g}) = l({\mathcal G}_{0}) = 2n$ is a consequence of
prop.\ref{pro:lie}, \ref{pro:lieder} and cor. \ref{cor:res}.
Let $\alpha =(\alpha_{1}, \hdots, \alpha_{n}) \in H_{0}$. We define
\[ \alpha_{k} =(j^{k} \alpha_{1}, \hdots, j^{k} \alpha_{n-1}, \alpha_{n}) \]
for $k \geq 2$.
It is clear that $\alpha_{k}$ belongs to $G_{0}$ for any
$k \geq 2$ and that $\alpha_{k}$ converges to $\alpha$ in the Krull topology when $k \to \infty$.
Thus $H_{0}$ is the closure $\overline{G}_{0}$ of $G_{0}$ in the Krull topology. We have
\[ \overline{G_{0}^{(2n)}} = \overline{H}_{0}^{(2n)} = \{Id \} \ {\rm and} \
 \overline{G_{0}^{(2n-1)}} = \overline{H}_{0}^{(2n-1)} \neq  \{Id \} . \]
Thus $G_{0}$ is a connected solvable group such that $l(G_{0})=2n$.
%
%
%
\end{proof}
\begin{cor}
\label{cor:exa}
Fix $n \in {\mathbb N}$. Then $G_{0}^{(1)}$ is a
unipotent solvable subgroup of $\diff{}{n}$ such that $l(G_{0}^{(1)})=2n-1$.
\end{cor}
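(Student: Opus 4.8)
The plan is to derive everything from the two facts already established in proposition \ref{pro:exa}: that $G_0$ is a connected solvable subgroup of $\diff{}{n}$ with $l(G_0)=2n$, and that the linear part satisfies $j^{1} H_0 \sim {\mathbb C}^{*}$, so that $j^{1} G_0$ is abelian. Almost nothing new needs to be proved; the work is in correctly assembling the upper and lower bounds packaged in proposition \ref{pro:exa} and corollary \ref{cor:res}.

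First I would establish unipotence. Since $G_0 \subset H_0$, the group $j^{1} G_0$ of linear parts is a subgroup of the abelian group $j^{1} H_0 \sim {\mathbb C}^{*}$. Because $j^{1}$ is a homomorphism of groups, every commutator satisfies $j^{1}[\alpha,\beta]=[j^{1}\alpha,j^{1}\beta]=Id$ for $\alpha,\beta \in G_0$; that is, each generator of $G_0^{(1)}=[G_0,G_0]$ is tangent to the identity. As the diffeomorphisms tangent to the identity form a subgroup of $\diff{}{n}$ (they are closed under composition and inverse at the level of first jets), every element of $G_0^{(1)}$ is tangent to the identity, hence unipotent, so $G_0^{(1)} \subset \diff{u}{n}$. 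Moreover $G_0^{(1)}$ is a subgroup of $G_0 \subset \diff{}{n}$, so it consists of convergent diffeomorphisms.

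Next I would pin down the soluble length. Solvability of $G_0^{(1)}$ is immediate, being a subgroup of the solvable group $G_0$. For the exact value I would use the shift identity $(G_0^{(1)})^{(k)}=G_0^{(k+1)}$ for all $k \geq 0$, which is a direct consequence of the recursive definition of the derived series (def. \ref{def:com}). Hence $(G_0^{(1)})^{(k)}=\{Id\}$ if and only if $G_0^{(k+1)}=\{Id\}$, i.e. if and only if $k+1 \geq l(G_0)=2n$, which yields $l(G_0^{(1)})=2n-1$.

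The only point requiring genuine care is that $2n-1$ is the \emph{exact} length rather than merely an upper bound. The inequality $l(G_0^{(1)}) \leq 2n-1$ follows from $G_0^{(2n)}=\{Id\}$ alone, but the sharp equality needs the lower bound $G_0^{(2n-1)} \neq \{Id\}$. I would extract this from the computation $\overline{G_0^{(2n-1)}}=\overline{H}_0^{(2n-1)} \neq \{Id\}$ in the proof of proposition \ref{pro:exa}: since the trivial group is closed in the Krull topology, $G_0^{(2n-1)}=\{Id\}$ would force its closure to be trivial, a contradiction. Thus there is no real obstacle beyond invoking both bounds already available, and the corollary follows.
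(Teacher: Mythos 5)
Your proof is correct and follows essentially the same route as the paper's (much terser) argument: the paper likewise obtains $l(G_{0}^{(1)})=l(G_{0})-1=2n-1$ from the shift in the derived series and notes that $G_{0}^{(1)}$ is unipotent because it consists of tangent-to-the-identity diffeomorphisms. Your added care about the lower bound $G_{0}^{(2n-1)}\neq\{Id\}$ merely spells out what is already contained in the equality $l(G_{0})=2n$ from proposition \ref{pro:exa}.
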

\begin{proof}
The group $G_{0}^{(1)}$ is solvable and
\[ l(G_{0}^{(1)})= l (G_{0}) -1=2n-1. \]
Moreover it is unipotent, indeed it is composed of
tangent to the identity diffeomorphisms.
\end{proof}
\subsection{Nilpotent groups}
Now we give an example of an unipotent nilpotent subgroup $G$ of $\diffh{}{n}$ with
$l(G)=n$. In fact it suffices to find a nilpotent Lie algebra ${\mathfrak g}$
of ${\mathcal X}_{N} \cn{n}$ such that, $l({\mathfrak g})=n$. Thus, fix $n \in {\mathbb N}$
and take the following meromorphic functions:
\[
u_{n - 1} = x_n^{-1}, \hspace{0.2cm}u_{n - k} = x_{n - k + 1}^{-2}.u_{n - k + 1}^2,
\hspace{0.2cm} k= 2,\ldots, n-1.
\]
We consider the following vector fields in ${\mathcal X}_{N} \cn{n}$:
\begin{itemize}
\item
$X_1 = u_1^{-1}x_2^2\frac{\partial}{\partial x_1}$
\item
$X_2 = u_2^{-1}(4x_1x_2\frac{\partial}{\partial x_1} + x_2^2\frac{\partial}{\partial x_2})$
\item
$X_3 = u_3^{-1}(-4x_1x_3\frac{\partial}{\partial x_1} - 2x_2x_3\frac{\partial}{\partial x_2}
 + x_3^2\frac{\partial}{\partial x_3})$
\item
$X_k = u_k^{-1}(-2x_{k-1}x_k\frac{\partial}{\partial x_{k-1}} + x_k^2\frac{\partial}{\partial x_k})$,
\hspace{0.2cm} $k= 4,\ldots, n-1$.
\item
$X_n = -x_{n-1}x_n\frac{\partial}{\partial x_{n-1}} + x_n^2\frac{\partial}{\partial x_n}$
\end{itemize}

Now we see that they satisfy the following properties

\begin{lem}
\label{prop. exemplo}
Let $u_k$ and $X_k$ as above, then:
\begin{enumerate}
\item
$[X_k,X_j] = 0$, for $k,j\in\{1,\ldots,n\}$
\item
$X_n(u_{n - 1}) = -1$ and $X_j(u_{n - 1}) = 0$, for $j\in\{1,\ldots,n - 1\}$
\item
$X_j(u_{n - k}) = 0$, $j\neq n - k +1$,  $k\in\{2,\ldots,n - 1\}$
\item
$X_{n - k + 1}(u_{n - k}) = -2x_{n - k + 1}^{-1}.u_{n - k + 1}$ and $X_{n - k + 1}^2(u_{n - k}) = 2$
\item
$X_j(X_{n - k + 1}(u_{n - k})) = 0$, $j\neq n - k +1$, $k\in\{2,\ldots,n - 1\}$
\item
$(X_{n - k + 1}(u_{n - k}))^2 = 4u_{n - k}$.
\end{enumerate}
\end{lem}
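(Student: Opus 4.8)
The plan is to reduce all six assertions to two elementary ingredients: the action of each $X_k$ on the coordinate functions, and the single recursion $u_i=x_{i+1}^{-2}u_{i+1}^2$ (with $u_{n-1}=x_n^{-1}$) defining the $u_i$. From the explicit formulas one reads off that $X_k$ differentiates only the variables $x_{k-1},x_k$, except that $X_3$ also involves $\partial/\partial x_1$; in particular $X_k(x_k)=u_k^{-1}x_k^2$ for $2\le k\le n-1$, $X_n(x_n)=x_n^2$, $X_1(x_1)=u_1^{-1}x_2^2$, and the only remaining nonzero values are the off-diagonal terms $X_k(x_{k-1})=-2u_k^{-1}x_{k-1}x_k$ (with the exceptions $X_2(x_1)=4u_2^{-1}x_1x_2$, $X_3(x_1)=-4u_3^{-1}x_1x_3$, $X_3(x_2)=-2u_3^{-1}x_2x_3$, $X_n(x_{n-1})=-x_{n-1}x_n$). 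Since each $X_k$ is a derivation of $\hat{K}_{n}$, everything then follows by the Leibniz and chain rules together with the recursion.

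I would prove $(2),(3),(4),(6)$ first, by downward induction on $i$ (equivalently upward in $k=n-i$). Property $(2)$ is the base case $i=n-1$: since $u_{n-1}=x_n^{-1}$ one has $X_j(u_{n-1})=-x_n^{-2}X_j(x_n)$, which is $0$ unless $j=n$ and equals $-1$ for $j=n$. For the inductive step write $X_j(u_i)=-2x_{i+1}^{-3}X_j(x_{i+1})\,u_{i+1}^2+2x_{i+1}^{-2}u_{i+1}X_j(u_{i+1})$. When $j=i+1$, the factor $u_{i+1}$ involves only $x_{i+2},\dots,x_n$, none differentiated by $X_{i+1}$, so $X_{i+1}(u_{i+1})=0$; combined with $X_{i+1}(x_{i+1})=u_{i+1}^{-1}x_{i+1}^2$ this gives $X_{i+1}(u_i)=-2x_{i+1}^{-1}u_{i+1}$, which is $(4)$, and a second application yields $X_{i+1}^2(u_i)=2$, while squaring and using the recursion gives $(X_{i+1}(u_i))^2=4x_{i+1}^{-2}u_{i+1}^2=4u_i$, which is $(6)$. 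The only delicate case of $(3)$ is $j=i+2$, where both summands are nonzero: substituting $X_{i+2}(x_{i+1})=-2u_{i+2}^{-1}x_{i+1}x_{i+2}$ and the inductive value $X_{i+2}(u_{i+1})=-2x_{i+2}^{-1}u_{i+2}$ and using $u_{i+1}=x_{i+2}^{-2}u_{i+2}^2$ makes the two terms cancel (the top case $i+2=n$ cancels just the same, using instead $X_n(x_{n-1})=-x_{n-1}x_n$ and $X_n(u_{n-1})=-1$). For every other $j\neq i+1$ one has $X_j(x_{i+1})=0$ and, inductively, $X_j(u_{i+1})=0$, whence $X_j(u_i)=0$.

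Property $(5)$ I would not compute directly: applying the derivation $X_j$ to the identity $(X_{i+1}(u_i))^2=4u_i$ of $(6)$ gives $2X_{i+1}(u_i)\,X_j(X_{i+1}(u_i))=4X_j(u_i)$, whose right-hand side is $0$ for $j\neq i+1$ by $(3)$; since $X_{i+1}(u_i)=-2x_{i+1}^{-1}u_{i+1}\neq0$ in the field $\hat{K}_{n}$, this forces $X_j(X_{i+1}(u_i))=0$. For the commutation relations $(1)$ I would argue coordinatewise, since a formal vector field is determined by its action on $x_1,\dots,x_n$: it suffices to show $[X_k,X_j](x_l)=X_k(X_j(x_l))-X_j(X_k(x_l))=0$ for all $l$. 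Taking $k<j$ by antisymmetry, if $j\ge k+2$ the index sets $\{k-1,k\}$ and $\{j-1,j\}$ on which $X_k$ and $X_j$ act are disjoint, and the only terms that could survive involve $X_j$ differentiating the factor $u_k^{-1}$ occurring in $X_k(x_{k-1}),X_k(x_k)$; these vanish because $X_j(u_k)=0$ for $j\neq k+1$ by $(3)$.

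The genuinely nontrivial case of $(1)$ is the adjacent one $j=k+1$: expanding $[X_k,X_{k+1}](x_l)$ for $l\in\{k-1,k,k+1\}$ produces terms with the factors $X_{k+1}(u_k)=-2x_{k+1}^{-1}u_{k+1}$ and $X_{k+1}(x_k)=-2u_{k+1}^{-1}x_kx_{k+1}$, and via the single relation $u_k=x_{k+1}^{-2}u_{k+1}^2$ each such expression reduces to a difference of two equal monomials in $x_{k+1}$ and $u_{k+1}$, hence to $0$. The main obstacle is the bookkeeping here, specifically the exceptional fields $X_1,X_2,X_3,X_n$, whose coefficients $4,-4,-2,-1$ depart from the generic pattern. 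I expect these values to be exactly what preserves the cancellation: in $[X_1,X_2](x_1)$ the coefficient $4$ is precisely what matches $u_1=x_2^{-2}u_2^2$, in $[X_2,X_3]$ the coefficients $-4,-2$ play the same role, and the extra $\partial/\partial x_1$ term of $X_3$ contributes nothing to brackets with $X_j$ ($j\ge4$) because $u_j$ and the surviving monomials are independent of $x_1$. Each of these finitely many boundary commutators is checked by the same two-term cancellation as in the generic adjacent case, so no idea beyond the recursion $u_i=x_{i+1}^{-2}u_{i+1}^2$ is needed.
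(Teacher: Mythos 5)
Your proposal is correct, and for the key assertion (1) it takes a genuinely different route from the paper. The paper disposes of (2)--(6) as direct calculations and then proves (1) by a change of ``coordinates'': it observes that a formal vector field annihilating the $n$ functions $x_{1}, u_{1}, \hdots, u_{n-1}$ annihilates $x_{1},\hdots,x_{n}$ and hence vanishes, so it suffices to check $[X_{k},X_{j}]$ on these functions; the evaluations on the $u_{l}$ then come for free from (2)--(5) (each $X_{j}(u_{l})$ is either $0$, a constant, or $X_{l+1}(u_{l})$, which every $X_{j}$ sends to $0$ or to the constant $2$), and only $[X_{k},X_{j}](x_{1})=0$ needs a direct check. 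You instead verify $[X_{k},X_{j}](x_{l})=0$ for all coordinates $x_{l}$, which forces you through the two-term cancellations via $u_{k}=x_{k+1}^{-2}u_{k+1}^{2}$ in every adjacent and exceptional case; I checked several of these ($[X_{1},X_{2}]$, $[X_{2},X_{3}]$, $[X_{k},X_{k+1}]$ generic, $[X_{3},X_{4}](x_{1})$) and they do cancel exactly as you predict, so your plan goes through, but the paper's choice of test functions eliminates almost all of this bookkeeping. One slip worth fixing: your claim that for $j\geq k+2$ the variable sets of $X_{k}$ and $X_{j}$ are disjoint fails for $(k,j)=(1,3)$, since $X_{3}$ acts on $x_{1}$ and $x_{2}$; that bracket is not covered by your ``only $X_{j}(u_{k}^{-1})$ could survive'' argument and must be checked by the same two-term cancellation as the adjacent cases (it does vanish). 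Your derivation of (5) by differentiating the identity $(X_{i+1}(u_{i}))^{2}=4u_{i}$ of (6) and invoking (3) is a nice economy not present in the paper, which treats (5) as a direct computation.
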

\begin{proof}
The proof of all the properties besides (1)
is obtained by doing straightforward calculations.

Let us prove $[X_{k},X_{j}] \equiv 0$
for all $1 \leq j,k \leq n$. A formal vector field vanishing in
$x_{1}$, $u_{1}$, $\hdots$, $u_{n-1}$ vanishes
in $x_{1}$, $\hdots$, $x_{n}$ and then it is identically $0$.
Thus it suffices to prove that $[X_{k},X_{j}](x_{1}) = 0$
and $[X_{k},X_{j}](u_{l}) = 0$ for any $1 \leq l \leq n-1$.
The equality $[X_{k},X_{j}](x_{1}) = 0$ can be checked out
directly for all $1 \leq j,k \leq n$.
Property (2) implies  $[X_{k},X_{j}](u_{n-1}) = 0$
for all $1 \leq j,k \leq n$.
Moreover we have $[X_{k},X_{j}](u_{l}) = 0$
for all $1 \leq j,k \leq n$ and $1 \leq l < n-1$ by properties (3), (4) and (5).
\end{proof}

We denote
\[
Z_1 = u_1X_1, \ldots, Z_{n - 1} = u_{n - 1}X_{n - 1}, Z_{n}=X_n.
\]
We define ${\mathfrak g}$ the complex Lie algebra generated by
$\{Z_{1},\hdots,Z_{n}\}$.
We say that an element $Y$ of ${\mathfrak g}$ is a monomial of degree $1$ if
$Y \in \{Z_{1},\hdots,Z_{n}\}$. A monomial $Y_{k_{1},\hdots,k_{j}}$
of degree $j$ for $(k_{1},\hdots,k_{j}) \in {\{1,\hdots,n\}}^{j}$
is obtained by defining
\[ Y_{k_{1}}= Z_{k_{1}}, \ Y_{k_{1}, k_{2}} = [Z_{k_{2}}, Y_{k_{1}}],
\hdots, Y_{k_{1},\hdots,k_{j}} = [Z_{k_{j}}, Y_{k_{1},\hdots,k_{j-1}}] . \]
\begin{defi}
We say that a monomial $Y_{k_{1},\hdots,k_{j}}$
is a good monomial if
$k_{1} = \min (k_{1},\hdots,k_{j})$.
\end{defi}
The complex vector space generated by the monomials of degree $j$ is also generated by
the good monomials of degree $j$.
This is a consequence of the Jacobi's identity for  Lie brackets.
\begin{rem}
\label{rem:setup}
The Lie algebra ${\mathfrak g}$ is the complex
vector space generated by all good monomials. Moreover
${\mathcal C}^{j} {\mathfrak g}$ is the vector space generated by all good monomials of
degree greater or equal than $j+1$ for any $j \in {\mathbb N} \cup \{0\}$.
\end{rem}
\begin{pro}
\label{exemplo nilp}
The Lie algebra ${\mathfrak g}$ is nilpotent with $l({\mathfrak g})=n$ and
nilpotency class $3 \cdot  2^{n - 2} - 1$.
\end{pro}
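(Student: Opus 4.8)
The plan is to read everything off Remark~\ref{rem:setup}, which identifies $\mathcal{C}^{j}\mathfrak{g}$ with the span of the good monomials of degree $\geq j+1$, so that the nilpotency class is exactly the largest degree of a nonzero good monomial. First I would record the commutation rules of the generators. Expanding $[Z_{b},Z_{a}]=u_{b}X_{b}(u_{a})X_{a}-u_{a}X_{a}(u_{b})X_{b}$ and using $[X_{a},X_{b}]=0$ together with the fact that $X_{b}(u_{a})\neq 0$ only for $b=a+1$ (Lemma~\ref{prop. exemplo}(1)--(4)), one gets $[Z_{b},Z_{a}]=0$ for $|a-b|\geq 2$ and $[Z_{a+1},Z_{a}]=u_{a+1}X_{a+1}(u_{a})X_{a}$, which reduces to $-2x_{a+1}Z_{a}$ for $a\leq n-2$ and to $-x_{n}Z_{n-1}$ for $a=n-1$. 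In particular every nonzero bracket of generators is a meromorphic multiple of the generator of smaller index, and an easy induction then shows that a good monomial $Y_{k_{1},\dots,k_{j}}$ (with $k_{1}=\min$) equals $g\,Z_{k_{1}}$ for some function $g$: bracketing $g\,Z_{k_{1}}$ with $Z_{m}$, $m\geq k_{1}$, contributes $Z_{m}(g)\,Z_{k_{1}}+g\,[Z_{m},Z_{k_{1}}]$, and $[Z_{m},Z_{k_{1}}]$ is either $0$ or a multiple of $Z_{k_{1}}$.

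Next I would make the action on the coefficients completely explicit. Because the $X_{a}$ commute and are generically independent, I can rectify them to $X_{a}=\partial/\partial v_{a}$; then $u_{a}$ depends only on $v_{a+1}$, the relation $(X_{a+1}(u_{a}))^{2}=4u_{a}$ of Lemma~\ref{prop. exemplo}(6) forces $u_{a}=v_{a+1}^{2}$ for $a\leq n-2$, while $X_{n}(u_{n-1})=-1$ forces $u_{n-1}$ linear. Thus, up to normalization, $Z_{a}=v_{a+1}^{2}\,\partial/\partial v_{a}$ for $1\leq a\leq n-2$, $Z_{n-1}=v_{n}\,\partial/\partial v_{n-1}$ and $Z_{n}=\partial/\partial v_{n}$. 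In these coordinates a good monomial of minimal index $k_{1}$ is a multiple of $\partial/\partial v_{k_{1}}$, and since $[Z_{m},\partial/\partial v_{k_{1}}]=0$ for every $m\geq k_{1}$, bracketing acts on its coefficient purely by differentiation. As the coefficient starts from the coefficient of $Z_{k_{1}}$ (namely $v_{k_{1}+1}^{2}$, or $v_{n}$ at the top of the chain) and no $Z_{m}$ with $m\geq k_{1}$ introduces the variable $v_{k_{1}}$, the operator $Z_{k_{1}}$ annihilates it; hence the nonzero good monomials of minimal index $k_{1}$ are exactly $W(v_{k_{1}+1}^{2})\,\partial/\partial v_{k_{1}}$ with $W$ a word in $Z_{k_{1}+1},\dots,Z_{n}$.

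This turns the nilpotency class into a counting problem. On monomials the operator $Z_{a}$ ($a\leq n-2$) lowers the exponent of $v_{a}$ by one and raises that of $v_{a+1}$ by two, $Z_{n-1}$ trades one $v_{n-1}$ for one $v_{n}$, and $Z_{n}$ lowers the exponent of $v_{n}$; each operator strictly decreases a fixed monomial order, so all sufficiently long words vanish and $\mathfrak{g}$ is nilpotent. Letting $S(a,m)$ be the length of the longest word in $Z_{a},\dots,Z_{n}$ that does not annihilate $v_{a}^{m}$, the moves give $S(n,m)=m$, $S(n-1,m)=2m$ and $S(a,m)=m+S(a+1,2m)$ for $a\leq n-2$, whence $S(a,m)=(3\cdot 2^{\,n-a-1}-1)\,m$ for $a\leq n-1$. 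The largest degree of a nonzero good monomial is attained at $k_{1}=1$ and equals $1+S(2,2)=1+(3\cdot 2^{\,n-2}-2)=3\cdot 2^{\,n-2}-1$, which is therefore the nilpotency class; the extremal word that repeatedly fills and drains each variable down the chain exhibits a concrete monomial of this degree.

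For the soluble length I would combine the class estimate with the $\kappa$-function analysis of Section~\ref{sec:sollen}. The standard inclusion $\mathfrak{g}^{(k)}\subseteq\mathcal{C}^{2^{k}-1}\mathfrak{g}$ together with $3\cdot 2^{\,n-2}-1\leq 2^{n}-1$ gives $\mathfrak{g}^{(n)}=0$, so $l(\mathfrak{g})\leq n$. For the reverse inequality I would show $\kappa(p)=n-p$, i.e. that $\overline{\mathfrak{g}}^{(p)}\otimes\hat{K}_{n}=\langle\partial/\partial v_{1},\dots,\partial/\partial v_{n-p}\rangle$: the degree-$2$ monomials $[Z_{a+1},Z_{a}]$ ($1\leq a\leq n-1$) already give $\kappa(1)=n-1$, and inductively each derived step removes exactly the top direction $\partial/\partial v_{n-p}$ (its coefficient space having been reduced so that Lemma~\ref{lem:eigen} and the argument of Proposition~\ref{pro:intd} force it out of the next commutator) while the lower directions persist, yielding $\kappa(n-1)=1\neq 0$ and hence $l(\mathfrak{g})=n$. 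The main obstacle is precisely this last quantitative point: proving that $\kappa$ drops by exactly one at every stage — equivalently that the top generator direction both survives in $\overline{\mathfrak{g}}^{(p)}$ and is annihilated in $\overline{\mathfrak{g}}^{(p+1)}$ — which, like the draining recursion for the class, rests on the explicit coefficient bookkeeping of the model rather than on the generic rank count alone.
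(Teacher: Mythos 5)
Your computation of the nilpotency class is correct and is essentially the paper's argument in different clothing: your recursion $S(a,m)=m+S(a+1,2m)$ is the recursion $a(u_{n-k})\le 2+2\,a(u_{n-k+1})$ for the function $a$ of Lemma~\ref{lem:a}, and your normal form $Z_a=v_{a+1}^2\,\partial/\partial v_a$, $Z_{n-1}=v_n\,\partial/\partial v_{n-1}$, $Z_n=\partial/\partial v_n$ is a legitimate (if informally justified --- the $X_a$ are meromorphic, so the rectification lives at a generic point; in any case only properties (2)--(6) of Lemma~\ref{prop. exemplo} are actually used) repackaging of those identities. Two remarks: to get the recursion for $S$ as an \emph{equality} you must rule out that interleaving the operators does better, which follows from the weight $w_n=1$, $w_{n-1}=2$, $w_a=1+2w_{a+1}$ under which every non-annihilating application of a $Z_b$ drops the total weight by exactly one. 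Your deduction $l({\mathfrak g})\le n$ from the class via ${\mathfrak g}^{(k)}\subseteq{\mathcal C}^{2^k-1}{\mathfrak g}$ and $3\cdot 2^{n-2}-1\le 2^n-1$ is correct and is a genuinely different, slicker route than the paper's, which instead derives ${\mathfrak g}^{(j)}\subseteq{\mathcal M}_1X_1+\cdots+{\mathcal M}_{n-j}X_{n-j}$ from Lemma~\ref{lem:fi}.

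The genuine gap is the lower bound $l({\mathfrak g})\ge n$, i.e.\ ${\mathfrak g}^{(n-1)}\neq 0$. You state the plan ($\kappa(p)=n-p$, each derived step removing exactly the top direction while the lower ones persist), but you yourself flag the persistence of the lower directions as ``the main obstacle'' and do not prove it; Lemma~\ref{lem:eigen} and Proposition~\ref{pro:intd}, which you invoke, only control $\kappa$ from above and cannot show that a given direction survives into the next derived subalgebra. As written you have therefore only proved $l({\mathfrak g})\le n$. The paper closes this by an explicit chain: property (4) of Lemma~\ref{prop. exemplo} gives $X_{k+1}^2(u_k)=2$, so if $X_{k+1}$ and $X_{k+1}(u_k)X_k$ both lie in ${\mathfrak g}^{(n-k-1)}$ then $[X_{k+1},X_{k+1}(u_k)X_k]=2X_k\in{\mathfrak g}^{(n-k)}$; starting from $\{X_{n-1},X_{n-1}(u_{n-2})X_{n-2},\dots\}\subseteq{\mathfrak g}^{(1)}$ this yields $0\neq X_1\in{\mathfrak g}^{(n-1)}$. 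In your coordinates the same chain is one line: $[Z_n,Z_{n-1}]$ is a nonzero multiple of $\partial/\partial v_{n-1}$, and bracketing the two elements $\partial/\partial v_{k+1}$ and $v_{k+1}\,\partial/\partial v_k$ of ${\mathfrak g}^{(n-k-1)}$ produces $\partial/\partial v_k\in{\mathfrak g}^{(n-k)}$. Adding this short verification would complete your proof.
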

The value of the nilpotency class of ${\mathfrak g}$ is not surprising.
We have ${\mathcal G}^{(j)} \subset {\mathcal C}^{2^{j}} {\mathcal G}$
for all Lie algebra ${\mathcal G}$ and $j \in {\mathbb N}$.
Thus a Lie algebra of soluble length $n$ is of nilpotent class at least
$2^{n-1}+1$. This sequence is of the same order than $3 \cdot  2^{n - 2} - 1$.

Denote
${\mathcal M}_{j}= \{ v \in \hat{K}_{n} : X_{1}(v)= \hdots = X_{j}(v) = 0 \}$
for $j$ in  $\{1,\hdots,n-1\}$.
\begin{lem}
\label{lem:fi}
Let $1 \leq j, k \leq n$. We have
$Z_{k}({\mathcal M}_{j}) \subset  {\mathcal M}_{j}$.  In particular
any element of ${\mathfrak g}$ of the form $v X_{k}$ with
$v \in \hat{K}_{n}$ satisfies $v \in {\mathcal M}_{k}$.
\end{lem}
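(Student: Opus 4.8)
The plan is to establish the two assertions separately: the invariance $Z_{k}(\mathcal{M}_{j}) \subset \mathcal{M}_{j}$ by a direct computation powered by Lemma \ref{prop. exemplo}, and then the ``in particular'' clause by an induction on the degree of good monomials in which the invariance drives the inductive step. Throughout I write $Z_{k} = w_{k} X_{k}$, where $w_{k} = u_{k}$ for $k < n$ and $w_{n} = 1$, and I use the selection rule $X_{i}(u_{m}) = 0$ for $i \neq m+1$ (the vanishing part of Lemma \ref{prop. exemplo}(2)--(3)) together with the commutativity $[X_{i}, X_{k}] = 0$ (Lemma \ref{prop. exemplo}(1)).

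For the first assertion, fix $v \in \mathcal{M}_{j}$ and $1 \le i \le j$, and expand
\[ X_{i}(Z_{k}(v)) = X_{i}(w_{k})\, X_{k}(v) + w_{k}\, X_{i}(X_{k}(v)). \]
The second summand vanishes because the $X_{\bullet}$ commute, so $X_{i}(X_{k}(v)) = X_{k}(X_{i}(v)) = X_{k}(0) = 0$. For the first summand, if $i \neq k+1$ then $X_{i}(w_{k}) = 0$ by the selection rule; and if $i = k+1 \le j$ then $k < j$, whence $X_{k}(v) = 0$ directly from the definition of $\mathcal{M}_{j}$. Either way the product is $0$, so $X_{i}(Z_{k}(v)) = 0$ for all $i \le j$ and $Z_{k}(v) \in \mathcal{M}_{j}$.

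For the second assertion I would prove by induction on $j$ the structural claim that every good monomial $Y_{k_{1},\dots,k_{j}}$ equals $v\, X_{k_{1}}$ for some $v \in \mathcal{M}_{k_{1}}$, where $k_{1} = \min(k_{1},\dots,k_{j})$. The base case is $Y_{k_{1}} = Z_{k_{1}} = w_{k_{1}} X_{k_{1}}$ with $w_{k_{1}} \in \mathcal{M}_{k_{1}}$ (again by the selection rule). For the step, $Y_{k_{1},\dots,k_{j}} = [Z_{k_{j}}, v X_{k_{1}}]$ with $k_{j} \ge k_{1}$, and using $[X_{k_{j}}, X_{k_{1}}] = 0$,
\[ [Z_{k_{j}}, v X_{k_{1}}] = w_{k_{j}}\, X_{k_{j}}(v)\, X_{k_{1}} - v\, X_{k_{1}}(w_{k_{j}})\, X_{k_{j}}. \]
If $k_{j} > k_{1}$, then $X_{k_{1}}(w_{k_{j}}) = 0$ (as $k_{1} \neq k_{j}+1$), leaving $Z_{k_{j}}(v)\, X_{k_{1}}$, and $Z_{k_{j}}(v) \in \mathcal{M}_{k_{1}}$ by the first assertion; if $k_{j} = k_{1}$, then both $X_{k_{1}}(v) = 0$ and $X_{k_{1}}(w_{k_{1}}) = 0$, so the monomial vanishes. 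This closes the induction. Since $\mathfrak{g}$ is the $\mathbb{C}$-span of good monomials (Remark \ref{rem:setup}) and each $\mathcal{M}_{i}$ is a subfield of $\hat{K}_{n}$, every element of $\mathfrak{g}$ thus has the form $\sum_{i} V_{i} X_{i}$ with $V_{i} \in \mathcal{M}_{i}$; as $X_{1},\dots,X_{n}$ are linearly independent over $\hat{K}_{n}$ (their coefficient matrix against $\partial/\partial x_{1},\dots,\partial/\partial x_{n}$ is triangular with nonzero diagonal), matching this against $v X_{k}$ forces $v = V_{k} \in \mathcal{M}_{k}$.

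The step I expect to be most delicate is the index bookkeeping in the induction: everything hinges on the good-monomial condition $k_{j} \ge k_{1}$, which is precisely what kills the unwanted term $v\, X_{k_{1}}(w_{k_{j}})\, X_{k_{j}}$; without it the bracket would spill onto a second generator $X_{k_{j}}$ and the clean normal form $v X_{k_{1}}$ would break. Minor points to verify are the $\hat{K}_{n}$-independence of the $X_{i}$ (needed to match coefficients) and the boundary index $k_{1} = n$, where $w_{n} = 1$ and $\mathcal{M}_{n}$ is read as the field of constants.
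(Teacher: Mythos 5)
Your proof is correct and follows essentially the same route as the paper's: the invariance of ${\mathcal M}_{j}$ rests on the commutativity $[X_{i},X_{k}]=0$ together with the selection rule $X_{i}(u_{m})=0$ for $i\neq m+1$, and the ``in particular'' clause is the paper's induction on good monomials (its equation for $Y_{k_{1},\hdots,k_{p}}$), where the condition $k_{1}=\min(k_{1},\hdots,k_{j})$ kills the term $v\,X_{k_{1}}(w_{k_{j}})\,X_{k_{j}}$ exactly as you describe. Your explicit verification of the $\hat{K}_{n}$-linear independence of $X_{1},\hdots,X_{n}$ is a detail the paper leaves implicit, but it does not change the argument.
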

\begin{proof}
We have
\[ (X_{k} \circ X_{l}) (v) = (X_{l} \circ X_{k} +[X_{k},X_{l}])(v) = X_{l} (X_{k}(v)) \]
for all $v \in \hat{K}_{n}$, $1 \leq k,l \leq n$.
We obtain $X_{k}({\mathcal M}_{j}) \subset  {\mathcal M}_{j}$
for all $1 \leq j,k \leq n$. Since $u_{k} \in {\mathcal M}_{j}$ for any $k \geq j$ then
$Z_{k}({\mathcal M}_{j}) \subset  {\mathcal M}_{j}$
for all $1 \leq j,k \leq n$.

A good monomial  $Y_{k_{1},\hdots,k_{j}}$ satisfies
\begin{equation}
\label{equ:red}
 Y_{k_{1},\hdots,k_{p}} =
 [Z_{k_{p}}, Y_{k_{1},\hdots,k_{p-1}}]= (Z_{k_{p}} \circ \hdots \circ Z_{k_{2}}) (u_{k_{1}}) X_{k_{1}}
 \end{equation}
for any $2 \leq p \leq j$.
Since $u_{k_{1}} \in {\mathcal M}_{k_{1}}$ then
$Y_{k_{1},\hdots,k_{j}}$ is of the form $v X_{k_{1}}$
for some $v \in {\mathcal M}_{k_{1}}$. The general result is
a consequence of remark \ref{rem:setup}.
%
\end{proof}
\begin{defi}
Let $v \neq 0$ be an element of $\hat{K}_{n}$.
We define $a(v)$ as the maximum of $j \in {\mathbb N} \cup \{0\}$ such that there exists
$(k_{1},\hdots,k_{j}) \in {\{1,\hdots,n\}}^{j}$ with
$(Z_{k_{j}} \circ \hdots \circ Z_{k_{1}}) (v) \neq 0$.
If the maximum does not exist we denote $a(v) = \infty$.
\end{defi}
\begin{lem}
\label{lem:length}
The nilpotency class of ${\mathfrak g}$ is
$\max (a(u_{1}), \hdots, a(u_{n-1})) + 1$.
\end{lem}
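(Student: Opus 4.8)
The plan is to identify the nilpotency class with the maximal degree of a nonzero good monomial, and then to match that degree against $\max(a(u_1),\hdots,a(u_{n-1}))+1$ by controlling which generators $Z_k$ can appear in a nonvanishing iterated bracket. First I would record the reduction furnished by Remark \ref{rem:setup}: since ${\mathcal C}^{j}{\mathfrak g}$ is the complex vector space generated by the good monomials of degree $\geq j+1$, the condition ${\mathcal C}^{c}{\mathfrak g}=0$ is equivalent to the vanishing of every good monomial of degree $\geq c+1$. Hence the nilpotency class equals the largest integer $D$ for which some good monomial of degree $D$ is nonzero (with $D=\infty$ if no such maximum exists). By equation (\ref{equ:red}) a good monomial of degree $p$ with smallest index $k_{1}$ has the form $(Z_{k_{p}}\circ\hdots\circ Z_{k_{2}})(u_{k_{1}})X_{k_{1}}$, so it is nonzero precisely when $(Z_{k_{p}}\circ\hdots\circ Z_{k_{2}})(u_{k_{1}})\neq 0$, where $k_{2},\hdots,k_{p}\geq k_{1}$.

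The easy half is $D\leq\max_{m}a(u_{m})+1$: a nonzero good monomial of degree $p$ exhibits a nonvanishing chain $(Z_{k_{p}}\circ\hdots\circ Z_{k_{2}})(u_{k_{1}})$ of $p-1$ operators applied to $u_{k_{1}}$, so $a(u_{k_{1}})\geq p-1$ by definition, giving $p\leq a(u_{k_{1}})+1\leq\max_{m}a(u_{m})+1$.

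The reverse inequality is the substantive point, and it amounts to showing that the maximum defining $a(u_{m})$ is realized by a chain all of whose indices exceed $m$, so that prefixing the index $m$ produces a genuine good monomial. The key observation is threefold: $u_{m}\in{\mathcal M}_{m}$ (by parts (2) and (3) of Lemma \ref{prop. exemplo} one has $X_{1}(u_{m})=\hdots=X_{m}(u_{m})=0$); $Z_{k}({\mathcal M}_{m})\subset{\mathcal M}_{m}$ for every $k$ by Lemma \ref{lem:fi}; and crucially $Z_{l}$ annihilates ${\mathcal M}_{m}$ whenever $l\leq m$, since $Z_{l}=u_{l}X_{l}$ and $X_{l}$ kills ${\mathcal M}_{m}$ by definition. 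Consequently, along any chain $(Z_{k_{j}}\circ\hdots\circ Z_{k_{1}})(u_{m})$ every partial result lies in ${\mathcal M}_{m}$, so the first occurrence of an index $k_{i}\leq m$ forces the whole expression to vanish; thus every nonzero chain uses only indices $\geq m+1$. Taking a chain of length $j=a(u_{m})$ that realizes the maximum, the monomial $Y_{m,k_{1},\hdots,k_{j}}$ then satisfies $m=\min(m,k_{1},\hdots,k_{j})$, is therefore good, and is nonzero with value $(Z_{k_{j}}\circ\hdots\circ Z_{k_{1}})(u_{m})X_{m}$. This produces a nonzero good monomial of degree $a(u_{m})+1$, whence $D\geq\max_{m}a(u_{m})+1$. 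Combining the two inequalities yields $D=\max_{m}a(u_{m})+1$, which is the asserted value of the nilpotency class.

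I expect the bookkeeping — verifying $u_{m}\in{\mathcal M}_{m}$ and unwinding equation (\ref{equ:red}) — to be immediate, while the one genuinely load-bearing step is the invariance-plus-annihilation argument that pins down the admissible indices to be those $>m$; everything else is a matter of translating faithfully between chains of operators and good monomials.
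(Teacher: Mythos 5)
Your argument is correct and is essentially the paper's own proof written out in full: the paper likewise reduces via Remark \ref{rem:setup} and equation (\ref{equ:red}) to comparing degrees of nonzero good monomials with lengths of nonvanishing chains, and its one substantive step is exactly your observation that $(Z_{k_l}\circ\hdots\circ Z_{k_1})(u_j)\neq 0$ forces all indices to exceed $j$, deduced from lemma \ref{lem:fi}. Your write-up merely makes explicit the two inequalities that the paper leaves implicit.
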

\begin{proof}
The equation
$(Z_{k_{l}} \circ \hdots \circ Z_{k_{1}}) (u_{j}) \neq 0$
implies $j < \min (k_{1}, \hdots,k_{l})$ by lemma \ref{lem:fi}.
Remark \ref{rem:setup} and equation (\ref{equ:red}) imply
that the nilpotency class of ${\mathfrak g}$ is   equal to
$\max (a(u_{1}), \hdots, a(u_{n-1})) + 1$.
\end{proof}
\begin{lem}
\label{lem:a}
The function $a$ satisfies
\begin{itemize}
\item
$a(uv)\leq a(u) + a(v) \ \forall u,v \in \hat{K}_{n}$.
\item
$a(u_{n - k}) = 3 \cdot 2^{k - 1} - 2 \ \forall 1 \leq k \leq n-1$.
\end{itemize}
\end{lem}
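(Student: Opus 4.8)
The first inequality follows from the generalized Leibniz rule. Each $Z_k$ is a derivation of $\hat{K}_{n}$, so for a word $D=Z_{k_j}\circ\cdots\circ Z_{k_1}$ and $u,v\in\hat{K}_{n}$, expanding $D(uv)$ by applying Leibniz once per letter produces a sum over all ways of assigning each letter $Z_{k_i}$ to either $u$ or $v$; the corresponding term is $(Z_{k_{i_s}}\circ\cdots\circ Z_{k_{i_1}})(u)\cdot(Z_{k_{i'_t}}\circ\cdots\circ Z_{k_{i'_1}})(v)$, where $i_1<\cdots<i_s$ is the subsequence sent to $u$ and the complementary subsequence is sent to $v$. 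If $j>a(u)+a(v)$, then in every term either more than $a(u)$ letters act on $u$ or more than $a(v)$ act on $v$, so the term vanishes; hence $D(uv)=0$ for every word of length $j$, which gives $a(uv)\le a(u)+a(v)$.

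For the second identity the decisive point is that, once restricted to the functions $u_{n-k}$, the derivations $Z_1,\dots,Z_n$ act \emph{without cancellation}. First I would record the polynomial form $Z_m=u_mX_m=-2x_{m-1}x_m\,\partial_{x_{m-1}}+x_m^2\,\partial_{x_m}$ (with the evident modifications for $m\le 3$ and $Z_n=X_n$): the factor $u_m$ cancels, so each $Z_m$ is a homogeneous quadratic vector field raising total degree by one. Since $u_{n-k}$ is a monomial in $x_{n-k+1},\dots,x_n$ (of total degree $-(3\cdot 2^{k-1}-2)$, by the degree recursion coming from $u_{n-k}=x_{n-k+1}^{-2}u_{n-k+1}^2$), I would check: (i) $Z_m$ annihilates every monomial in $x_{n-k+1},\dots,x_n$ when $m\le n-k$, because its component derivations involve only variables of index $\le n-k$; and (ii) each of $Z_{n-k+1},\dots,Z_n$ sends such a monomial to a scalar multiple of a single monomial in the same variables. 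Writing a monomial by its exponent vector $(c_1,\dots,c_k)$, with $c_i$ the exponent of $x_{n-k+i}$, the operator $Z_{n-k+1}$ increments $c_1$ with coefficient $c_1$, the operator $Z_{n-k+i}$ ($2\le i\le k-1$) increments $c_i$ with coefficient $c_i-2c_{i-1}$, and $Z_n$ increments $c_k$ with coefficient $c_k-c_{k-1}$. Consequently any iterated derivative $(Z_{k_j}\circ\cdots\circ Z_{k_1})(u_{n-k})$ is a single monomial, nonzero exactly when each coefficient (its \emph{gate}) encountered along the way is nonzero. Thus $a(u_{n-k})$ is the length of the longest legal sequence of increments starting from the exponent vector of $u_{n-k}$.

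The problem is now purely combinatorial, and here is the main obstacle: a priori such chains can be infinite (indeed $a(x_n)=\infty$ since $Z_n(x_n^b)=b\,x_n^{b+1}$ never vanishes), so both finiteness and the exact value must be extracted from the very special initial exponent vector. I would pass to the gate coordinates $d_1=c_1$, $d_i=c_i-2c_{i-1}$ for $2\le i\le k-1$, and $d_k=c_k-c_{k-1}$, in which the gate for incrementing $c_i$ is simply $d_i\neq 0$. A direct computation (using the balance between the two top exponents and the doubling of the others) shows that the exponent vector of $u_{n-k}$ has gate vector $d^{0}=(-2,0,\dots,0)$. In these coordinates, incrementing $c_i$ replaces $d_i$ by $d_i+1$ and decreases the next gate ($d_{i+1}$ by $2$ for $i\le k-2$, and $d_k$ by $1$ for $i=k-1$), leaving all other gates fixed. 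From $d^{0}$ one proves by induction the invariant $d_i\le 0$ for every $i$: a gate $d_i$ is raised only by the move incrementing $c_i$, which is legal only when $d_i\neq 0$, hence (being $\le 0$) only when $d_i\le -1$, so it stays $\le 0$; and $d_i$ is otherwise only decreased. Since the total degree $\sum_i c_i$ is a fixed positive combination of the $d_i$, the invariant forces $\sum_i c_i\le 0$ throughout. As each increment raises the total degree by one and the start is $\sum_i c_i=-(3\cdot 2^{k-1}-2)$, every legal chain has length at most $3\cdot 2^{k-1}-2$; in particular $a(u_{n-k})$ is finite.

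For the matching lower bound I would run the natural cascade on the gates: fire $Z_{n-k+1}$ twice to bring $d_1$ from $-2$ up to $0$ (this drives $d_2$ down to $-4$), then fire $Z_{n-k+2}$ four times to return $d_2$ to $0$ (driving $d_3$ to $-8$), and so on, firing $Z_{n-k+i}$ exactly $2^i$ times for $1\le i\le k-1$, and finally $Z_n$ exactly $2^{k-1}$ times to restore $d_k$ to $0$. Every firing is legal because each gate is raised only from a strictly negative value, and the chain ends at the zero exponent vector. Its length is $(2+4+\cdots+2^{k-1})+2^{k-1}=3\cdot 2^{k-1}-2$, so $a(u_{n-k})\ge 3\cdot 2^{k-1}-2$. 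Combining the two bounds yields $a(u_{n-k})=3\cdot 2^{k-1}-2$ for $k\ge 2$, the case $k=1$ being the immediate computation $a(x_n^{-1})=1$.
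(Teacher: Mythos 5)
Your proof of the first bullet is the same Leibniz argument the paper uses. For the second bullet you take a genuinely different route. The paper proves the upper bound by induction on $k$ through the recursion $a(u_{n-k}) \le 1 + a(u_{n-k+1}) + a(X_{n-k+1}(u_{n-k})) \le 2+2a(u_{n-k+1})$, which relies on the identities (2)--(6) of lemma \ref{prop. exemplo} together with the submultiplicativity $a(uv)\le a(u)+a(v)$ of the first bullet; its lower bound is the explicit chain $Z_n^{2^{k-1}}\circ Z_{n-1}^{2^{k-1}}\circ\cdots\circ Z_{n-k+2}^{2^{2}}\circ Z_{n-k+1}^{2}$, whose nonvanishing is checked by a separate induction on $m$ via $X_{n-k+1}^{2m}(u_{n-k}^{m})\in{\mathbb C}^{*}$. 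You instead note that each $Z_m=u_mX_m$ is a polynomial quadratic vector field sending monomials in $x_{n-k+1},\hdots,x_n$ to scalar multiples of monomials, encode the action on exponent vectors, and derive the upper bound from a pure degree count: every legal step raises the total degree by one, the gate invariant $d_i\le 0$ forces the total degree to stay nonpositive, and $u_{n-k}$ has total degree $-(3\cdot 2^{k-1}-2)$. This makes the first bullet unnecessary for the second, explains where the number $3\cdot 2^{k-1}-2$ comes from, and makes the finiteness of $a(u_{n-k})$ (which is not automatic, as your observation $a(x_n)=\infty$ shows) completely transparent; your lower-bound cascade is exactly the paper's chain, rederived in gate coordinates. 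Both arguments are correct; the paper's is shorter because it reuses lemma \ref{prop. exemplo}, while yours is more self-contained and conceptually sharper. Your checks of the special forms of $Z_2$, $Z_3$ and of the coefficient $-1$ (rather than $-2$) in $Z_n$, reflected in the definition $d_k=c_k-c_{k-1}$, are exactly the points where such an argument could have gone wrong, and you handle them correctly.
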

\begin{proof}
The first result is a consequence of Leibnitz's formula.
More precisely $Z_{r_1} \circ \ldots \circ Z_{r_l}(uv)$ is a sum of elements of the form
\begin{equation}
\label{equ:leibnitz}
(Z_{r_{s_1}} \circ
\ldots \circ Z_{r_{s_j}}(u))(Z_{r_{s_{j + 1}}} \circ \ldots \circ Z_{r_{s_l}}(v))
\end{equation}
where
$s_{1} < \hdots < s_{j}$, $s_{j+1} < \hdots < s_{l}$ and
$\{s_1,\ldots,s_l\} = \{1,\ldots,l\}$.
If $l = a(u) + a(v) +1$ then either $j \geq a(u) +1$ or $l-j \geq a(v) +1$ and
all the terms of the form (\ref{equ:leibnitz}) are vanishing.
We deduce $a(uv) \leq a(u) + a(v)$.

We have $Z_{n}(u_{n-1})= -1$ and $Z_{j}(u_{n-1})=0$ for any $1 \leq j \leq n-1$
by property (2) in lemma \ref{prop. exemplo}.
We obtain $a(u_{n-1})=1=3 \cdot 2^{1- 1} - 2 $.
Let us prove the second result by induction on $k$.
Suppose the result is true for $k-1$.
We have $Z_{j}(u_{n-k})=0$ for any $j \neq n-k+1$ by property (3) in
 lemma \ref{prop. exemplo}. Since
$Z_{n - k + 1}(u_{n - k}) = u_{n - k + 1}X_{n - k + 1}(u_{n - k})$ we obtain
 \[
 a(u_{n - k}) =
 1 + a(Z_{n - k + 1}(u_{n - k})  ) \leq 1 + a(u_{n - k + 1}) + a(X_{n - k + 1}(u_{n - k})).
 \]
Now again
 $Z_{j}(X_{n - k + 1}(u_{n - k}))\neq 0$ implies $j = n-k+1$ by lemma \ref{prop. exemplo}.
 Since  $Z_{n - k + 1}(X_{n - k + 1}(u_{n - k})) = 2u_{n - k + 1}$ we obtain
 \[ a(X_{n - k + 1}(u_{n - k})) = 1 + a(2u_{n - k + 1}) \implies
 a(u_{n - k})  \leq 2 + 2 a(u_{n - k + 1})   . \]
 This leads us to
 $a(u_{n - k})\leq 2 + 2a(u_{n - (k - 1)}) = 3 \cdot 2^{k - 1} - 2$
 by induction hypothesis.

To prove the equality it suffices to show that
\begin{equation}
\label{equ:chain}
(Z_{n}^{2^{k - 1}} \circ  Z_{n - 1}^{2^{k - 1}} \circ Z_{n - 2}^{2^{ k - 2}} \circ \ldots \circ
Z_{n - k + 3}^{2^3} \circ Z_{n - k + 2}^{2^2} \circ Z_{n - k + 1}^2)(u_{n - k}) \in \mathbb{C}^{*}
\end{equation}
since it is a composition of  $3 \cdot 2^{k - 1} - 2$ derivations applied to $u_{n-k}$.
We claim that
$X_{n}^{m}(u_{n - 1}^m)\in\mathbb{C}^*$ and
$X_{n - k + 1}^{2m}(u_{n - k}^m)\in\mathbb{C}^*$, $k= 2,\ldots, n - 1$ and $m \in {\mathbb N}$.
The first property is a consequence of property (2) in lemma \ref{prop. exemplo}.
The second property is deduced of condition (4) in  lemma \ref{prop. exemplo}
for $m=1$. The equation
\[ X_{n - k + 1}^{2(m + 1)}(u_{n - k}^{m + 1}) = (X_{n - k + 1}^{2m } \circ X_{n - k + 1})
((m+1) u_{n - k}^{m} X_{n-k+1}(u_{n - k})) = \]
\[ X_{n - k + 1}^{2m}
(  (m+1) [m u_{n - k}^{m-1} X_{n-k+1}(u_{n - k})^{2} +
u_{n - k}^{m} X_{n-k+1}^{2}(u_{n - k})] )= \]
\[ X_{n - k + 1}^{2m}
(  (m+1) (4m+2) u_{n-k}^{m} )  \]
implies that it holds true for any $m \in {\mathbb N}$ by induction.
We obtain
$Z_{n}^{m}(u_{n - 1}^m)\in\mathbb{C}^*$ and
$Z_{n - k + 1}^{2m}(u_{n - k}^m)\in\mathbb{C}^{*} u_{n-k+1}^{2m}$ for
$k= 2,\ldots, n - 1$ and $m \in {\mathbb N}$.
The proof of equation (\ref{equ:chain}) is now straightforward.
\end{proof}
\begin{proof}[proof of prop. \ref{exemplo nilp}]
The Lie algebra ${\mathfrak g}$ is of nilpotent class
$3 \cdot 2^{n - 2} - 1$ by lemmas \ref{lem:length} and \ref{lem:a}.
We obtain
\[  {\mathfrak g}^{(j)} \subset {\mathcal M}_{1} X_{1} + \hdots +  {\mathcal M}_{n-j}  X_{n-j} \
\forall 0 \leq j \leq n-1 \]
and ${\mathfrak g}^{(n)} = \{0\}$ by lemma \ref{lem:fi}.
Moreover  since
\[
\{X_{n - 1},X_{n - 1}(u_{n - 2})X_{n - 2},\ldots,
 X_{n - 1}(u_{n - 2})X_{n - 2}(u_{n - 3})\ldots X_{2}(u_1)X_1\} \subset {\mathfrak g}^{(1)}
\]
\[
\{X_{n - 2},X_{n - 2}(u_{n - 3})X_{n - 3},\ldots,
 X_{n - 2}(u_{n - 3})\ldots X_{2}(u_1)X_1\} \subset {\mathfrak g}^{(2)}
\]
\[
\vdots
\]
\[
\{X_{2},X_{2}(u_{1})X_{1}\} \subset {\mathfrak g}^{(n - 2)}
\]
\[
\{X_{1}\} \subset  {\mathfrak g}^{(n - 1)}.
\]
we obtain $l({\mathfrak g}) = n$.
\end{proof}
\begin{pro}
Fix $n \in {\mathbb N}$.
The set $G = {\rm exp}({\mathfrak g})$ is a unipotent nilpotent subgroup of
$\diff{}{n}$ such that $l(G)=n$.
\end{pro}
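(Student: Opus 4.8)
The plan is to transfer each property of $G$ from the corresponding property of ${\mathfrak g}$ recorded in Proposition \ref{exemplo nilp}. First I would observe that the generators are explicit homogeneous quadratic (hence nilpotent) vector fields: $Z_{1}=x_{2}^{2}\,\partial/\partial x_{1}$, $Z_{k}=u_{k}X_{k}$ for $2\le k\le n-1$, and $Z_{n}=X_{n}$ are all polynomial, and so are their iterated brackets. Since Proposition \ref{exemplo nilp} provides a finite nilpotency class, only finitely many good monomials are nonzero, so ${\mathfrak g}$ is a finite-dimensional nilpotent Lie subalgebra of ${\mathcal X}_{N}\cn{n}\subset\hat{\mathcal X}_{N}\cn{n}$. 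By Proposition \ref{pro:clas} the exponential maps ${\mathfrak g}$ injectively into $\diffh{u}{n}$, so $G={\rm exp}({\mathfrak g})$ consists of unipotent elements.

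Next I would check that $G$ is a group. As ${\mathfrak g}$ is nilpotent the Baker--Campbell--Hausdorff series terminates and lands in ${\mathfrak g}$; combined with injectivity of ${\rm exp}$ this gives ${\rm exp}(X)\circ{\rm exp}(Y)={\rm exp}(H(X,Y))$ with $H(X,Y)\in{\mathfrak g}$ and ${\rm exp}(X)^{-1}={\rm exp}(-X)$, so $G$ is closed under composition and inversion. The same formula shows that $G$ equals the subgroup generated by the one-parameter groups $\{{\rm exp}(tZ_{i})\}_{t\in{\mathbb C},\,1\le i\le n}$: that subgroup is contained in the group ${\rm exp}({\mathfrak g})$, and conversely it contains the exponential of the whole Lie algebra generated by the $Z_{i}$, namely ${\mathfrak g}$.

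The main obstacle is convergence, that is $G\subset\diff{}{n}$, and I would settle it by integrating the generators by hand. For instance ${\rm exp}(tZ_{1})=(x_{1}+tx_{2}^{2},x_{2},\ldots,x_{n})$ is polynomial, while for $Z_{n}=-x_{n-1}x_{n}\,\partial/\partial x_{n-1}+x_{n}^{2}\,\partial/\partial x_{n}$ one solves $\dot x_{n}=x_{n}^{2}$ and $\dot x_{n-1}=-x_{n-1}x_{n}$ to obtain the rational map ${\rm exp}(tZ_{n})=(x_{1},\ldots,x_{n-2},\,x_{n-1}(1-tx_{n}),\,x_{n}/(1-tx_{n}))$; each remaining $Z_{k}$ integrates in the same triangular fashion to a rational diffeomorphism. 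Hence every ${\rm exp}(tZ_{i})$ is a germ in $\diff{}{n}$, and since germs of convergent diffeomorphisms are closed under composition and inversion, the group $G$ generated by these flows lies in $\diff{}{n}$. The triangular shape of the $Z_{i}$ (each using only $\partial/\partial x_{k-1}$, $\partial/\partial x_{k}$ with leading term $x_{k}^{2}\,\partial/\partial x_{k}$) is exactly what makes the flows rational, so I expect this explicit integration to be the only genuinely computational step.

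It remains to read off nilpotency and the value of the length. Since $G$ is unipotent, $\overline{G}^{(0)}$ is unipotent and connected by Lemma \ref{lem:elpro2}; moreover, as every $X\in{\mathfrak g}$ is nilpotent, each jet image $C_{k}$ is the exponential of the nilpotent matrix Lie algebra $\{X_{k}:X\in{\mathfrak g}\}$ and hence a Zariski closed (unipotent) subgroup, so $G_{k}=C_{k}$ and, using that ${\mathfrak g}$ is finite-dimensional, $\overline{G}^{(0)}=G$ with ${\mathfrak g}$ its Lie algebra in the sense of Proposition \ref{pro:lie}. Proposition \ref{pro:lieder} then identifies ${\rm exp}(\overline{\mathfrak g}^{(j)})$ with $\overline{G}^{(j)}$ and ${\rm exp}(\overline{\mathcal C}^{j}{\mathfrak g})$ with $\overline{\mathcal C}^{j}G$; since ${\rm exp}$ is bijective and the closure of $\{0\}$ is $\{0\}$, we get $G^{(j)}=\{Id\}$ iff ${\mathfrak g}^{(j)}=0$ and the analogous statement for the descending central series. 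Therefore $G$ is nilpotent and $l(G)=l({\mathfrak g})=n$ by Proposition \ref{exemplo nilp}.
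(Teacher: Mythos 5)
Your argument is correct, and its endgame (showing $C_{k}=G_{k}$, $\overline{G}^{(0)}=G$ with Lie algebra ${\mathfrak g}$, and then invoking Propositions \ref{pro:lieder} and \ref{exemplo nilp}) is exactly the paper's. Where you diverge is in the middle: the paper never integrates anything explicitly, but works entirely at the jet level, observing that each ${\mathfrak g}_{k}=\{X_{k}:X\in{\mathfrak g}\}$ is a nilpotent Lie algebra of nilpotent endomorphisms, hence (by the algebraicity of such Lie algebras and Lie's theorem) the Lie algebra of a unipotent algebraic group $A_{k}$ with ${\rm exp}:{\mathfrak g}_{k}\to A_{k}$ bijective; this simultaneously yields the group structure of $G$, its unipotence, and $G_{k}=C_{k}=A_{k}$. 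You instead get the group structure from the (terminating) Baker--Campbell--Hausdorff formula and settle convergence by integrating the generators to explicit rational flows. That works, but note that your identification of ${\rm exp}({\mathfrak g})$ with the group generated by the flows $\{{\rm exp}(tZ_{i})\}$ needs the standard fact that in a simply connected nilpotent Lie group the subgroup generated by one-parameter subgroups of Lie algebra generators is everything (BCH alone only gives the easy inclusion); you could bypass this entirely by remarking that every element of ${\mathfrak g}$ is itself a polynomial vector field vanishing to order two at the origin, so its exponential is the time-one flow of a convergent vector field and hence already lies in $\diff{}{n}$, no product decomposition required.
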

\begin{proof}
Since ${\mathfrak g}$ is nilpotent it is also a finite dimensional complex vector space.
Thus ${\mathfrak g}$ is closed in the Krull topology.

We define ${\mathfrak g}_{k} = \{ X_{k} : X \in {\mathfrak g} \}$ (see equation
(\ref{equ:actvf})) for $k \in {\mathbb N}$. It is a nilpotent Lie algebra of nilpotent elements.
Such a Lie algebra is always algebraic (pag. 105-108 and specially corollary 7.7
\cite{Borel}), i.e. ${\mathfrak g}_{k}$ is the Lie algebra of a linear algebraic group
$A_{k} \subset GL({\mathfrak m}/{\mathfrak m}^{k+1})$ for any $k \in {\mathbb N}$.
Up to a change of coordinates ${\mathfrak g}_{k}$ is a Lie algebra of upper
triangular matrices with vanishing principal diagonal (th. \ref{teo:Lie}).
Thus $A_{k}$ is composed of unipotent elements and
${\rm exp}: {\mathfrak g}_{k} \to A_{k}$ is a bijection for any $k \in {\mathbb N}$.
We deduce that $G =  {\rm exp}({\mathfrak g})$ is a unipotent subgroup of
$\diff{}{n}$. Indeed since ${\mathfrak g}_{1}= 0$ we have $j^{1} G = \{Id\}$.

Consider the notations in the paragraph preceeding definition \ref{def:liegrp}.
It is clear that $C_{k}=A_{k}$ and then $G_{k}=A_{k}$ since $A_{k}$ is algebraic
for any $k \in {\mathbb N}$.  Since ${\mathfrak g}$ is closed in the Krull topology then
so is $G$.  Thus we obtain that $G = \overline{G}^{(0)}$ and
${\mathfrak g}$ is the Lie algebra of the unipotent connected group $\overline{G}^{(0)}$.
This implies
$l(G) = l(\overline{G}^{(0)})=l({\mathfrak g}) = n$ by prop.
\ref{pro:lieder} and \ref{exemplo nilp}.
\end{proof}
\begin{rem}
The examples can be chosen to be finitely generated. Given a group
$G$ of finite length $p$ there exists a finitely generated subgroup
$\tilde{G}$ of $G$ such that $l(\tilde{G})=p$. Indeed
there exist $1 \neq \phi \in G^{(p)}$ and a finitely generated
group $\tilde{G} = <\phi_{1}, \hdots, \phi_{q} >$ such that
$\phi \in \tilde{G}^{(p)}$. We obtain
$p \leq l(\tilde{G}) \leq l(G) =p$.
\end{rem}
\bibliography{rendu}

\def\cprime{$'$}
\begin{thebibliography}{10}

\bibitem{Ah-Ro}
Patrick Ahern and Jean-Pierre Rosay.
\newblock Entire functions, in the classification of differentiable germs
  tangent to the identity, in one or two variables.
\newblock {\em Trans. Amer. Math. Soc.}, 347(2):543--572, 1995.

\bibitem{Borel}
Armand Borel.
\newblock {\em Linear algebraic groups}, volume 126 of {\em Graduate Texts in
  Mathematics}.
\newblock Springer-Verlag, New York, second edition, 1991.

\bibitem{Bourbaki.alg.47}
N.~Bourbaki.
\newblock {\em Algebra. {II}. {C}hapters 4--7}.
\newblock Elements of Mathematics (Berlin). Springer-Verlag, Berlin, 1990.
\newblock Translated from the French by P. M. Cohn and J. Howie.

\bibitem{Cantat-Cerveau}
Serge Cantat and Dominique Cerveau.
\newblock Analytic actions of mapping class groups on surfaces.
\newblock {\em J. Topol.}, 1(4):910--922, 2008.

\bibitem{Ecalle}
J.~{\'E}calle.
\newblock Th\'eorie it\'erative: introduction \`a la th\'eorie des invariants
  holomorphes.
\newblock {\em J. Math. Pures Appl. (9)}, 54:183--258, 1975.

\bibitem{Eisenbud}
David Eisenbud.
\newblock {\em Commutative algebra}, volume 150 of {\em Graduate Texts in
  Mathematics}.
\newblock Springer-Verlag, New York, 1995.
\newblock With a view toward algebraic geometry.

\bibitem{Ghys-identite}
{\'E}tienne Ghys.
\newblock Sur les groupes engendr\'es par des diff\'eomorphismes proches de
  l'identit\'e.
\newblock {\em Bol. Soc. Brasil. Mat. (N.S.)}, 24(2):137--178, 1993.

\bibitem{Humphreys}
James~E. Humphreys.
\newblock {\em Linear algebraic groups}.
\newblock Springer-Verlag, New York, fourth printing, revised edition, 1995.
\newblock Graduate Texts in Mathematics, No. 21.

\bibitem{Malcev}
A.~L. Mal{\cprime}cev.
\newblock On certain classes of infinite solvable groups.
\newblock {\em Amer. Math. Soc. Transl. (2)}, 2:1--21, 1956.

\bibitem{Martelo}
M.~Martelo.
\newblock {\em Grupos de germes de difeomorfismos em v\'{a}rias vari\'{a}veis e
  formas diferenciais}.
\newblock PhD thesis, UFRJ, 2010.
\newblock Im-UFRJ, biblioteca prof. Leopoldo Nachbin do Instituto de
  Matem‡tica, viii 49 f.

\bibitem{MaRa:aen}
J.~Martinet and J.-P. Ramis.
\newblock Classification analytique des \ex quations differentielles non lin\ex
  aires r\ex sonnantes du premier ordre.
\newblock {\em Ann. Sci. Ecole Norm. Sup.}, 4(16):571--621, 1983.

\bibitem{MaMo:Aen}
J.-F. Mattei and R.~Moussu.
\newblock Holonomie et int\'egrales premi\`eres.
\newblock {\em Ann. Sci. \'Ecole Norm. Sup. (4)}, 13(4):469--523, 1980.

\bibitem{Newman}
M.~F. Newman.
\newblock The soluble length of soluble linear groups.
\newblock {\em Math. Z.}, 126:59--70, 1972.

\bibitem{Paul:pre}
E.~Paul.
\newblock Feuilletages holomorphes singuliers \`a holonomie r\'esoluble.
\newblock {\em J. Reine Angew. Math.}, 514:9--70, 1999.

\bibitem{Serre.Lie}
Jean-Pierre Serre.
\newblock {\em Lie algebras and {L}ie groups}, volume 1500 of {\em Lecture
  Notes in Mathematics}.
\newblock Springer-Verlag, Berlin, second edition, 1992.
\newblock 1964 lectures given at Harvard University.

\bibitem{Suprunenko}
D.~A. Suprunenko.
\newblock {\em Matrix groups}.
\newblock American Mathematical Society, Providence, R.I., 1976.
\newblock Translated from the Russian, Translation edited by K. A. Hirsch,
  Translations of Mathematical Monographs, Vol. 45.

\bibitem{Wehrfritz}
B.~A.~F. Wehrfritz.
\newblock {\em Infinite linear groups. {A}n account of the group-theoretic
  properties of infinite groups of matrices}.
\newblock Springer-Verlag, New York, 1973.
\newblock Ergebnisse der Matematik und ihrer Grenzgebiete, Band 76.

\end{thebibliography}
\end{document}